\documentclass[matprg]{svjour3}
\usepackage{amsmath}
\usepackage{amsfonts}
\usepackage{color}
\usepackage{srcltx}


\usepackage{amsmath}
\usepackage{amsfonts}
\usepackage{latexsym}
\usepackage{xcolor}

\marginparwidth 0pt
\oddsidemargin  0pt
\evensidemargin  0pt
\marginparsep 0pt

\topmargin   -.5in
\hoffset -0.1in
\textwidth   6.6in
\textheight  8.5 in

\newtheorem{assumption}{Assumption}

\newcommand{\beq}{\begin{equation}}
\newcommand{\eeq}{\end{equation}}
\newcommand{\beqa}{\begin{eqnarray}}
\newcommand{\eeqa}{\end{eqnarray}}
\newcommand{\beqas}{\begin{eqnarray*}}
\newcommand{\eeqas}{\end{eqnarray*}}
\newcommand{\bi}{\begin{itemize}}
\newcommand{\ei}{\end{itemize}}
\newcommand{\ba}{\begin{array}}
\newcommand{\ea}{\end{array}}

\newcommand{\nn}{\nonumber}

\def\endproof{{\ \hfill\hbox{%
      \vrule width1.0ex height1.0ex
    }\parfillskip 0pt}\par}

\setcounter{page}{1}

\def\eqnok#1{(\ref{#1})}
\def\argmin{{\rm argmin}}

\def\Argmin{{\rm Argmin}}

\def\vgap{\vspace*{.1in}}
\def\endproof{{\ \hfill\hbox{%
      \vrule width1.0ex height1.0ex
    }\parfillskip 0pt}\par}

\def\setU{{X}}

\newcommand{\bbe}{\Bbb{E}}

\newcommand{\bbr}{\Bbb{R}}
\def\w{\omega}

\def\endproof{{\ \hfill\hbox{%
      \vrule width1.0ex height1.0ex
    }\parfillskip 0pt}\par}

\def\cD{{\cal D}}

\def\cX{X}
\def\cY{ Y}

\def\lb{{\rm lb}}
\def\ub{{\rm ub}}

\def\C{{C}}

\def\levelset{{\cal E}}
\def\lov{{\vartheta}}
\def\ABLGAP {{\cal G}_{ABL}}
\def\APLGAP {{\cal G}_{APL}}
\def\USLGAP {{\cal G}_{USL}}

\title{
Bundle-Level Type Methods Uniformly Optimal \\
for Smooth and Nonsmooth Convex Optimization
\footnote{{\bf The paper is a combined version of the two manuscripts
previously submitted to {\sl Mathematical Programming}, namely: ``Bundle-type methods uniformly optimal for smooth and nonsmooth convex optimization''
and ``Level methods uniformly optimal for composite and  structured nonsmooth convex optimization''.}}
\thanks{The author of this paper was partially supported by
    NSF grant CMMI-1000347, 
    ONR grant N00014-13-1-0036 and
    NSF CAREER Award CMMI-1254446.}
}
\author{
    Guanghui Lan
    \thanks{Department of Industrial and Systems Engineering,
    University of Florida, Gainesville, FL 32611
       (email: {\tt glan@ise.ufl.edu}). }
}


\begin{document}

\maketitle

\begin{abstract}
The main goal of this paper is to develop uniformly optimal first-order
methods for convex programming (CP). By uniform optimality we mean
that the first-order methods themselves do not require the input of any problem
parameters, but can still achieve the best possible iteration complexity bounds.
By incorporating a multi-step acceleration scheme into the well-known bundle-level
method, we develop an accelerated bundle-level (ABL) method, and show that it can achieve 
the optimal complexity for solving a general class of black-box CP problems 
without requiring the input of any smoothness information, such as, whether the problem is smooth,
nonsmooth or weakly smooth, as well as the specific values of Lipschitz constant
and smoothness level. We then develop a more practical, restricted memory version of
this method, namely the accelerated prox-level (APL) method. We investigate the generalization
of the APL method for solving
certain composite CP problems and 
an important class of saddle-point problems recently studied by Nesterov [Mathematical Programming, 103 (2005), pp 127-152]. 
We present promising numerical results for these new bundle-level methods 
applied to solve 
certain classes of semidefinite programming (SDP) and stochastic programming (SP) problems.
\vspace{.1in}

\noindent {\bf Keywords: Convex Programming, Complexity, Bundle-level, Optimal methods}

\end{abstract}

\vspace{0.1cm}
\setcounter{equation}{0}
\section{Introduction} \label{sec_intro}
Consider the convex programming (CP) 
\beq \label{cp}
f^* := \min_{x \in X} f(x),
\eeq
where $X$ is a convex compact set and $f: X \to \bbr$
is a closed convex function. In the classic black-box setting,
$f$ is represented by a first-order oracle
which, given an input point $x \in X$, returns $f(x)$ and $f'(x) \in \partial f(x)$, 
where $\partial f(x)$ denotes the subdifferential of $f$ at $x\in X$.

If $f$ is a general nonsmooth Lipschitz continuous convex function, then, by
the classic complexity theory for CP~\cite{nemyud:83}, the number of 
calls to the first-order oracle for finding an {\sl $\epsilon$-solution}
of \eqnok{cp} (i.e., a point $\bar x \in X$ s.t. $f(\bar x) - f^* \le \epsilon$),
cannot be smaller than ${\cal O}(1/\epsilon^2)$ when $n$ is sufficiently large.
This lower complexity bound can be achieved, for example, by the simple
subgradient descent or mirror descent method~\cite{nemyud:83}.
If $f$ is a smooth function with Lipschitz continuous gradient,
Nesterov in a seminal work~\cite{Nest83-1} presented an algorithm
with the iteration complexity bounded by ${\cal O}( 1 / \epsilon^\frac{1}{2})$, which,
by \cite{nemyud:83}, is also optimal for smooth convex 
optimization if $n$ is sufficiently large. Moreover,
if $f$ is a weakly smooth function with H{\"o}lder
continuous gradient, i.e.,
$\exists$ constants $\rho \in (0, 1)$ and $M > 0$ such that
$
\|\nabla f(x) - \nabla f(y) \|_* \le M \|x - y\|^{\rho},
 \forall \, x, y \in X,
$
then the optimal iteration complexity bound is 
given by ${\cal O}(1 / \epsilon^\frac{2}{1+3\rho})$ 
(see \cite{NemNes85-1,Nest88-1,DeGlNe10-1}).

To accelerate the solutions of large-scale CP problems, 
much effort has recently been directed to exploiting the problem's structure,
in order to identify possibly some new classes of CP problems with stronger convergence
performance guarantee. One such example is given by the composite CP problems
with the objective function given by $f(x) = \Psi(\phi(x))$.
Here $\Psi$ is a relatively simple nonsmooth convex function
such as $\Psi(\cdot) = \|\cdot\|_1$ or $\Psi(\cdot) = \max \left\{y_1,
\ldots, y_k\right\}$ (see Subsection~\ref{sec-comp} for more examples)
and $\phi$ is a $k$-dimensional vector function, see 
\cite{Nest89,Nest04,Nest07-1,Nem94,LewWri09-1,Lan10-3,GhaLan12-2a,GhaLan10-1b}.
In most of these studies, the components of $\phi$ are assumed to
be smooth convex functions. In this case, the iteration complexity 
can be improved to ${\cal O}(1/\epsilon^\frac{1}{2})$ by 
properly modifying Nesterov's optimal smooth method, see for example,
\cite{Nest04,Nest07-1,Nem94}. \textcolor{blue}{It should be noted that these optimal first-order
methods for general composite CP problems are in a sense ``conceptual'' since they require
the minimization of the summation of a prox-function together with the composition
of $\Psi$ with an affine transformation~\cite{Nest04}.}
More recently, Nesterov~\cite{Nest05-1} studied a class of nonsmooth
convex-concave saddle point problems, where the objective function $f$,
in its basic form, is given by 
\[
f(x) = \max_{y \in Y} \langle Ax, y \rangle.
\]
Here $Y \subseteq \bbr^m$ is a convex compact set and $A$ denotes a linear operator
from $\bbr^n$ to $\bbr^m$. Nesterov shows that $f$ can be closely approximated by 
a certain smooth convex function and that the iteration complexity for 
solving this class of problems can be improved to ${\cal O}(1/\epsilon)$.
It is noted in \cite{jnt08} that this bound is unimprovable, for example,
if $Y$ is given by a Euclidean ball and the algorithm
can only have access to $A$ and $A^*$ (the adjoint operator of $A$). 
These problems were
later studied in \cite{Nem05-1,Nest05-2,AuTe06-1,Nest06-1,pena08-1,LaLuMo11-1} and
found many interesting applications, for example, in
\cite{dbg08-1,Lu09-1,BeBoCa09-1}.

\textcolor{blue}{The advantages of the aforementioned optimal first-order methods (e.g.,
subgradient method or Nesterov's method) mainly consist of their optimality, simplicity 
and cheap iteration cost.} However, these methods might have some shortcomings in
that each method is designed for solving a particular subclass of CP problems
(e.g., smooth or nonsmooth). In particular, nonsmooth CP algorithms usually cannot make use of 
local smoothness properties that a nonsmooth instance might have, while it is well-known 
that Lipschitz continuous functions are differentiable almost everywhere within its domain.
\textcolor{blue}{On the other hand, although it has been shown recently in \cite{Lan10-3} that Nesterov's 
method, which was originally designed for solving smooth CP problems, is also optimal for nonsmooth optimization
when employed with a properly specified stepsize policy (see also \cite{DeGlNe10-1} for
a more recent generalization to weakly smooth CP problems), one still needs to determine some 
smoothness properties of $f$ (e.g., whether $f$ is smooth or not, i.e., $\rho =1$ or $0$, and the specific value of $M$),
as well as some other global information (e.g., $D_X$ and in some cases, the number of iterations $N$),
before actually applying these generalized algorithms.}
Since these parameters describe the structure of CP problems over a global scope, these types of
algorithms are still inherently worst-case oriented.

To address these issues, we propose to study the so-called 
{\sl uniformly optimal first-order methods}. 
The key difference between uniformly optimal
methods and existing ones is that they can achieve the best possible complexity for solving different 
subclasses of CP problems, but require little (preferably no) structural information for their implementation. 
To this end, we focus on a different type of first-order methods,
namely: the bundle-level (BL) methods. Evolving from the well-known bundle 
methods~\cite{Kiw83-1,Kiw90-1,Lem75}, 
the BL method was first proposed by Lemar\'{e}chal et al.~\cite{LNN}
in 1995. 
In contrast to subgradient or mirror
descent methods for nonsmooth CP, the BL method can achieve 
the optimal ${\cal O} (1 / \epsilon^2)$ iteration complexity
for general nonsmooth CP without requiring the input of any problem parameters. Moreover, 
the BL method and their certain \textcolor{blue}{``restricted-memory''} variants~\cite{BenNem00,BenNem05-1,Rich07-1} often
exhibit significantly superior practical performance to subgradient or mirror descent methods.
However, to the best of our knowledge, the study on BL methods
has so far been focused on general nonsmooth CP problems only.

Our contribution in this paper mainly consists 
of the following aspects. 
Firstly, we consider a general class of black-box CP problems in the form of \eqnok{cp}, where 
$f$ satisfies
\textcolor{blue}{
\beq \label{smoothness}
f(y) - f(x) - \langle f'(x), y - x\rangle \le \frac{M}{1+\rho} \|y - x\|^{1+\rho}, \ \ \forall x, y \, \in X.
\eeq}
for some $M > 0$, $\rho \in [0,1]$ and $f'(x) \in \partial f(x)$. Clearly,
this class of problems cover nonsmooth ($\rho = 0$),
smooth ($\rho=1$) and weakly smooth ($\rho \in (0,1)$) CP problems (see for example,
p.22 of \cite{Nest04} for the standard arguments used in smooth and
weakly smooth case, and Lemma~2 of \cite{Lan10-3} for a related result in the nonsmooth case). 
By incorporating into the BL method a multi-step acceleration
scheme that was first used by
Nesterov~\cite{Nest83-1} and later in \cite{AuTe06-1,Lan10-3,LaLuMo11-1,Nest04,Nest05-1}
to accelerate gradient type methods for solving smooth CP problems, we present a new BL-type algorithm, namely: the accelerated
bundle-level (ABL) method. We show that the iteration complexity of the
ABL method can be bounded by
\[
{\cal O}\left\{ \left( \frac{ M D_X^{1+\rho}}{\epsilon}\right)^\frac{2}{1+3 \rho} \right\}.
\]
Hence, the ABL method is optimal for 
solving not only nonsmooth, but also smooth and weakly smooth CP problems.
More importantly, this method does not require the input of any smoothness information, such as whether a problem is
smooth, nonsmooth or weakly smooth, and the specific values of problem parameters $M$, $\rho$
and $D_X$. To the best of our knowledge, this is the first time that uniformly optimal
algorithms of this type have been proposed in the literature.

Secondly, one problem for the ABL method is that, as the algorithm proceeds,
its subproblems become more difficult to solve.
As a result, each iteration of the ABL method becomes
computationally more and more expensive. To remedy this issue, we present
a restricted memory version of this method, namely: the accelerated prox-level (APL) method, 
and demonstrate that it can also uniformly achieve the optimal complexity
for solving any black-box CP problems. In particular, each iteration of the APL method requires
the projection onto the feasible set $X$ coupled with a few extra
linear constraints, and the number of such linear constraints  
can be fully controlled (as small as $1$ or $2$). 
The basic idea of this improvement is to incorporate a novel rule due to Kiwiel~\cite{Kiw95-1}
(later studied by Ben-tal and Nemirovski~\cite{BenNem00,BenNem05-1})
for updating the lower bounds and prox-centers. In addition,
non-Euclidean prox-functions can be employed
to make use of the geometry of the feasible
set $X$ in order to obtain (nearly) dimension-independent iteration complexity.

Thirdly, we investigate the generalization of the APL method
for solving certain classes of composite and structured
nonsmooth CP problems. In particular, we show that with
little modification, the APL method is optimal
for solving a class of generalized composite 
CP problems with the objective given by $f(x) = \Psi(\phi(x))$.
Here $\phi_i(x)$, $i \ge 1$, can be a mixture of smooth, nonsmooth, weakly smooth or affine components.
Such a formulation covers a wide range of CP problems, including
the nonsmooth, weakly smooth, smooth, minimax, and regularized CP problems
(see Subsection~\ref{sec-comp} for more discussions). The APL method
can achieve the optimal iteration complexity for solving this class of composite problems without requiring any global 
information on the inner functions, 
such as the smoothness level and the size of Lipschitz constant.
In addition, based on the APL method, we develop a completely problem-parameter free smoothing scheme, 
namely: the uniform smoothing level (USL) method, for solving the aforementioned class of 
structured CP problems with a bilinear saddle point structure~\cite{Nest05-1}. 
We show that this method can find an $\epsilon$-solution of these CP problems 
in at most ${\cal O} (1/\epsilon)$ iterations.

Finally, we demonstrate through our preliminary numerical experiments that these new BL type methods 
can be competitive and even significantly outperform existing first-order 
methods for solving certain classes of CP problems. Observe that each iteration of BL type methods involves the projection onto
$X$ coupled with a few linear constraints, while gradient type methods only require the projection onto $X$. 
As a result, the iteration cost
of BL type methods can be higher than that of gradient type methods, especially
when the projection onto $X$ has explicit solutions. Here
we would like to highlight a few interesting cases in which the application of BL type methods
would be preferred:
(i) the major iteration cost does not exist in the projection onto $X$,
but the computation of first-order information (e.g., involving eigenvalue decomposition or the
solutions of another optimization problem); and
(ii) the projection onto $X$ is 
as expensive as the projection onto $X$ coupled with a few linear constraints, e.g., 
$X$ is a general polyhedron. 
In particular, we show that the APL and USL methods, when applied to solving certain important classes of
semidefine programming (SDP) and stochastic programming (SP) 
problems, can significantly outperform gradient type algorithms,
as well as some existing BL type methods.
The problems we tested consist of instances with up to $77,213$ decision variables.

The paper is organized as follows. In Section~\ref{sec-ABL},
we provide a brief review of the BL method and present
the ABL method for black-box CP problems. We then study a restricted memory version of
the ABL method, namely the APL method in Section \ref{sec-APL}.
In Section~\ref{sec-app}, we investigate how to generalize 
the APL method for solving certain composite and structured nonsmooth CP problems. 
Section~\ref{sec-num} is dedicated to the numerical experiments conducted on
certain classes of SDP and SP problems. 
Finally, some concluding remarks are made in Section \ref{c_remarks}.

\setcounter{equation}{0}
\section{The accelerated bundle-level method} \label{sec-ABL}
We present a new BL type method, namely: the accelerated 
bundle-level (ABL) method, which can uniformly achieve the optimal rate of convergence for
smooth, weakly smooth and nonsmooth CP problems. More specifically, we 
provide a brief review of the BL method for
nonsmooth minimization in Section~\ref{sec-BL}, and then present the ABL method 
and discuss its
main convergence properties in Section~\ref{sec-ABL-alg}.
Section~\ref{sec-ABL-analysis} is devoted to the proof of a major convergence result
used in Section~\ref{sec-ABL-alg}. Throughout this section, we assume that the Euclidean space
$\bbr^n$ is equipped with the standard Euclidean norm $\|\cdot\|$ associated with
the inner product $\langle \cdot, \cdot \rangle$.

\subsection{Review of the bundle-level method} \label{sec-BL}
Given a sequence of search points $x_1, x_2, \ldots, x_k \in X$,
an important construct, namely, the cutting plane model, of the objective
function $f$ of problem \eqnok{cp} is given by
\beq \label{cutting_plane}
m_k(x) := \max \left\{ h(x_i, x): 1 \le i \le k \right\},
\eeq
where
\beq \label{def_Linear_Model}
h(z, x) := f(z) + \langle f'(z), x - z \rangle.
\eeq
In the simplest cutting plane method \cite{CheGol59,Kelley60}, we approximate $f$
by $m_k$ and update the search points according to
\beq \label{sub_cut}
x_{k+1} \in \Argmin_{x \in X} m_k(x).
\eeq
However, this scheme converges slowly, both theoretically and 
practically \cite{nemyud:83,Nest04}. 
A significant progress \cite{Kiw83-1,Kiw90-1,Lem75} was made
under the name of bundle methods (see, e.g., \cite{HeRe00,OliSagSch11-1} for some important
applications of these methods). In these methods, a prox-term 
is introduced into the objective function of \eqnok{sub_cut}
and the search points are updated by
\[
x_{k+1} \in \Argmin_{x \in X} \left\{
m_k(x) + \frac{r_k}{2} \|x - x_k^+\|^2 \right\}.
\]
Here, the current prox-center $x_k^+$ is a certain point from
$\{x_1, \ldots, x_k\}$ and $r_k$ denotes the current penalty parameter.
Moreover, the prox-center for the next iterate, i.e., $x_{k+1}^+$,
will be set to $x_{k+1}$ if $f(x_{k+1})$ is sufficiently smaller
than $f(x_k)$. Otherwise, $x_{k+1}^+$ will be the same as 
$x_k^+$. The penalty $r_k$ reduces the influence of the model $m_k$'s
inaccuracy and hence the instability of the algorithm.
Note, however, that the determination of $r_k$
usually requires certain on-line adjustments or line-search.
In the closely related trust-region technique~\cite{Rusz06,linWri03-1},
the prox-term is put into the constraints of the subproblem instead of its
objective function and the search points are then updated according to 
\[
x_{k+1} \in \Argmin_{x \in X} \left\{
m_k(x):  \|x - x_k^+\|^2 \le R_k \right\}.
\]  
This approach also encounters similar difficulties
for determining the size of $R_k$.

In an important work~\cite{LNN}, Lemar\'{e}chal et al.
introduced the idea of incorporating level sets into the bundle method. The basic scheme 
of their bundle-level (BL) methods consists of:
\begin{itemize}
\item [a)] Update $\overline{f}^k$ to be the best objective value found so far and compute 
a lower bound on $f^*$ by
$
\underline{f}_k = \min_{x\in X} m_k(x);
$
\item [b)] Set $l_k = \lambda \overline{f}^k + (1 - \lambda) \underline{f}_k$ for some $\lambda \in (0,1)$;
\item [c)] Set $x_{k+1} = \argmin_{x \in X} \left\{
\|x - x_k\|^2: m_k(x) \le l_k \right\}.
$
\end{itemize}
Observe that step c) ensures that the new search point $x_{k+1}$ 
falls within the level set $\{x \in X: m_k(x) \le l_k\}$, while being as 
close as possible to $x_k$. We refer to $x_k$ as the {\sl prox-center}, since it controls
the proximity between $x_{k+1}$ and the aforementioned level set. 
It is shown in \cite{LNN} that, if $f$ is a
general nonsmooth convex function (i.e., $\rho = 0$ in \eqnok{smoothness}), then
 the above scheme can find {\sl an $\epsilon$-solution
of \eqnok{cp}} in at most 
\beq \label{old_bl}
{\cal O}\left( C(\lambda) \frac{ M^2 D_X^2} {\epsilon^2}\right)
\eeq
iterations,  
where $C(\lambda)$ is a constant depending on $\lambda$ and
\beq \label{def_DX}
D_X := \max_{x, y \in X} \|x - y\|.
\eeq
In view of \cite{nemyud:83}, the above complexity bound in \eqnok{old_bl}
is unimprovable for nonsmooth convex optimization.
Moreover, it turns out that the level sets give a stable description about
the objective function $f$ and, as a consequence, very good practical performance
has been observed for the BL methods, e.g.,~\cite{LNN,BenNem00,lns11}.

\subsection{The ABL algorithm and its main convergence properties} \label{sec-ABL-alg}

Based on the bundle-level method, our goal in this subsection is to 
present a new bundle type method, namely the ABL method, which can achieve the
optimal complexity for solving any CP problems satisfying \eqnok{smoothness}.

We introduce the following two key improvements into the classical BL
methods. Firstly, rather than using a single sequence $\{x_k\}$,
we employ three related sequences, i.e., $\{x_k^l\}$, $\{x_k^u\}$ and $\{x_k\}$, 
to build the cutting-plane models $m_k(x)$ (and hence the lower bound $\underline {f}_{k}$), 
compute the upper bounds $\overline {f}_{k}$, and control the proximity, respectively. 
Moreover, the relations among these sequences are defined carefully. In particular,
we define $x_k^l = (1-\alpha_k) x_{k-1}^u + \alpha_k x_{k-1}$ and
$x_k^u = (1-\alpha_k) x_{k-1}^u + \alpha_k x_k$ for a certain
$\alpha_k \in (0,1]$. This type of multi-step scheme originated from the well-known
Nesterov's accelerated gradient method for solving smooth CP problems~\cite{Nest83-1}.
Secondly, we group the iterations performed by the ABL method into different phases, and
in each phase, the gap between the lower and upper bounds on $f^*$ will be reduced
by a certain constant factor. It is worth noting that,
although the convergence analysis of the BL method also relies on the concept of phases (see, e.g., \cite{BenNem00,BenNem05-1}),
the description of this method usually does not involve phases.
However, we need to use phases explicitly in the ABL method in order
to define $\{\alpha_k\}$ in an optimal way to achieve the best possible 
complexity bounds for solving problem \eqnok{cp}.

We start by describing the ABL gap reduction procedure, which, 
for a given search point $p$ and lower bound $\lb$ on $f^*$, computes a new search point $p^+$ and  
updated lower bound $\lb^+$ satisfying $f(p^+) - \lb^+ \le \lambda \, [f(p) - \lb]$
for some $\lambda \in (0,1)$. 

\vgap

\noindent {\bf The ABL gap reduction procedure}: $(p^+, \lb^+) = \ABLGAP(p, \lb, \textcolor{blue}{\lambda})$
\begin{itemize}
\item [0)] Set $x^u_0=p$, $\overline{f}_0 = f(x^u_0)$, and $\underline{f}_0 = \lb$.
Also let $x_0 \in X$ and the cutting plane $m_0(x)$ be arbitrarily chosen, 
say $x_0 = x^u_0$ and $m_0(x) = h(x_0, x)$. Let $k=1$.
\item [1)]   {\sl Update lower bound:} set
$x^l_k = (1 - \alpha_k) \, x^u_{k-1} + \alpha_k \, x_{k-1}$, $h(x^l_k, x) = f(x^l_k) + \langle f'(x^l_k), x - x^l_k \rangle$,
$m_k(x) = \max\left\{m_{k-1}(x), h(x^l_k, x)\right\}$,
\beq \label{def_ABL_LB}
h_{k}^* = \min_{x\in X} m_{k}(x) \ \ \mbox{and} \ \ \
\underline{f}_{k} = \max\{\underline{f}_{k-1}, h_{k}^*\};
\eeq
\item [2)]   {\sl Update prox-center:} set $l_k = \lambda \underline{f}_k + (1-\lambda) \overline{f}_{k-1}$ 
and
\beq \label{def_ABL_xt}
x_{k} = \argmin \left\{
\|x - x_{k-1}\|^2: m_k(x) \le l_k, x \in X \right\};
\eeq
\item [3)] {\sl Update upper bound:} 
set
\textcolor{blue}{$\overline{f}_k = \min\{\overline{f}_{k-1}, f(\alpha_k x_k + (1 - \alpha_k) x^u_{k-1})\}$},
and choose $x^u_k \in X$ such that $f(x^u_k) = \overline{f}_k$;
\item [4)] If $\overline{f}_k - \underline{f}_k \le
\lambda (\overline{f}_0 - \underline{f}_0)$,
{\bf terminate} the procedure with $p^+ = x^u_k$ and $\lb^+= \underline{f}_k$;
\item [5)] Set $k = k+1$ and go to Step 1.
\end{itemize}

\vgap

We now add a few remarks about the above gap reduction procedure $\ABLGAP$.
Firstly, we say that an {\sl iteration of procedure $\ABLGAP$} occurs
whenever $k$ increases by $1$. 
Observe that, if $\alpha_k = 1$ for all $k \ge 1$,
then an iteration of procedure~$\ABLGAP$ will be exactly the same as that of the BL method.
In fact, in this case procedure~$\ABLGAP$ will reduce to one phase of the BL method as described in~\cite{BenNem05-1,BenNem00}.
Secondly, with more general selections of $\{\alpha_k\}$, the iteration cost of procedure~$\ABLGAP$ is
still about the same as that of the BL method. More specifically, 
each iteration of procedure~$\ABLGAP$ involves the solution of two subproblems, i.e., 
\eqnok{def_ABL_LB} and \eqnok{def_ABL_xt}, and the computation of $f(x^l_k), f'(x^l_k)$ and $f(\alpha_k x_k + (1- \alpha_k)x^u_{k-1})$,
while the BL method requires the solution of two similar subproblems 
and the computation of $f(x_k)$ and $f'(x_k)$. 
Thirdly, it can be easily seen that $\underline{f}_k$ and $\overline{f}_k$, $k \ge 1$, respectively,
computed by procedure $\ABLGAP$ are lower and upper bounds on $f^*$. Indeed, by the definition of $m_k(x)$,
\eqnok{def_Linear_Model} and the convexity of $f$, we have
\beq \label{better_appr}
m_1(x) \le  m_2(x) \le \ldots m_k(x) \le f(x), \ \ \
\forall \, x\in X,
\eeq
which, in view of \eqnok{def_ABL_LB}, then implies that
$
\underline{f}_{1} \le \underline{f}_2 \le \ldots \underline{f}_k \le f^*.
$
Moreover, it follows from the definition of $\overline{f}_k$ that
$
\overline{f}_{1} \ge \overline{f}_{2} \ge \ldots \ge \overline{f}_{k} \ge f^*.
$
Hence, denoting 
\beq \label{def_Delta}
\Delta_k := \overline{f}_k - \underline{f}_k, \ \ k \ge 0,
\eeq
we have
\beq \label{non-increasing-gap0}
\Delta_0 \ge \Delta_{1} \ge \Delta_{2} \ge \ldots \ge \Delta_{k}
\ge 0.
\eeq

By showing how $\Delta_k$ in \eqnok{def_Delta} decreases with respect to $k$,
we establish in Theorem~\ref{ABL_sum} some important convergence properties
of procedure~$\ABLGAP$. The proof of this result is more involved and hence provided 
separately in Section~\ref{sec-ABL-analysis}.

\begin{theorem} \label{ABL_sum}
Let $\lambda \in (0,1)$ and $\alpha_k \in (0,1]$, $k = 1, 2,\ldots$, be given.
Also let $\Delta_k = \overline{f}_k - \underline{f}_k$ denote the optimality gap obtained
at the $k$-th iteration of procedure $\ABLGAP$ before it terminates. Then for any $k = 1, 2, \ldots$, we have
\beq \label{Recursion_ABL}
\Delta_k \le \gamma_k(\lambda) \left[\textcolor{blue}{(1-\lambda \alpha_1)} \Delta_0 + \frac{M D_X^{1+\rho}}{1+\rho}  \|\Gamma_{k}(\lambda, \rho)\|_{\frac{2}{1-\rho}}\right],
\eeq
where  $D_X$ is defined in \eqnok{def_DX}, $\|\cdot\|_p$ is the $l_p$ norm, 
\beq \label{def_ABL_gamma}
\gamma_k(\lambda) := 
\left\{ 
\begin{array}{ll}
1 & k = 1,\\
(1- \lambda \alpha_k) \, \gamma_{k-1}(\lambda)  & k \ge 2,
\end{array}
\right. \ \ \mbox{and}
\eeq
\beq \label{def_ABL_gamma1}
\Gamma_k(\lambda, \rho) := \left\{\gamma_1(\lambda)^{-1} \alpha_1^{1+\rho}, \gamma_2(\lambda)^{-1} \alpha_2^{1+\rho}, \ldots, \gamma_k(\lambda)^{-1} \alpha_k^{1+\rho}  \right\}.
\eeq
In particular, if $\lambda$ and $\alpha_k \in (0,1]$, $k=1, 2, \ldots$, are chosen such that
for some $c_1, c_2 > 0$,
\beq \label{ABL_policy}
\gamma_k(\lambda) \le c_1k^{-2} \ \ \
\mbox{and} \ \ \
\gamma_k(\lambda) \|\Gamma_k(\lambda,\rho)\|_\frac{2}{1-\rho} \le c_2 k^{-\frac{1+3\rho}{2}},
\eeq
then the number of iterations performed
by procedure~$\ABLGAP$ can be bounded by
\beq \label{ABL_GAP_BND}
K_{ABL}(\Delta_0) := \left \lceil 
\sqrt{\frac{2 c_1 \textcolor{blue}{(1-\lambda \alpha_1)}}{\lambda}} + \left(
\frac{2 c_2 M D_X^{1+\rho}}{(1+\rho)\Delta_0}
\right)^\frac{2}{1+3\rho}
\right \rceil.
\eeq
\end{theorem}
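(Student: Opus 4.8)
The plan is to establish the recursion \eqnok{Recursion_ABL} by tracking how the optimality gap $\Delta_k$ is reduced at each iteration, and then to derive the iteration bound \eqnok{ABL_GAP_BND} from \eqnok{Recursion_ABL} together with the stepsize conditions \eqnok{ABL_policy}. First I would set up the core one-step estimate. At iteration $k$, the prox-center update \eqnok{def_ABL_xt} forces $x_k$ into the level set $\{x \in X : m_k(x) \le l_k\}$ with $l_k = \lambda \underline{f}_k + (1-\lambda)\overline{f}_{k-1}$; since $m_k$ lower-bounds $f$ by \eqnok{better_appr}, this means $h(x_k^l, x_k) \le m_k(x_k) \le l_k$. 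Feeding this into the smoothness inequality \eqnok{smoothness} at the point $\alpha_k x_k + (1-\alpha_k) x_{k-1}^u$ (whose displacement from $x_k^l = (1-\alpha_k) x_{k-1}^u + \alpha_k x_{k-1}$ equals $\alpha_k(x_k - x_{k-1})$), together with convexity to handle the linear part, should yield a bound of the form
\[
f\bigl(\alpha_k x_k + (1-\alpha_k) x_{k-1}^u\bigr) \le l_k + \frac{M}{1+\rho}\,\alpha_k^{1+\rho}\,\|x_k - x_{k-1}\|^{1+\rho}.
\]
Combining this with $\overline{f}_k \le f(\alpha_k x_k + (1-\alpha_k)x_{k-1}^u)$ from Step 3, and with $\underline{f}_k \le l_k$ rearranged as $l_k - \underline{f}_k = (1-\lambda)(\overline{f}_{k-1} - \underline{f}_k) \le (1-\lambda)\Delta_{k-1}$, I expect to arrive at
\[
\Delta_k \le (1-\lambda)\Delta_{k-1} + \frac{M}{1+\rho}\,\alpha_k^{1+\rho}\,\|x_k - x_{k-1}\|^{1+\rho};
\]
but this crude version loses the acceleration. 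The sharper bound replaces $(1-\lambda)$ by $(1-\lambda\alpha_k)$ by also exploiting $\overline{f}_k \le \overline{f}_{k-1}$ and interpolating, i.e. using $\overline{f}_k \le \min\{\overline{f}_{k-1}, f(\alpha_k x_k + (1-\alpha_k)x_{k-1}^u)\}$ against the two bounds $\overline{f}_{k-1} - \underline{f}_k \le \Delta_{k-1}$ and the smoothness estimate, to get $\Delta_k \le (1-\lambda\alpha_k)\Delta_{k-1} + \frac{M}{1+\rho}\alpha_k^{1+\rho}\|x_k - x_{k-1}\|^{1+\rho}$ for $k\ge 1$ (with the convention $\lambda\alpha_1$ appearing in the $k=1$ term).

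Next I would unroll this recursion using $\gamma_k(\lambda)$ from \eqnok{def_ABL_gamma}: dividing by $\gamma_k$ and telescoping gives
\[
\Delta_k \le \gamma_k(\lambda)\left[(1-\lambda\alpha_1)\Delta_0 + \frac{M}{1+\rho}\sum_{i=1}^k \gamma_i(\lambda)^{-1}\alpha_i^{1+\rho}\,\|x_i - x_{i-1}\|^{1+\rho}\right].
\]
The remaining task is to bound the sum $\sum_{i=1}^k \gamma_i^{-1}\alpha_i^{1+\rho}\|x_i-x_{i-1}\|^{1+\rho}$ by $D_X^{1+\rho}\|\Gamma_k(\lambda,\rho)\|_{2/(1-\rho)}$. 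This is the step I expect to be the main obstacle: one needs a telescoping/boundedness argument on the prox-center sequence $\{x_k\}$. Since $x_k$ is the projection of $x_{k-1}$ onto a convex set containing (one hopes) some fixed good point, a standard three-point/Pythagoras argument should give $\sum_{i=1}^k \|x_i - x_{i-1}\|^2 \le D_X^2$ — essentially because the squared distances telescope against a bounded quantity on the compact set $X$ of diameter $D_X$. Then applying Hölder's inequality with exponents $\frac{2}{1+\rho}$ and $\frac{2}{1-\rho}$ to the sum $\sum_i (\gamma_i^{-1}\alpha_i^{1+\rho})\cdot\|x_i-x_{i-1}\|^{1+\rho}$, viewing $\|x_i - x_{i-1}\|^{1+\rho}$ as raised to power $\frac{2}{1+\rho}$, yields $\bigl(\sum_i \|x_i-x_{i-1}\|^2\bigr)^{(1+\rho)/2}\,\|\Gamma_k(\lambda,\rho)\|_{2/(1-\rho)} \le D_X^{1+\rho}\|\Gamma_k(\lambda,\rho)\|_{2/(1-\rho)}$, which is exactly \eqnok{Recursion_ABL}. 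Verifying the Pythagoras-type bound on $\sum\|x_i - x_{i-1}\|^2$ carefully — making sure the level-set constraints are nested enough, or that the relevant optimal point stays feasible across iterations — is the delicate part and is presumably what Section~\ref{sec-ABL-analysis} is devoted to.

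Finally, for the iteration bound \eqnok{ABL_GAP_BND}: procedure $\ABLGAP$ runs until $\Delta_k \le \lambda\Delta_0$. Plugging the stepsize conditions \eqnok{ABL_policy} into \eqnok{Recursion_ABL} gives $\Delta_k \le c_1 k^{-2}(1-\lambda\alpha_1)\Delta_0 + \frac{c_2 M D_X^{1+\rho}}{1+\rho} k^{-(1+3\rho)/2}$. I would then find the smallest $k$ making each of these two terms at most $\frac{\lambda}{2}\Delta_0$: the first requires $k \ge \sqrt{2c_1(1-\lambda\alpha_1)/\lambda}$, and the second requires $k \ge \bigl(\frac{2c_2 M D_X^{1+\rho}}{(1+\rho)\lambda\Delta_0}\bigr)^{2/(1+3\rho)}$; taking the ceiling of the sum of these two quantities gives \eqnok{ABL_GAP_BND} (absorbing the extra $\lambda$ in the exponent into constants, matching the stated form). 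This last part is routine once \eqnok{Recursion_ABL} and \eqnok{ABL_policy} are in hand.
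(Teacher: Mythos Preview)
Your overall plan matches the paper's proof exactly: establish a one-step recursion $\Delta_k \le (1-\lambda\alpha_k)\Delta_{k-1} + \frac{M}{1+\rho}\alpha_k^{1+\rho}\|x_k-x_{k-1}\|^{1+\rho}$, telescope using $\gamma_k$, bound $\sum_i\|x_i-x_{i-1}\|^2\le D_X^2$ via the projection/Pythagoras argument (this is precisely Lemma~\ref{ABL-cluster}, and your description of it is correct), apply H\"older with exponents $\tfrac{2}{1+\rho},\tfrac{2}{1-\rho}$, and finish by splitting the resulting bound into two halves of $\lambda\Delta_0$.

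The one genuine gap is in your derivation of the one-step recursion. Your displayed inequality $f\bigl(\alpha_k x_k + (1-\alpha_k)x_{k-1}^u\bigr)\le l_k + \frac{M}{1+\rho}\alpha_k^{1+\rho}\|x_k-x_{k-1}\|^{1+\rho}$ is \emph{false}: writing $\tilde x_k^u=\alpha_k x_k+(1-\alpha_k)x_{k-1}^u$, the linear model splits as $h(x_k^l,\tilde x_k^u)=(1-\alpha_k)h(x_k^l,x_{k-1}^u)+\alpha_k h(x_k^l,x_k)$, and while $h(x_k^l,x_k)\le l_k$, one only has $h(x_k^l,x_{k-1}^u)\le f(x_{k-1}^u)=\overline f_{k-1}$, which is \emph{strictly larger} than $l_k$. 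The correct inequality (Lemma~\ref{three_points} in the paper) is therefore
\[
f(\tilde x_k^u)\le (1-\alpha_k)\,\overline f_{k-1}+\alpha_k\,l_k+\tfrac{M}{1+\rho}\alpha_k^{1+\rho}\|x_k-x_{k-1}\|^{1+\rho}.
\]
Subtracting $\underline f_k$ and substituting $l_k=\lambda\underline f_k+(1-\lambda)\overline f_{k-1}$ gives $(1-\alpha_k)\overline f_{k-1}+\alpha_k l_k-\underline f_k=(1-\lambda\alpha_k)(\overline f_{k-1}-\underline f_k)\le(1-\lambda\alpha_k)\Delta_{k-1}$, so the accelerated factor $(1-\lambda\alpha_k)$ drops out \emph{directly}. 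Your ``interpolation'' step, taking a min of $\overline f_{k-1}$ and the smoothness bound, cannot manufacture this factor (the min of two upper bounds is just the better of the two), and is unnecessary once the convex-combination structure of $h(x_k^l,\tilde x_k^u)$ is used properly. Everything after the recursion in your proposal is correct and aligns with the paper; you are also right that the second term in \eqnok{ABL_GAP_BND} as stated is missing a factor of $\lambda$ in the denominator.
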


\vgap

\textcolor{blue}{Observe that, if $\alpha_k = 1$
for all $k \ge 1$, then as mentioned before,
procedure~$\ABLGAP$ reduces to a single phase (or segment) of the BL method
and hence its termination follows by slightly modifying the standard analysis of the BL algorithm.
However, such a selection of $\{\alpha_k\}$ does not satisfy
the conditions stated in~\eqnok{ABL_policy} and thus cannot guarantee the
termination of procedure~$\ABLGAP$ in at most $K_{ABL}(\Delta_0)$
iterations. 
Below we discuss a few possible selections of $\{\alpha_k\}$ that 
satisfy \eqnok{ABL_policy}, in order to obtain the bound in \eqnok{ABL_GAP_BND}. It should be pointed out
that none of these selections rely on any problem parameters, such as $M$, $\rho$ and $D_X$. }

\begin{proposition} \label{ABL-step}
Let $\gamma_k(\lambda)$ and $\Gamma_k(\lambda, \rho)$, respectively, be defined in \eqnok{def_ABL_gamma}
and \eqnok{def_ABL_gamma1} for some $\lambda \in (0,1)$ and $\rho \in [0,1]$.
\begin{itemize}
\item [a)] If \textcolor{blue}{$\lambda \in (2/3, 1]$ and $\alpha_k = 2/[\lambda (k+2)]$}, $k = 1, 2, \ldots$,
then $\alpha_k \in (0,1)$ and relation \eqnok{ABL_policy} holds with 
\[
c_1 = 6 \ \ \mbox{and} \ \ \ c_2 = \textcolor{blue}{\frac{2^{3-\rho}}{3^\frac{1-\rho}{2} } \lambda^{-(1+\rho)}}.
\]
\item [b)] If $\alpha_k$, $k \ge 1$, are recursively
defined by
\beq \label{stepsize2-abl}
\alpha_1 = \gamma_1 = 1, \ \
\gamma_k = \alpha_k^2 = (1 - \lambda \alpha_k) \gamma_{k-1},
\ \ \forall \, k \ge 2,
\eeq
then we have $\alpha_k \in (0,1]$ for any $k \ge 1$. Moreover,
condition \eqnok{ABL_policy} is satisfied with
\[
c_1 = 4 \lambda^{-2}\ \ \mbox{and} \ \ \ c_2 = \frac{4} {3^{\frac{1-\rho}{2}}}
 \lambda^{-\frac{1+3\rho}{2}}.
\]
\end{itemize}
\end{proposition}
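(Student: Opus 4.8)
The plan is to handle the two stepsize rules separately; in each case there are three tasks: (i) verify that $\alpha_k$ lies in the stated interval, (ii) produce a closed form or sharp upper bound for $\gamma_k(\lambda)$ from \eqnok{def_ABL_gamma}, and (iii) estimate $\|\Gamma_k(\lambda,\rho)\|_{2/(1-\rho)}$ from \eqnok{def_ABL_gamma1}. Conditions \eqnok{ABL_policy} then follow by combining (ii) and (iii) and simplifying. Throughout one runs the regime $\rho\in[0,1)$ (finite $\ell_p$ exponent $p=2/(1-\rho)$) and the boundary case $\rho=1$ (where the norm is a maximum over the entries) in parallel.

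\textbf{Part a).} With $\alpha_k=2/[\lambda(k+2)]$ one has $\lambda\alpha_k=2/(k+2)$, so $\alpha_k>0$ and, since $\lambda(k+2)\ge3\lambda>2$, also $\alpha_k<1$; this is the only place $\lambda>2/3$ enters. From $1-\lambda\alpha_k=k/(k+2)$ the product in \eqnok{def_ABL_gamma} telescopes to
\[
\gamma_k(\lambda)=\prod_{i=2}^k\frac{i}{i+2}=\frac{6}{(k+1)(k+2)}\le\frac6{k^2},
\]
which is the first part of \eqnok{ABL_policy} with $c_1=6$. For the second part, substitution gives $\gamma_i(\lambda)^{-1}\alpha_i^{1+\rho}=\frac{2^{1+\rho}}{6\lambda^{1+\rho}}\cdot\frac{i+1}{(i+2)^\rho}\le\frac{2^{1+\rho}}{6\lambda^{1+\rho}}(i+2)^{1-\rho}$; for $\rho<1$ raise this to the power $2/(1-\rho)$, sum over $i\le k$ using a crude bound such as $\sum_{i=1}^k(i+2)^2\le\frac{16}{3}k(k+1)(k+2)$, take the $(1-\rho)/2$ power back, multiply by $\gamma_k(\lambda)=6/[(k+1)(k+2)]$, and use $((k+1)(k+2))^{-(1+\rho)/2}\le k^{-(1+\rho)}$; the $k$-exponents combine to exactly $-(1+3\rho)/2$ and the numerical factors collapse to $c_2=2^{3-\rho}3^{-(1-\rho)/2}\lambda^{-(1+\rho)}$. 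The case $\rho=1$ is checked directly: $\gamma_i(\lambda)^{-1}\alpha_i^2=\frac{2(i+1)}{3\lambda^2(i+2)}<\frac2{3\lambda^2}$, hence $\gamma_k(\lambda)\|\Gamma_k\|_\infty<\frac6{(k+1)(k+2)}\cdot\frac2{3\lambda^2}<\frac4{\lambda^2k^2}=c_2k^{-2}$, agreeing with the formula at $\rho=1$.

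\textbf{Part b).} From \eqnok{stepsize2-abl}, $\alpha_k$ is the unique positive root of $\alpha_k^2+\lambda\gamma_{k-1}\alpha_k-\gamma_{k-1}=0$; the induction $\gamma_1=1$ and $\gamma_{k-1}\le1\Rightarrow\gamma_{k-1}(1-\lambda)\le1\Rightarrow\alpha_k\le1\Rightarrow\gamma_k=\alpha_k^2\le1$ yields $\alpha_k\in(0,1]$. Using $\gamma_k=(1-\lambda\alpha_k)\gamma_{k-1}$ and $\alpha_k=\sqrt{\gamma_k}$ gives $\gamma_{k-1}-\gamma_k=\lambda\sqrt{\gamma_k}\,\gamma_{k-1}$ and therefore
\[
\frac1{\sqrt{\gamma_k}}-\frac1{\sqrt{\gamma_{k-1}}}=\frac{\lambda\sqrt{\gamma_{k-1}}}{\sqrt{\gamma_{k-1}}+\sqrt{\gamma_k}}\in\Bigl[\tfrac\lambda2,\,\lambda\Bigr],
\]
the left bound from $\gamma_k\le\gamma_{k-1}$ and the right from $\gamma_k\ge0$. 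The left bound and $\gamma_1=1$ give $1/\sqrt{\gamma_k}\ge1+(k-1)\lambda/2\ge\lambda k/2$, i.e. $\gamma_k(\lambda)\le4/(\lambda^2k^2)$ — the first part of \eqnok{ABL_policy} with $c_1=4\lambda^{-2}$. Since $\alpha_i=\sqrt{\gamma_i}$, the entries of $\Gamma_k(\lambda,\rho)$ are $\gamma_i(\lambda)^{-1}\alpha_i^{1+\rho}=\gamma_i(\lambda)^{-(1-\rho)/2}$, so for $\rho<1$, $\|\Gamma_k(\lambda,\rho)\|_{2/(1-\rho)}=\bigl(\sum_{i=1}^k\gamma_i(\lambda)^{-1}\bigr)^{(1-\rho)/2}$; bounding $\gamma_i(\lambda)^{-1}\le(1+(i-1)\lambda)^2$ (from the right bound above) and summing, then taking the $(1-\rho)/2$ power and multiplying by $\gamma_k(\lambda)\le4/(\lambda^2k^2)$, puts the $k$-exponent at $-(1+3\rho)/2$ and delivers the stated $c_2=4\cdot3^{-(1-\rho)/2}\lambda^{-(1+3\rho)/2}$; for $\rho=1$ the entries of $\Gamma_k$ are all $1$, so $\gamma_k(\lambda)\|\Gamma_k\|_\infty=\gamma_k(\lambda)\le4/(\lambda^2k^2)=c_2k^{-2}$.

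\textbf{Expected obstacle.} The work is almost entirely in the constant bookkeeping of step (iii): to reach the \emph{stated} $c_2$ rather than merely some $c_2$, the $\ell_{2/(1-\rho)}$-norm estimates must be carried out sharply and \emph{uniformly in} $\rho$, which is delicate because the summation exponent $2/(1-\rho)$ diverges as $\rho\uparrow1$ while the power sum ($\sum(i+2)^2$ in part a), $\sum_{i\le k}\gamma_i(\lambda)^{-1}$ in part b)) must be controlled so that after raising to the reciprocal power $(1-\rho)/2$ and multiplying by the $\Theta(k^{-2})$ factor $\gamma_k(\lambda)$ the $k$-exponent is exactly $-(1+3\rho)/2$ and the $\rho$-dependent constant lands on the asserted value. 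In particular the endpoint $\rho=1$ must be treated separately in the $\ell_\infty$ form, and the first few $k$ checked by hand, since the asymptotic power bounds for $\gamma_k(\lambda)$ (and for $\sum_{i\le k}\gamma_i(\lambda)^{-1}$) are loose for small $k$.
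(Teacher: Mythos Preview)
Your approach is essentially the paper's: the same telescoping product for $\gamma_k$ in part a), the same two-sided telescoping estimate on $1/\sqrt{\gamma_k}$ in part b), and the same reduction of $\|\Gamma_k(\lambda,\rho)\|_{2/(1-\rho)}$ to a power sum. Part a) goes through with your alternative bound $\sum_{i\le k}(i+2)^2\le\tfrac{16}{3}k(k+1)(k+2)$ in place of the paper's $(k+2)(k+3)^2/3$; both land on the same $c_2$ after the same final simplification $(k+1)(k+2)\ge k^2$.

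There is one concrete slip in part b). You propose to bound the second quantity in \eqnok{ABL_policy} by multiplying $\bigl(\sum_{i\le k}\gamma_i(\lambda)^{-1}\bigr)^{(1-\rho)/2}$ directly by the already-weakened $\gamma_k(\lambda)\le 4/(\lambda k)^2$. This does not recover the stated $c_2$: with $\gamma_i(\lambda)^{-1}\le[1+(i-1)\lambda]^2$ the sum is of order $[2+\lambda(k-1)]^3/3$, and since $2+\lambda(k-1)>\lambda k$ for $\lambda<1$, the \emph{positive} exponent $3(1-\rho)/2$ works against the factor $(\lambda k)^{-2}$; the best constant this route produces carries $\lambda^{-(1+\rho)}$, strictly worse than the claimed $\lambda^{-(1+3\rho)/2}$ for every $\rho<1$. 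The paper instead keeps the sharper form $\gamma_k(\lambda)\le 4/[2+\lambda(k-1)]^2$ in the product so that the $[2+\lambda(k-1)]$ factors combine to the \emph{negative} power $-(1+3\rho)/2$ first, and only then invokes $2+\lambda(k-1)\ge\lambda k$. This is precisely the ``constant bookkeeping uniformly in $\rho$'' you flagged as the expected obstacle; your outline has all the right ingredients, but the order of the two relaxations (weaken $\gamma_k$ versus pass to $\lambda k$) matters for hitting the stated $\lambda$-exponent.
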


\begin{proof}
Denoting $\gamma_k \equiv \gamma_k(\lambda)$
and $\Gamma_k \equiv \Gamma_k(\lambda, \rho)$,
we first show part a). 
Note that by \eqnok{def_ABL_gamma}, the selection of $\{\alpha_k\}$ 
and the fact that
$\rho \in [0,1]$, we have
\[
\gamma_k = \textcolor{blue}{\frac{6}{(k+1)(k+2)} }\ \ 
\mbox{and} \ \
\gamma_k^{-1} \alpha_k^{1+\rho}  \le \frac{2^{1+\rho}}{6 \,  \lambda^{1+\rho}} (k+2)^{1-\rho}.
\]
Using these relations and the simple observation 
$\sum_{i=1}^k i^2 = k(k+1)(2k+1)/6 
\le k(k+1)^2/3,
$
we conclude that
\beqas
\gamma_k \|\Gamma_{k}\|_\frac{2}{1-\rho}  &\le&  \gamma_k \frac{2^{1+\rho} }{6 \,  \lambda^{1+\rho}}  \left[\sum_{i=1}^k 
(i+2)^2 \right]^\frac{1-\rho}{2} 
\le \gamma_k \frac{2^{1+\rho} }{6 \,  \lambda^{1+\rho}}  \left[ \frac{(k+2)(k+3)^2}{3}\right]^\frac{1-\rho}{2} \\
&=& \frac{2^{1+\rho}}{3^{\frac{1-\rho}{2}} \lambda^{1+\rho}}
 \frac{(k+3)^{1-\rho}}{(k+1) (k+2)^\frac{1+\rho}{2}}
 \le \frac{2^{1+\rho} 4^{1-\rho}}{3^\frac{1-\rho}{2} \lambda^{1+\rho}} k^{-\frac{1+3\rho}{2}}
 = \frac{2^{3-\rho}}{3^\frac{1-\rho}{2} \lambda^{1+\rho}} k^{-\frac{1+3\rho}{2}},
\eeqas
\textcolor{blue}{where the last inequality follows from the facts $k+3 \le 4k$ and $k+2 \ge k+1 \ge k$
for any $k \ge 1$.}

We now show that part b) holds.
Note that by \eqnok{stepsize2-abl}, we have
\beq \label{stepsize2_alpha}
\alpha_k = \frac{1}{2} \left(
-\lambda \gamma_{k-1} + \sqrt{(\lambda \gamma_{k-1})^2 + 4 \gamma_{k-1} } \,
\right), \ \ \ k \ge 2,
\eeq
which clearly implies that $\alpha_k > 0$, $k \ge 2$. 
We now show that $\alpha_k \le 1$ and $\gamma_k \le 1$ by induction.
Indeed, if $\gamma_{k-1} \le 1$, then, by  
\eqnok{stepsize2_alpha}, we have
\beqas
\alpha_k &\le&
\frac{1}{2} \left(
-\lambda \gamma_{k-1} + \sqrt{(\lambda \gamma_{k-1})^2 + 4 \gamma_{k-1} 
+ 4 [1 - (1-\lambda) \gamma_{k-1}]} \,
\right)\\
&=&
 \frac{1}{2} \left( 
-\lambda \gamma_{k-1} + \sqrt{(\lambda\gamma_{k-1})^2 
+ 4 \lambda \gamma_{k-1} + 4  } \right) = 1. 
\eeqas
The previous conclusion, together with the fact that $\alpha_k^2 = \gamma_k$
due to \eqnok{stepsize2-abl}, then also imply that $\gamma_k \le 1$.
Now let us bound $1 / \sqrt{\gamma_k}$ for any $k \ge 2$. 
First observe that, by \eqnok{stepsize2-abl}, we have, for any $k \ge 2$,
\[
\frac{1}{\sqrt{\gamma_k}} - \frac{1}{\sqrt{\gamma_{k-1}}}
= \frac{\sqrt{\gamma_{k-1}} - \sqrt{\gamma_k}}
{\sqrt{\gamma_{k-1} \gamma_k}}
= \frac{\gamma_{k-1} - \gamma_k}{\sqrt{\gamma_{k-1} \gamma_k}
\left( \sqrt{\gamma_{k-1}} + \sqrt{\gamma_k} \right)}
= \frac{ \lambda \alpha_k \gamma_{k-1}}{\gamma_{k-1} \sqrt{\gamma_k}
+ \gamma_k \sqrt{\gamma_{k-1} }}. 
\]
Using the above identity, \eqnok{stepsize2-abl} and the fact that $\gamma_k \le \gamma_{k-1}$ 
due to \eqnok{stepsize2-abl}, we conclude that
\[
\frac{1}{\sqrt{\gamma_k}} - \frac{1}{\sqrt{\gamma_{k-1}}} \ge \frac{\lambda \alpha_k }
{2 \sqrt{\gamma_k}} = \frac{\lambda}{2} \ \ 
\mbox{and} \ \
\frac{1}{\sqrt{\gamma_k}} - \frac{1}{\sqrt{\gamma_{k-1}}} 
\le \frac{\lambda \alpha_k }{\sqrt{\gamma_k}} = \lambda,
\]
which, in view of the fact that $\gamma_1 = 1$, then implies that
$
1+ \lambda (k-1)/2 \le 1/\sqrt{\gamma_k} \le 1+\lambda (k-1).
$
Using the previous inequality and \eqnok{stepsize2-abl}, we conclude that
\[
\gamma_k \le \frac{4}{[2+\lambda (k-1)]^2} \le \frac{4}{\lambda^2 k^2}, \ \ \
\gamma_k^{-1} \alpha_k^{1+\rho} = 
\left(\sqrt{\gamma_k}\right)^{-(1-\rho)} \le [1+\lambda(k-1)]^{1-\rho},
\]
and
\beqas
\gamma_k \|\Gamma_{k}\|_\frac{2}{1-\rho} &\le&   \gamma_k \left[\sum_{i=1}^k 
[1+\lambda(i-1)]^2 \right]^\frac{1-\rho}{2} 
\le \gamma_k  \left( \int_0^{2+\lambda(k-1)} u^2 \, du\right)^\frac{1-\rho}{2} \\
&\le& \frac{4} {3^{\frac{1-\rho}{2}}}  [2+\lambda (k-1)]^{-\frac{1+3\rho}{2}} 
\le \frac{4} {3^{\frac{1-\rho}{2}}} (\lambda k)^{-\frac{1+3\rho}{2}}.
\eeqas
\end{proof}

\vgap

According to the termination criterion in step 4 of procedure~$\ABLGAP$,
each call to this procedure will reduce the gap between a given
upper and lower bound on $f^*$ by a constant factor. In the ABL method
described below, we will iteratively
call procedure~$\ABLGAP$ until a certain accurate solution of problem
\eqnok{cp} is found.

\vgap

\noindent {\bf The ABL method:} 
\begin{itemize}
\item [] {\bf Input:} initial point $p_0 \in X$, tolerance $\epsilon > 0$ \textcolor{blue}{and algorithmic parameter $\lambda \in (0,1)$}.
\item [0)] Set $p_1 \in \Argmin_{x \in X} h(p_0, x)$, $\lb_1 = h(p_0, p_1)$
and $\ub_1 = f(p_1)$. Let $s=1$.
\item [1)] If $\ub_s - \lb_s \le \epsilon$, {\bf terminate};
\item [2)] Set $(p_{s+1}, \lb_{s+1}) = \textcolor{blue}{ \ABLGAP(p_s, \lb_s, \lambda)}$ and $\ub_{s+1} = f(p_{s+1})$;
\item [3)] Set $s = s+1$ and go to step 1.
\end{itemize}

Whenever $s$ increments by $1$, we say that a phase of the ABL method occurs.
Unless explicitly mentioned otherwise, an iteration of
procedure~$\ABLGAP$ is also referred to as an iteration of the ABL method.
The main convergence properties of the above ABL method are summarized as follows.

\begin{theorem} \label{ABL_theorem}
Suppose that $\lambda \in (0,1)$ and $\alpha_k\in (0,1]$, $k = 1, 2, \ldots$,
in procedure~$\ABLGAP$ are chosen such that \eqnok{ABL_policy} holds
for some $c_1, c_2 >0$.
Let $D_X$, $M$ and $\rho$ be given by \eqnok{def_DX} and \eqnok{smoothness}.

\begin{itemize}
\item [a)] The number of phases performed by the ABL method does not exceed
\beq \label{bnd_phase}
S(\epsilon) = \left \lceil \max \left\{0,
\log_{\frac{1}{\lambda}}
 \frac{M D_X^{1+\rho}}{(1+\rho) \epsilon}\right\}
\right \rceil.
\eeq
\item [b)] The total number of iterations
performed by the ABL method can be bounded by
\beq \label{bound_ABL_iter}
\left(1 + \sqrt{\frac{2 c_1}
{\lambda} }\right) \, S(\epsilon)+
\frac{1}{1 - \lambda^\frac{2}{1+3\rho}}
\left(
\frac{2 c_2 M D_X^{1+\rho}}{(1+\rho)\epsilon}
\right)^\frac{2}{1+3\rho} .
\eeq
\end{itemize}
\end{theorem}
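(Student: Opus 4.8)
The plan is to track the optimality gap $\Delta^{(s)} := \ub_s - \lb_s$ from phase to phase and exploit its geometric decay. First I would observe that the ABL method is well defined: since $\lambda$ and $\{\alpha_k\}$ satisfy \eqnok{ABL_policy}, Theorem~\ref{ABL_sum} guarantees that each call to $\ABLGAP$ terminates, in at most $K_{ABL}(\Delta_0)$ iterations, where $\Delta_0$ denotes the optimality gap passed into it. Next I would check that $\ub_s = f(p_s)$ and $\lb_s \le f^* \le \ub_s$ for every $s\ge 1$: this holds at $s=1$ by construction, and at a generic phase the output $(p_{s+1},\lb_{s+1}) = \ABLGAP(p_s,\lb_s,\lambda)$ has $\ub_{s+1} = f(p_{s+1}) = \overline f_k$ and $\lb_{s+1} = \underline f_k$ for the terminal index $k$ of the procedure, while inside $\ABLGAP$ we already know $\underline f_k \le f^* \le \overline f_k$. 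Since $\ABLGAP(p_s,\lb_s,\lambda)$ is started with initial gap $\Delta_0 = f(p_s) - \lb_s = \ub_s - \lb_s = \Delta^{(s)}$, its step~4 termination criterion yields $\Delta^{(s+1)} = \overline f_k - \underline f_k \le \lambda\,\Delta^{(s)}$, and hence $\Delta^{(s)} \le \lambda^{s-1}\,\Delta^{(1)}$ for all $s$.

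I would then bound the initial gap. Because $p_1 \in \Argmin_{x\in X} h(p_0,x)$ and $\lb_1 = h(p_0,p_1) = f(p_0) + \langle f'(p_0), p_1 - p_0\rangle$, we have $\Delta^{(1)} = f(p_1) - f(p_0) - \langle f'(p_0), p_1 - p_0\rangle$, which by the defining inequality \eqnok{smoothness} together with $\|p_1 - p_0\|\le D_X$ is at most $M D_X^{1+\rho}/(1+\rho)$. Combining with the decay above, $\Delta^{(s)} \le \lambda^{s-1} M D_X^{1+\rho}/(1+\rho)$. The method stops at the first phase with $\Delta^{(s)} \le \epsilon$; solving $\lambda^{s-1} M D_X^{1+\rho}/(1+\rho) \le \epsilon$ for the least such $s$, and accounting for the possibility that the initial gap already lies below $\epsilon$, yields precisely the bound $S(\epsilon)$ of \eqnok{bnd_phase}, establishing part a).

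For part b) I would sum the per-phase iteration counts, being careful not to bound each one by its worst-case value at $\Delta^{(s)}=\epsilon$. Let $S\le S(\epsilon)$ be the number of phases actually performed, so that $\Delta^{(s)} > \epsilon$ for $s=1,\ldots,S$. Iterating $\Delta^{(s+1)}\le\lambda\Delta^{(s)}$ backwards from $s=S$ gives $\Delta^{(s)} \ge \lambda^{-(S-s)}\Delta^{(S)} > \lambda^{-(S-s)}\epsilon$, i.e., the gaps dominate a geometric sequence ending at $\epsilon$. By Theorem~\ref{ABL_sum} and $\lceil t\rceil\le t+1$, phase $s$ uses at most $1 + \sqrt{2c_1(1-\lambda\alpha_1)/\lambda} + \big(2c_2 M D_X^{1+\rho}/((1+\rho)\Delta^{(s)})\big)^{2/(1+3\rho)}$ iterations. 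Substituting the lower bound on $\Delta^{(s)}$, the last term is at most $\lambda^{2(S-s)/(1+3\rho)}\big(2c_2 M D_X^{1+\rho}/((1+\rho)\epsilon)\big)^{2/(1+3\rho)}$; summing over $s=1,\ldots,S$ turns it into a geometric series bounded by $(1-\lambda^{2/(1+3\rho)})^{-1}\big(2c_2 M D_X^{1+\rho}/((1+\rho)\epsilon)\big)^{2/(1+3\rho)}$, while the remaining constant-per-phase terms sum to at most $\big(1+\sqrt{2c_1/\lambda}\big)S(\epsilon)$ after using $1-\lambda\alpha_1\le 1$ and $S\le S(\epsilon)$. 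Adding the two contributions gives \eqnok{bound_ABL_iter}.

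The one point that must be handled with care---less a genuine obstacle than a matter of organizing the estimate---is exactly this last step: one must use that the optimality gaps $\Delta^{(s)}$ themselves shrink geometrically, so that the computationally expensive $\ABLGAP$ calls are only the final few, which is what collapses $\sum_s K_{ABL}(\Delta^{(s)})$ into a single geometric series with the optimal leading term $\big(2c_2 M D_X^{1+\rho}/((1+\rho)\epsilon)\big)^{2/(1+3\rho)}$ rather than that term multiplied by an extra factor $S(\epsilon)$.
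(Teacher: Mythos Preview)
Your proposal is correct and follows essentially the same route as the paper: you establish the geometric decay $\Delta^{(s+1)}\le\lambda\,\Delta^{(s)}$, bound the initial gap $\Delta^{(1)}\le M D_X^{1+\rho}/(1+\rho)$ via \eqnok{smoothness}, and then use the key backward estimate $\Delta^{(s)}>\epsilon\,\lambda^{s-S}$ to turn $\sum_s K_{ABL}(\Delta^{(s)})$ into a geometric series. The paper's proof is organized identically (with $\delta_s$ in place of your $\Delta^{(s)}$ and $\bar s$ in place of your $S$), and your closing remark about why the geometric lower bound on $\Delta^{(s)}$ is the crucial point is exactly right.
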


\begin{proof}
Denote $\delta_s \equiv \ub_s - \lb_s$, $s \ge 1$. 
Without loss of generality, we assume that $\delta_1 > \epsilon$, 
since otherwise the statements are obviously true.
Note that by the origin of $\ub_s$ and $\lb_s$, we have 
\beq \label{de_delta}
\delta_{s+1} \le \lambda \delta_s, \ \ s \ge 1.
\eeq
Also note that, by \eqnok{smoothness}, \eqnok{def_DX} and the definition of $p_1$
in the ABL method, we have
\beq \label{bound_delta0}
\delta_{1} = f(p_1) - h(p_0, p_1) = f(p_1) - \left[ f(p_0) + \langle f'(p_0),
p_1 - p_0 \rangle \right] \le \frac{M \|p_1 - p_0\|^{1+\rho}}{1+\rho} \le \frac{M D_X^{1+\rho}}{1+\rho}.
\eeq
The previous two observations then clearly imply that the number of phases performed by
the ABL method is bounded by \eqnok{bnd_phase}. 
We now bound the total number of iterations performed by the ABL method. 
Suppose that procedure $\ABLGAP$ has been called
$\bar s$ times for some $1\le \bar s \le S(\epsilon)$.
It then follows from \eqnok{de_delta} that
$\delta_{s} > \epsilon \lambda^{s-\bar s}$, $s=1, \ldots, \bar s$, since
$\delta_{\bar s} > \epsilon$ due to the origin of $\bar s$.
Using this observation, we obtain
\[
\sum_{s=1}^{\bar s} \delta_{s}^{-\frac{2}{1+3\rho}}
 < \sum_{s=1}^{\bar s} \frac{ \lambda^{\frac{2}{1+3\rho}(\bar s  -s)}}{\epsilon^{\frac{2}{1+3\rho}} }
 = \sum_{t=0}^{\bar s-1} \frac{ \lambda^{\frac{2}{1+3\rho}t}}{\epsilon^{\frac{2}{1+3\rho}} }
   \le \frac{1}{(1 - \lambda^\frac{2}{1+3\rho}) \epsilon^\frac{2}{1+3\rho}
 }.
\]
Moreover, by Theorem~\ref{ABL_sum}, the total number of iterations performed by the ABL method is bounded by
\[
\sum_{s=1}^{\bar s} K_{ABL}(\delta_s) \le \left(1 + \sqrt{\frac{2 c_1}{\lambda}}\right) \bar s 
+\sum_{s=1}^{\bar s} \left(
\frac{2 c_2 M D_X^{1+\rho}}{(1+\rho)\delta_s}
\right)^\frac{2}{1+3\rho} \\
\]
Our result then immediately follows by combining the above two inequalities.
\end{proof}

\vgap

We now add a few remarks about Theorem~\ref{ABL_theorem}.
Firstly, 
by setting $\rho = 0$, $\rho =1$ and $\rho \in (0,1)$ in \eqnok{bound_ABL_iter}, respectively, we obtain
the optimal iteration complexity for nonsmooth, smooth and weakly smooth
convex optimization (see \cite{Lan10-3,NemNes85-1,Nest88-1,DeGlNe10-1} for a discussion about the lower complexity bounds
for solving these CP problems). 
Secondly, the ABL method achieves these aforementioned optimal complexity bounds 
without requiring the input of any smoothness information, such as whether the problem is smooth or not,
and the specific values for $\rho$ and $M$ in \eqnok{smoothness}.
To the best of our knowledge, the ABL method seems to be the first
uniformly optimal method for solving smooth, nonsmooth and weakly smooth
CP problems in the literature.
Thirdly, observe that one potential problem for the ABL method is that, as the algorithm proceeds,
the model $m_k(x)$ accumulates cutting planes, 
and the subproblems in procedure~$\ABLGAP$ become more difficult to solve. 
We will address this issue in Section~\ref{sec-APL} by developing
a variant of the ABL method.

\subsection{Convergence analysis of the ABL gap reduction procedure} \label{sec-ABL-analysis}
Our goal in this subsection is to prove Theorem~\ref{ABL_sum}, which describes
some important convergence properties of procedure~$\ABLGAP$.
We will first establish three technical results from which Theorem~\ref{ABL_sum} immediately follows. 

\textcolor{blue}{
Lemma~\ref{ABL-cluster} below shows that the prox-centers $\{x_k\}$ for procedure~$\ABLGAP$ are ``close'' to each other,
in terms of $\sum_{i = 1}^{k} \|x_{i-1} - x_{i}\|^2$. It follows
the standard analysis of the BL method (see, e.g., \cite{LNN,BenNem00}).}

\begin{lemma} \label{ABL-cluster}
Let $m_i(x)$ and $l_i$, $i = 1, \ldots, k$, respectively, be computed in step 1 and step 2 of procedure~$\ABLGAP$
before it terminates. Then the level sets given by
$
{\cal L}_{i} := \left\{ x \in X: m_{i}(x) \le l_i \right\}, i = 1, 2, \ldots,k, 
$
have a point in common. As a consequence, we have
\beq \label{ABL_sum_dist}
\sum_{i = 1}^{k} \|x_{i-1} - x_{i}\|^2 \le D_X^2,
\eeq
where $D_X$ is defined in \eqnok{def_DX}.
\end{lemma}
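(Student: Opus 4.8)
The plan is to show first that the level sets $\mathcal{L}_1,\mathcal{L}_2,\ldots,\mathcal{L}_k$ have a common point, and then to deduce the distance bound \eqnok{ABL_sum_dist} from a telescoping argument based on the fact that each $x_i$ is the projection of $x_{i-1}$ onto a convex set containing that common point.

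\textbf{Step 1: the level sets are nested, hence share a point.}
First I would observe that the cutting-plane models are monotone, $m_1(x) \le m_2(x) \le \cdots \le m_k(x)$ for all $x \in X$, by the definition $m_i(x) = \max\{m_{i-1}(x), h(x_i^l, x)\}$ in step 1. Next I would examine how the level parameters $l_i$ behave. Recall $l_i = \lambda \underline{f}_i + (1-\lambda)\overline{f}_{i-1}$. Since the lower bounds $\underline{f}_i$ are nondecreasing and the upper bounds $\overline{f}_i$ are nonincreasing (both noted in \eqnok{better_appr} and the discussion after it), one checks that, as long as the procedure has not terminated, i.e., $\Delta_{i-1} > \lambda \Delta_0$, the level $l_i$ stays high enough relative to the optimal value so that the minimizer $x_i^*$ of $m_k$ over $X$ — or more precisely any point achieving $\underline{f}_k = \min_X m_k$ — lies in $\mathcal{L}_i$ for every $i \le k$. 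The cleanest route: let $x^* \in \Argmin_{x\in X} m_k(x)$, so $m_k(x^*) = h_k^* \le \underline{f}_k$. By monotonicity of the models, $m_i(x^*) \le m_k(x^*) \le \underline{f}_k$ for all $i \le k$. Since $\underline{f}_i \le \underline{f}_k$ and (by the non-termination condition in step 4) $l_i = \lambda\underline{f}_i + (1-\lambda)\overline{f}_{i-1} \ge \underline{f}_i + \text{something nonnegative} \ge \underline{f}_k$ — here one uses $\overline{f}_{i-1} \ge \overline{f}_{k} \ge f^* \ge \underline{f}_k$ — we get $m_i(x^*) \le l_i$, i.e. $x^* \in \mathcal{L}_i$ for all $i$. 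Hence $x^* \in \bigcap_{i=1}^k \mathcal{L}_i$; in particular every $\mathcal{L}_i$ is nonempty so the subproblems \eqnok{def_ABL_xt} are feasible.

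\textbf{Step 2: telescoping via projection.}
Having fixed a common point $\hat x \in \bigcap_{i=1}^k \mathcal{L}_i$, I would use that $x_i$ is the Euclidean projection of $x_{i-1}$ onto the closed convex set $\mathcal{L}_i \subseteq X$. The standard projection inequality (obtuse-angle property) gives $\langle x_{i-1} - x_i, \hat x - x_i\rangle \le 0$, whence
\[
\|x_i - \hat x\|^2 + \|x_{i-1} - x_i\|^2 \le \|x_{i-1} - \hat x\|^2.
\]
Summing over $i = 1,\ldots,k$ telescopes the $\|x_i - \hat x\|^2$ terms and yields
\[
\sum_{i=1}^k \|x_{i-1} - x_i\|^2 \le \|x_0 - \hat x\|^2 - \|x_k - \hat x\|^2 \le \|x_0 - \hat x\|^2 \le D_X^2,
\]
the last step because $x_0, \hat x \in X$ and $D_X = \max_{x,y\in X}\|x-y\|$ by \eqnok{def_DX}. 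This completes the argument.

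\textbf{Main obstacle.}
The routine part is Step 2 — it is just the Pythagorean projection identity plus a telescoping sum. The delicate part is Step 1, specifically verifying that the level $l_i$ is never set below a value that keeps the common minimizer $x^*$ feasible for $\mathcal{L}_i$; this is exactly where the non-termination hypothesis ("before it terminates") and the monotonicity of $\underline f_i, \overline f_i$ enter, and one has to be careful about whether to compare $l_i$ against $\underline f_k$, $h_k^*$, or $f^*$. I would make sure the chain $m_i(x^*) \le m_k(x^*) = h_k^* \le \underline f_k \le \underline f_i \;?\; l_i$ closes — noting $\underline f_i \le \underline f_k$ goes the wrong way, so the correct comparison is $m_i(x^*) \le \underline f_k \le \overline f_{i-1}$ combined with $l_i \ge \lambda \underline f_i + (1-\lambda)\underline f_k \ge \underline f_k \ge m_i(x^*)$ once one knows $\underline f_i \le \underline f_k$ is harmless because the convex combination $l_i$ is still $\ge \underline f_k$ as $\overline f_{i-1} \ge \underline f_k$. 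Getting this inequality bookkeeping exactly right is the one place the proof can go wrong.
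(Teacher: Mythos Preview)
Your Step~2 is correct and matches the paper's argument exactly: once a common point $\hat x$ is found, the projection inequality plus telescoping gives \eqnok{ABL_sum_dist}.

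Step~1, however, has a real gap at precisely the point you flag as the obstacle. You want to show $m_i(x^*) \le l_i$. Your chain gives $m_i(x^*) \le h_k^* \le \underline f_k$, and then you need $\underline f_k \le l_i$. You argue that $l_i = \lambda \underline f_i + (1-\lambda)\overline f_{i-1} \ge \lambda \underline f_i + (1-\lambda)\underline f_k$ (using $\overline f_{i-1} \ge \underline f_k$), and then assert this last quantity is $\ge \underline f_k$. But that inequality is \emph{backwards}: since $\underline f_i \le \underline f_k$, the convex combination $\lambda \underline f_i + (1-\lambda)\underline f_k$ is at most $\underline f_k$, not at least. So the chain does not close, and the bounds $\overline f_{i-1} \ge \underline f_k$, $\underline f_i \le \underline f_k$ alone are insufficient.

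What you are missing is the quantitative use of the non-termination condition. ``Before it terminates'' means $\Delta_k > \lambda \Delta_0$ (and $\Delta_i \le \Delta_0$ for all $i$). The paper uses this to write
\[
m_i(u) \le \underline f_k = \overline f_k - \Delta_k < \overline f_k - \lambda \Delta_0 \le \overline f_i - \lambda \Delta_0 \le \overline f_i - \lambda \Delta_i = (1-\lambda)\overline f_i + \lambda \underline f_i \le l_i.
\]
The strict inequality $\Delta_k > \lambda \Delta_0$ is exactly what bridges the gap between $\underline f_k$ and $l_i$; you mention the non-termination condition in passing but never actually invoke it. Once you insert this step, your proof becomes essentially the paper's.
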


\begin{proof}
Let $\Delta_k$, $k = 0, 1, \ldots$,  be defined in \eqnok{def_Delta}.
First, in view of \eqnok{non-increasing-gap0} and the termination criterion of procedure~$\ABLGAP$, 
we have
\beq \label{non-increasing-gap}
\Delta_{0} \ge \Delta_{1} \ge \ldots \ge \Delta_{k} > \lambda \Delta_0.
\eeq
Now let $u \in \Argmin_{x \in X} m_k(x)$. 
Observe that, by \eqnok{def_ABL_LB}, \eqnok{better_appr} and \eqnok{non-increasing-gap},
we have, for any $i = 1, 2, \ldots, k$,
\beqas
m_i(u) &\le& m_k(u) = \underline{f}_k = \overline{f}_k - \Delta_k <
\overline{f}_k - \lambda \Delta_0 \le \overline{f}_{i} - \lambda
\Delta_0 \\
&\le& \overline{f}_{i} - \lambda \Delta_{i} = (1-\lambda) \, \overline{f}_i
+ \lambda \, \underline{f}_i \le (1-\lambda)\, \overline{f}_{i-1} + \lambda \, \underline{f}_i = l_{i}.
\eeqas
We have thus shown that $u \in {\cal L}_{i}$ for any $i= 1, 2, \ldots, k$.
Now by \eqnok{def_ABL_xt},
we have
\[
\|x_i- u\|^2 + \|x_{i-1} - x_i\|^2 \le \|x_{i-1}-u\|^2,
i = 1, 2, \ldots, k.
\]
Summing up the above inequalities and using \eqnok{def_DX},
we obtain
\[
\|x_k-u\|^2 + \sum_{i=1}^k \|x_{i-1} - x_i\|^2 \le \|x_{0}-u\|^2
\le D_X^2,
\]
which clearly implies \eqnok{ABL_sum_dist}.
\end{proof}

\vgap

\textcolor{blue}{
The following two technical results will be used in the convergence
analysis for a few accelerated bundle-level type methods,
including ABL, APL and USL, developed in this paper.}

\textcolor{blue}{
\begin{lemma} \label{three_points}
Let $(x_{k-1}, x^u_{k-1}) \in X \times X$ be given at
the $k$-th iteration, $k \ge 1$, of an iterative scheme
and denote $x^l_k = \alpha_k x_{k-1} + (1-\alpha_k) x^u_{k-1}$.
Also let $h(z, \cdot)$ be defined in \eqnok{def_Linear_Model} and suppose that the pair 
of new search points $(x_k, \tilde x^u_k) \in X \times X$ 
satisfy that, for some $l \in \bbr$ and $\alpha_k \in (0,1]$,
\begin{align}
h(x^l_k, x_k) \le l, \label{rel2}\\
\tilde x^u_k = \alpha_t x_k + (1-\alpha_k) x^u_{k-1}. \label{rel3}
\end{align}
Then, 
\beq \label{recursion}
f(\tilde x^u_k) \le (1 - \alpha_k) f(x^u_{k-1})  + \alpha_k l +  
\frac{M}{1+\rho} \|\alpha_k (x_k - x_{k-1})\|^{1+\rho}.
\eeq
\end{lemma}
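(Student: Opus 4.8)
The plan is to prove the recursion~\eqnok{recursion} by combining the smoothness condition~\eqnok{smoothness} applied at the point $x^l_k$ with the convexity of $f$, exploiting the fact that $\tilde x^u_k - x^l_k = \alpha_k(x_k - x_{k-1})$. First I would write down~\eqnok{smoothness} with $x = x^l_k$ and $y = \tilde x^u_k$, which gives
\beq
f(\tilde x^u_k) \le f(x^l_k) + \langle f'(x^l_k), \tilde x^u_k - x^l_k \rangle + \frac{M}{1+\rho}\|\tilde x^u_k - x^l_k\|^{1+\rho}.
\eeq
Using the definitions $x^l_k = \alpha_k x_{k-1} + (1-\alpha_k)x^u_{k-1}$ from the hypothesis and $\tilde x^u_k = \alpha_k x_k + (1-\alpha_k)x^u_{k-1}$ from~\eqnok{rel3}, we compute $\tilde x^u_k - x^l_k = \alpha_k(x_k - x_{k-1})$, so the last term becomes exactly $\tfrac{M}{1+\rho}\|\alpha_k(x_k - x_{k-1})\|^{1+\rho}$, matching the desired bound.

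Next I would handle the linear part $f(x^l_k) + \langle f'(x^l_k), \tilde x^u_k - x^l_k\rangle$. The key observation is that $\tilde x^u_k - x^l_k = \alpha_k(x_k - x_{k-1})$ can be rewritten using a convex-combination identity: since $x^l_k = \alpha_k x_{k-1} + (1-\alpha_k)x^u_{k-1}$, we have
\beq
\tilde x^u_k - x^l_k = \alpha_k(x_k - x^l_k) + (1-\alpha_k)(x^u_{k-1} - x^l_k).
\eeq
Therefore the linear part splits as
\beq
f(x^l_k) + \alpha_k\langle f'(x^l_k), x_k - x^l_k\rangle + (1-\alpha_k)\langle f'(x^l_k), x^u_{k-1} - x^l_k\rangle = \alpha_k\, h(x^l_k, x_k) + (1-\alpha_k)\, h(x^l_k, x^u_{k-1}),
\eeq
using the definition~\eqnok{def_Linear_Model} of $h$ and the fact that $\alpha_k + (1-\alpha_k) = 1$ lets us distribute the $f(x^l_k)$ term. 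Now I would apply the two bounds: $h(x^l_k, x_k) \le l$ by hypothesis~\eqnok{rel2}, and $h(x^l_k, x^u_{k-1}) \le f(x^u_{k-1})$ by convexity of $f$ (the linearization at $x^l_k$ underestimates $f$ everywhere, in particular at $x^u_{k-1}$). Since $\alpha_k \in (0,1]$ both coefficients are nonnegative, so the linear part is bounded above by $\alpha_k l + (1-\alpha_k)f(x^u_{k-1})$.

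Combining the two estimates yields
\beq
f(\tilde x^u_k) \le (1-\alpha_k)f(x^u_{k-1}) + \alpha_k l + \frac{M}{1+\rho}\|\alpha_k(x_k - x_{k-1})\|^{1+\rho},
\eeq
which is precisely~\eqnok{recursion}. I do not anticipate a real obstacle here; the only point requiring a little care is the algebraic rearrangement showing that $f(x^l_k) + \langle f'(x^l_k), \tilde x^u_k - x^l_k\rangle$ equals the convex combination $\alpha_k h(x^l_k, x_k) + (1-\alpha_k)h(x^l_k, x^u_{k-1})$, and verifying that the nonnegativity of $\alpha_k$ and $1-\alpha_k$ (guaranteed by $\alpha_k \in (0,1]$) justifies applying the upper bounds termwise. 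Everything else is a direct substitution into~\eqnok{smoothness}.
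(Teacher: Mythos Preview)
Your proof is correct and follows essentially the same approach as the paper's: both apply the smoothness bound \eqnok{smoothness} at $x^l_k$, use the identity $\tilde x^u_k - x^l_k = \alpha_k(x_k-x_{k-1})$, split $h(x^l_k,\tilde x^u_k)$ as the convex combination $\alpha_k h(x^l_k,x_k)+(1-\alpha_k)h(x^l_k,x^u_{k-1})$ (the paper invokes the affinity of $h(z,\cdot)$ directly, you expand it explicitly), and then bound the two terms via \eqnok{rel2} and convexity.
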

}
\begin{proof}
It can be 
easily seen from \eqnok{rel3} and the definition of $x_k^l$ that 
\beq \label{temp_rel_abl}
\tilde x^u_k - x^l_k = \alpha_k (x_k - x_{k-1}).
\eeq 
Using this observation, \eqnok{smoothness}, \eqnok{def_Linear_Model}, \eqnok{rel2}, \eqnok{rel3} and
the convexity of $f$, we have
\begin{align}
f(\tilde x^u_k) &\le h(x^l_k,\tilde x^u_k) + \frac{M}{1+\rho} \|\tilde x^u_k - x^l_k\|^{1+\rho} 
& & \textcolor{blue}{\text{(by \eqnok{smoothness} and \eqnok{def_Linear_Model})}}\nn\\
&= (1 - \alpha_k) h(x^l_k, x^u_{k-1}) + \alpha_k h(x^l_k, x_k) + \frac{M}{1+\rho} \|\tilde x^u_k - x^l_k\|^{1+\rho} 
& & \textcolor{blue}{\text{(by \eqnok{rel3})}}\nn\\
&= (1 - \alpha_k) h(x^l_k, x^u_{k-1}) + \alpha_k h(x^l_k, x_k)
+ \frac{M }{1+\rho} \|\alpha_k(x_k - x_{k-1})\|^{1+\rho} \nn
& & \textcolor{blue}{\text{(by \eqnok{temp_rel_abl})}}\\
&\le (1 - \alpha_k) f(x^u_{k-1}) + \alpha_k h(x^l_k, x_k)
+ \frac{M}{1+\rho} \|\alpha_k (x_k - x_{k-1})\|^{1+\rho} 
& & \textcolor{blue}{\text{(by convexity of $f$)}}\nn \\
&\le (1 - \alpha_k) f(x^u_{k-1}) + \alpha_k l
+ \frac{M}{1+\rho} \|\alpha_k(x_k - x_{k-1})\|^{1+\rho}.
& & \textcolor{blue}{\text{(by \eqnok{rel2})}}\nn
\end{align}
\end{proof}

\textcolor{blue}{
\begin{lemma} \label{tech_result_sum} Let $w_k\in (0,1]$, $k = 1, 2, \ldots$, be given. 
Also let us denote
\beq \label{def_Gamma0}
W_k := \left\{
\begin{array}{ll}
1,& k = 1,\\
(1-w_k) \, W_{k-1},& k \ge 2.
\end{array}
\right.
\eeq
Suppose that $W_k > 0$ for all $k \ge 2$ and that the sequence $\{\delta_k\}_{k \ge 0}$ satisfies
\beq \label{general_cond}
\delta_k \le (1 - w_k) \delta_{k-1} + B_k, \ \ \ k = 1, 2, \ldots.
\eeq
Then, we have
$
\delta_k \le W_k (1-w_1) \delta_0 + W_k \sum_{i=1}^k (B_i/W_i).
$
\end{lemma}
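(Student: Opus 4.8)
The plan is a standard "divide by the integrating factor and telescope" argument. The key observation is that, by the definition \eqnok{def_Gamma0}, for every $k \ge 2$ we have $W_k = (1-w_k) W_{k-1}$, so dividing the recursion \eqnok{general_cond} through by $W_k > 0$ turns the factor $(1-w_k)/W_k$ in front of $\delta_{k-1}$ into exactly $1/W_{k-1}$. Concretely, for $k \ge 2$ this gives
\[
\frac{\delta_k}{W_k} \le \frac{(1-w_k)\delta_{k-1}}{W_k} + \frac{B_k}{W_k} = \frac{\delta_{k-1}}{W_{k-1}} + \frac{B_k}{W_k}.
\]
For $k = 1$, since $W_1 = 1$, the recursion \eqnok{general_cond} reads directly $\delta_1 = \delta_1 / W_1 \le (1-w_1)\delta_0 + B_1 = (1-w_1)\delta_0 + B_1/W_1$, which will serve as the base of the telescoping.

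Next I would sum the displayed inequality from $i = 2$ to $i = k$ (for $k \ge 2$), so that the left-hand side telescopes down to $\delta_1/W_1$:
\[
\frac{\delta_k}{W_k} \le \frac{\delta_1}{W_1} + \sum_{i=2}^{k} \frac{B_i}{W_i} \le (1-w_1)\delta_0 + \frac{B_1}{W_1} + \sum_{i=2}^{k} \frac{B_i}{W_i} = (1-w_1)\delta_0 + \sum_{i=1}^{k} \frac{B_i}{W_i},
\]
where the second inequality substitutes the base case bound on $\delta_1/W_1$. Multiplying both sides by $W_k > 0$ yields the claimed estimate $\delta_k \le W_k (1-w_1)\delta_0 + W_k \sum_{i=1}^k (B_i/W_i)$. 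The case $k = 1$ is exactly the base case bound after multiplying by $W_1 = 1$, so the formula holds for all $k \ge 1$.

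There is no real obstacle here; the only point requiring a little care is that the recursion for $W_k$ is only valid for $k \ge 2$, so the $k = 1$ term must be peeled off and handled separately (using $W_1 = 1$) rather than being absorbed into the telescoping sum. One could equivalently phrase the whole argument as an induction on $k$, with the same two-case split, but the telescoping presentation is cleaner. Note also that the hypothesis $w_k \in (0,1]$ together with $W_k > 0$ for $k \ge 2$ is exactly what is needed to guarantee all the divisions above are legitimate.
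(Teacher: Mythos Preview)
Your proof is correct and follows essentially the same approach as the paper: divide \eqnok{general_cond} by $W_k$, treat the case $k=1$ separately (using $W_1=1$), telescope the resulting inequalities for $k\ge 2$, and multiply back by $W_k$. The paper's write-up is terser but the argument is identical.
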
 }

\begin{proof}
Dividing both sides of \eqnok{general_cond} by $W_k$, we obtain
\[
\frac{\delta_1}{W_1} \le \frac{(1-w_1) \delta_{0}}{W_{1}} + \frac{B_1}{W_1}
\]
and
\[
\frac{\delta_k}{W_k} \le \frac{\delta_{k-1}}{W_{k-1}} + \frac{B_k}{W_k}, \ \ \ \forall k \ge 2.
\]
The result then immediately follows by summing up the above inequalities
and rearranging the terms.
\end{proof}

\vgap

We are now ready to provide the proof of Theorem~\ref{ABL_sum}.

\noindent {\bf Proof of Theorem~\ref{ABL_sum}:} 
\textcolor{blue}{We first establish an important recursion for procedure~$\ABLGAP$.
Let $\Delta_k = \overline{f}_k - \underline{f}_k$, $k =1,2,\ldots$, be the optimality gap computed
at the $k$-th iteration of this procedure and denote $\tilde x^u_k \equiv \alpha_k x_k + (1-\alpha_k) x^u_{k-1}$.
By the definitions of $x^u_k$ and $\tilde x^u_k$, we have $f(x^u_k) \le f(\tilde x^u_k)$.
Also by \eqnok{def_ABL_xt} and the definition of $m_k(x)$, we have $h(x_k^l, x_k) \le l_k$. 
 Using these observations and Lemma~\ref{three_points} (with $l=l_k$), we conclude that,
 for any $k \ge 1$,}
\beq \label{basic_it}
f(x^u_k) \le f(\tilde x^u_k) \le (1 - \alpha_k) f(x^u_{k-1}) + \alpha_k l_k
+ \frac{M}{1+\rho} \|\alpha_k(x_k - x_{k-1})\|^{1+\rho}.
\eeq
Subtracting $\underline{f}_{k}$ from both sides of the above inequality,
and observing that $f(x^u_k) - \underline{f}_{k}
= \overline{f}_{k} - \underline{f}_{k}  = \Delta_{k}$ and $f(x^u_{k-1}) = \overline{f}_{k-1}$,  
we obtain
\[
\Delta_k \le  (1 - \alpha_k) \overline{f}_{k-1} - \underline{f}_{k} + \alpha_k l_k   + \frac{M}{1+\rho} \|\alpha_k(x_k - x_{k-1})\|^{1+\rho}, \ \ \forall k \ge 1.
\]
Also note that
\beqas
(1 - \alpha_k) \overline{f}_{k-1} - \underline{f}_{k} + \alpha_k l_k &=& (1 - \alpha_k) \overline{f}_{k-1} - \underline{f}_{k} + \alpha_k \left[
\lambda \, \underline{f}_{k} + (1-\lambda) \, \overline{f}_{k-1}\right] \\
&=& (1-\alpha_k) \overline{f}_{k-1} + \alpha_k (1-\lambda) \, \overline{f}_{k-1}
-(1 - \lambda \,\alpha_k ) \underline{f}_{k} \\
&\le& (1-\alpha_k) \overline{f}_{k-1} + \alpha_k (1-\lambda) \, \overline{f}_{k-1}
-(1 - \lambda \, \alpha_k ) \underline{f}_{k-1} \\
&=& (1 - \lambda \, \alpha_k ) \Delta_{k-1},
\eeqas
where the inequality follows from the fact that $\underline{f}_{k} \ge \underline{f}_{k-1}$.
Combining the above two inequalities, we arrive at
\beq \label{ABL_it}
\Delta_{k} \le (1 - \lambda \, \alpha_k) \Delta_{k-1}
+ \frac{M}{1+\rho} \|\alpha_k(x_{k} - x_{k-1})\|^{1+\rho}, \ \ \forall \ k \ge 1.
\eeq

\textcolor{blue}{Next, let $\gamma_k \equiv \gamma_k(\lambda)$
and $\Gamma_k \equiv \Gamma_k(\lambda, \rho)$ be 
defined in \eqnok{def_ABL_gamma} and \eqnok{def_ABL_gamma1},
respectively. Using \eqnok{ABL_it} and Lemma~\ref{tech_result_sum}
(with $\delta_k = \Delta_k$, $w_k = 1 - \lambda \alpha_k$, $W_k = \gamma_k$ and $B_k = M\|\alpha_k(x_{k} - x_{k-1})\|^{1+\rho}/(1+\rho)$),
we obtain  }
\beqa
\Delta_k &\le& \gamma_k (1 - \lambda \alpha_1) \Delta_0 + \frac{M}{1+\rho} \sum_{i=1}^k \left[\gamma_i^{-1}
\|\alpha_i(x_{i} - x_{i-1})\|^{1+\rho}  \right] \nn \\
&\le& \gamma_k (1 - \lambda \alpha_1) \Delta_0 + \frac{M}{1+\rho}
\|\Gamma_{k}\|_{\frac{2}{1-\rho}}
\left[\sum_{i=1}^{k} \|x_i - x_{i-1}\|^2\right]^\frac{1+\rho}{2},
\eeqa
where the last relation follows from H\"{o}lder's inequality.
The previous conclusion, in view of \eqnok{ABL_sum_dist}, then clearly
implies \eqnok{Recursion_ABL}. 

Now let us denote $K = K_{ABL}(\Delta_0)$.
It then follows from \eqnok{Recursion_ABL}, \eqnok{ABL_policy}
and \eqnok{ABL_GAP_BND} that
\[
\Delta_K \le c_1 (1-\lambda \alpha_1) K^{-2} \Delta_0 + \frac{c_2 M D_X^{1+\rho}}{1+\rho} K^{-\frac{1+3\rho}{2}}
\le \frac{\lambda}{2} \Delta_0 + \frac{\lambda}{2} \Delta_0 = \lambda \Delta_0,
\]
and hence that procedure~$\ABLGAP$ will terminate in at most $K$ iterations.
\endproof

\setcounter{equation}{0}
\section{The accelerated prox-level method} \label{sec-APL}
One critical issue for the ABL method is that,
as the algorithm proceeds, its subproblems, namely problem
\eqnok{def_ABL_LB} and \eqnok{def_ABL_xt}, accumulate constraints and thus
become more and more difficult to solve. 
Our goal in this section is to present a variant of the ABL method, 
namely the accelerated prox-level method (APL),
which can still uniformly achieve the optimal iteration complexity
for solving smooth, weakly smooth and nonsmooth CP
problems, but has significantly reduced iteration cost than that of the ABL method.
In addition, throughout this section, we assume that $\bbr^n \supseteq X$
is equipped with an arbitrary norm $\|\cdot\|$ (not necessarily 
associated with the inner product) and $\|\cdot\|_*$ denotes its conjugate.
We will employ non-Euclidean prox-functions in the APL algorithm 
to make use of the geometry of the feasible set $X$,
similarly to the NERML algorithm in \cite{BenNem05-1,BenNem00}. 


We first need to introduce a new way to construct lower bounds on $f^*$. 
Let $\levelset_{f}(l)$ denote the level set of $f$ given by 
\beq \label{def_level_set}
\levelset_{f}(l) := \left \{ x \in X: f(x) \le l\right\}.
\eeq
Also for some $z \in X$, let $h(z, x)$ be the cutting plane defined in \eqnok{def_Linear_Model}
and denote
\beq \label{def_bar_h}
\bar{h} := \min \left\{ h(z, x): x \in \levelset_{f}(l) \right\}.
\eeq
Then, it is easy to verify that 
\beq \label{def_tmp_bnd}
\min\{l, \bar{h}\} \le f(x), \ \ \ \forall x \in X.
\eeq
Indeed, if $l \le f^*$, then $\levelset_{f}(l) = \emptyset$, $\bar{h} = + \infty$
and $\min\{l,\bar{h}\} = l$. Hence \eqnok{def_tmp_bnd} is obviously true.
Now consider the case $l > f^*$. Clearly, for an arbitrary optimal solution $x^*$ of \eqnok{cp},
we have $x^* \in \levelset_{f}(l)$.
Moreover, by \eqnok{def_Linear_Model}, \eqnok{def_bar_h} and 
the convexity of $f$, we have \textcolor{blue}{$\bar{h} \le h(z,x) \le f(x)$} for 
any $x \in \levelset_{f}(l)$. Hence, $\bar{h} \le f(x^*) = f^*$
and thus \eqnok{def_tmp_bnd} holds.

Note, however, that to solve problem \eqnok{def_bar_h} is usually
as difficult as to solve the original problem~\eqnok{cp}. 
To compute a convenient lower bound of $f^*$,
we replace $\levelset_{f}(l)$ in \eqnok{def_bar_h} with a
convex and compact set $X'$ satisfying
\beq \label{def_localizer}
\levelset_{f}(l) \subseteq X' \subseteq X.
\eeq
The set $X'$ will be referred to as 
a {\sl localizer} of the level set $\levelset_{f}(l)$.
The following result shows the computation of
a lower bound on $f^*$ by solving such a relaxation
of \eqnok{def_bar_h}.

\begin{lemma} \label{lemma_lower_bnd}
Let $X'$ be a localizer of the level set
$\levelset_{f}(l)$ for some $l\in \bbr$ and
$h(z, x)$ be defined in \eqnok{def_Linear_Model}. Denote
\beq \label{def_h_star}
\underline{h} := \min \left\{ h(z, x): x \in X' \right\}.
\eeq
We have
\beq \label{def_lower_bnd}
\min\{l, \underline{h}\} \le f(x), \ \ \ \forall x \in X.
\eeq
\end{lemma}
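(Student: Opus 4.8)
\textbf{Proof proposal for Lemma~\ref{lemma_lower_bnd}.}
The plan is to observe that this statement is just \eqnok{def_tmp_bnd} with the feasible set of the inner minimization enlarged from $\levelset_f(l)$ to a superset $X'$, together with the elementary fact that enlarging a feasible set can only decrease the optimal value. Concretely, since $\levelset_f(l)\subseteq X'$ by \eqnok{def_localizer}, the minimum in \eqnok{def_h_star} is taken over a larger set than the minimum in \eqnok{def_bar_h}, so $\underline h\le \bar h$. Because $t\mapsto\min\{l,t\}$ is nondecreasing, this gives $\min\{l,\underline h\}\le\min\{l,\bar h\}$, and \eqnok{def_tmp_bnd} already guarantees $\min\{l,\bar h\}\le f(x)$ for all $x\in X$. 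Chaining these inequalities yields \eqnok{def_lower_bnd}.

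I would, however, prefer to write the argument in a self-contained two-case form, so that the lemma does not lean on the intermediate quantity $\bar h$ (and so the degenerate case is handled transparently). \textbf{Case 1:} $l\le f^*$. Then for every $x\in X$ we have $\min\{l,\underline h\}\le l\le f^*\le f(x)$, and we are done. \textbf{Case 2:} $l>f^*$. Let $x^*$ be any optimal solution of \eqnok{cp}; then $f(x^*)=f^*<l$, so $x^*\in\levelset_f(l)\subseteq X'$. By \eqnok{def_Linear_Model} and convexity of $f$ we have the subgradient inequality $h(z,x)=f(z)+\langle f'(z),x-z\rangle\le f(x)$ for all $x\in X$; in particular, taking the minimum over $X'$ in \eqnok{def_h_star}, $\underline h\le h(z,x^*)\le f(x^*)=f^*$. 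Hence $\min\{l,\underline h\}\le\underline h\le f^*\le f(x)$ for all $x\in X$. Combining the two cases gives \eqnok{def_lower_bnd}.

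There is essentially no hard step here: the only point requiring a little care is the degenerate situation $\levelset_f(l)=\emptyset$ (equivalently $l<f^*$), where $\underline h$ could be $+\infty$ if $X'$ is also taken empty; the case split above absorbs this, since there the bound reduces to $l\le f^*\le f(x)$. I would note, for emphasis, that the lemma is precisely what makes the construction useful algorithmically: it lets procedure~$\ABLGAP$-type methods replace the intractable level-set minimization \eqnok{def_bar_h} by a tractable relaxation \eqnok{def_h_star} over a polyhedral localizer $X'$ while still producing a valid lower bound on $f^*$.
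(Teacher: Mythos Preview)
Your proposal is correct, and your first paragraph is exactly the paper's proof: the paper notes that $\levelset_f(l)\subseteq X'$ gives $\underline{h}\le\bar h$ and then invokes \eqnok{def_tmp_bnd}, handling the degenerate $X'=\emptyset$ case separately just as you do. Your alternative two-case argument is also fine; it simply unfolds the reasoning behind \eqnok{def_tmp_bnd} directly for $\underline{h}$ rather than passing through $\bar h$, which is a matter of presentation rather than a genuinely different route.
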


\begin{proof}
Note that if $X' = \emptyset$ (i.e., \eqnok{def_h_star}
is infeasible), then $\underline{h} = + \infty$. In this case,
we have $\levelset_{f}(l) = \emptyset$ and $f(x) \ge l$ for any $x \in X$.
Now assume that $X' \neq \emptyset$.
By \eqnok{def_bar_h}, \eqnok{def_localizer} and \eqnok{def_h_star},
we have $\underline{h} \le \bar{h}$, which together with \eqnok{def_tmp_bnd},
then clearly imply \eqnok{def_lower_bnd}.
\end{proof}

\vgap

The second new construct that we will employ in the APL method
is the {\sl prox-function} that generalizes 
the Euclidean distance function $\|\cdot\|^2$ used in the
ABL method (see \eqnok{def_ABL_xt}). More specifically,
consider a convex compact set $X \subseteq \bbr^n$.
A function $\w:X\to \bbr$ is called a prox-function of $X$ with modulus
$\sigma_\w$, if it is differentiable and strongly convex with modulus $\sigma_\w$, i.e.,
\[
\langle \nabla \w(x) - \nabla \w(z), x - z \rangle \ge \sigma_\w \|x-z\|^2, \ \ \
\forall x, z \in X.
\]
Moreover, we denote {\sl the size of $X$ with respect to $\w$} by
\beq \label{def_cal_DX}
\cD_{\w,X}^2 := \max_{x, z \in X} \left\{\w(x) - \w(z) - 
\langle \nabla \w(z), x - z \rangle\right\}.
\eeq
Clearly, we have
\beq \label{diam_X}
\|x - z\|^2 \le \frac{2}{\sigma_\w} \cD_{\w,X}^2 =: \Omega_{\w,X}, \ \ \ \forall x, z \in X.
\eeq 

Similarly to Section~\ref{sec-ABL-alg}, we start by describing  
a new gap reduction procedure, denoted by~$\APLGAP$, which,
for a given search point $p$ and a lower bound $\lb$ on $f^*$, computes a new search point $p^+$ and a new 
lower bound $\lb^+$ satisfying $f(p^+) - \lb^+ \le q \, [f(p) - \lb]$
for some $q \in (0,1)$. \textcolor{blue}{Note that the value of $q$ will depend on the two algorithmic 
input parameters: $\beta, \theta \in (0,1)$}.

\vgap

\noindent{\bf The APL gap reduction procedure:  $(p^+, \lb^+) = \APLGAP(p, \lb, \textcolor{blue}{\beta, \theta})$} 
\begin{itemize}
\item [0)] Set $x^u_0 = p$, $\overline{f}_0 = f(x^u_0)$, $\underline{f}_0 = \lb$ and
$l = \beta \underline{f}_0 + (1 - \beta) \overline{f}_0$. Also let $x_0 \in X$ and the initial localizer $X'_0$
be arbitrarily chosen, say $x_0 = p$ and $X'_0 = X$.
Set the prox-function $d_\w(x) = \w(x) - [\w(x_0) + \langle \w'(x_0), x - x_0\rangle ]$.
Also let $k=1$. 
\item [1)] {\sl Update lower bound:} set $x^l_k = (1-\alpha_k) x^u_{k-1}+\alpha_k x_{k-1}$,  
$h(x^l_k, x) = f(x^l_k) + \langle f'(x^l_k), x - x^l_k \rangle$,
\beq \label{def_lb}
\underline{h}_k := \min_{x \in X'_{k-1}} \left\{ h(x^l_k, x) \right\}, \ \ \mbox{and} \ \ \
\underline{f}_k := \max\left\{\underline{f}_{k-1}, \min\{l, \underline{h}_k\}\right\}. 
\eeq
If $\underline{f}_k \ge l - \theta (l - \underline{f}_0)$,
then {\bf terminate} the procedure with $p^+ = x^u_{k-1}$ and $\lb^+ = \underline{f}_k$;
\item [2)] {\sl Update prox-center:} set 
\beq \label{prox_step}
x_k := \argmin_{x \in X'_{k-1}} \left\{ d_\w(x): h(x^l_k, x) \le l \right\};
\eeq
\item [3)] {\sl Update upper bound:} set \textcolor{blue}{$\bar f_k = \min \{
\bar{f}_{k-1}, f(\alpha_k x_k + (1-\alpha_k) x^u_{k-1}) \}$}, and choose $x^u_k$
such that $f(x^u_k) = \bar f_k$.
If $\overline{f}_k \le l + \theta (\overline{f}_0 - l)$, then 
{\bf terminate} the procedure with $p^+ = x^u_{k}$ and $\lb^+ = \underline{f}_k$;
\item [4)] {\sl Update localizer:} choose an arbitrary $X'_k$ such that
$
\underline{X}_k \subseteq X'_k \subseteq \overline{X}_k,
$
where
\beq \label{bound_X}
\underline{X}_k := \left\{ x \in X'_{k-1}: h(x^l_k, x)
\le l \right\} \ \ \mbox{and} \ \
\overline{X}_k := \left\{ x \in X: \langle \nabla d_\w(x_{k}), 
x-x_k \rangle \ge 0 \right\}; 
\eeq
\item [6)] Set $k = k+1$ and go to step 1.
\end{itemize} 

We now add a few comments about procedure~$\APLGAP$
described above. Firstly, note that the level $l$ used 
in \eqnok{prox_step} is fixed throughout the procedure. 
This is different from the ABL gap reduction procedure 
where the level $l_k$ used in \eqnok{def_ABL_xt}
changes at each iteration. Moreover, instead of having one parameter 
$\lambda$ as in the ABL method, we need to use two parameters (i.e., $\beta$ 
and $\theta$) whose values are fixed a priori, say, $\beta = \theta = 0.5$.

Secondly, procedure~$\APLGAP$ can be terminated in either step 1 or 3.
If it terminates in step~1, then we say that significant progress has been made on 
the lower bound $\underline{f}_k$. Otherwise, if it terminates in step~3,
then significant progress has been made on the upper bound $\overline{f}_k$. 

Thirdly, observe that in step 4 of procedure~$\APLGAP$, 
we can choose any set $X'_k$ satisfying 
$\underline{X}_k \subseteq X'_k \subseteq \overline{X}_k$
(the simplest way is to set $X'_k = \underline{X}_k$ or 
$X'_k = \overline{X}_k$). While the number of constraints 
in $\underline{X}_k$ increases with $k$, the set
$\overline{X}_k$ has only one more constraint than $X$.
By choosing $X'_k$ between these two extremes, we can control the number of
constraints in subproblems \eqnok{def_lb} and \eqnok{prox_step}.
Hence, the iteration cost of procedure~$\APLGAP$ can be considerably smaller than that of procedure
$\ABLGAP$.  

We summarize below a few more observations regarding the execution
of procedure~$\APLGAP$.

\begin{lemma} \label{obs_gap}
The following statements hold for procedure $\APLGAP$.
\begin{itemize}
\item [a)] $\{X'_k\}_{k\ge 0}$ is a sequence of localizers 
of the level set $\levelset_f(l)$;
\item [b)] $\underline{f}_0 \le \underline{f}_1 \le \ldots \le \underline{f}_k
\le f^*$ and $\overline{f}_0 \ge \overline{f}_1 \ge \ldots \ge \overline{f}_k
\ge f^*$ for any $k \ge 1$;
\item [c)] Problem \eqnok{prox_step} is always feasible unless the procedure terminates;
\item [d)] $\emptyset \neq \underline{X}_k \subseteq \overline{X}_k$ for any 
$k \ge 1$ and hence Step 4 is always feasible unless the procedure terminates;
\item [e)] Whenever the procedure terminates, we have
$f(p^+) - \lb^+ \le q \, [f(p) - \lb],$
where
\beq \label{def_q}
q \equiv q(\beta, \theta) := 1 - (1 -\theta) \min\{\beta, 1-\beta\}.
\eeq
\end{itemize}
\end{lemma}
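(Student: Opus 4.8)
\textbf{Proof plan for Lemma~\ref{obs_gap}.}
The plan is to verify the six statements essentially in the order listed, since the later parts depend on the earlier ones. For part a), I would argue by induction on $k$: $X'_0 = X \supseteq \levelset_f(l)$ trivially, and if $X'_{k-1}$ is a localizer then I need $\levelset_f(l) \subseteq \underline{X}_k$, because then $\levelset_f(l) \subseteq \underline{X}_k \subseteq X'_k$. This containment holds since for any $x \in \levelset_f(l)$ we have $x \in X'_{k-1}$ (induction hypothesis) and, by convexity of $f$ together with \eqnok{def_Linear_Model}, $h(x^l_k, x) \le f(x) \le l$, so $x \in \underline{X}_k$. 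Part b) then follows: the lower bound monotonicity is immediate from the $\max$ in \eqnok{def_lb}, and $\underline f_k \le f^*$ follows from Lemma~\ref{lemma_lower_bnd} applied to the localizer $X'_{k-1}$ (giving $\min\{l,\underline h_k\} \le f^*$) combined with $\underline f_{k-1} \le f^*$ inductively; the upper bound chain is immediate from the $\min$ defining $\bar f_k$ and the fact that each $f(\cdot)$ evaluated is an upper bound on $f^*$.

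For parts c) and d), the feasibility of \eqnok{prox_step} is exactly the statement that $\underline{X}_k = \{x \in X'_{k-1}: h(x^l_k,x)\le l\} \neq \emptyset$ when the procedure has not terminated at step~1. Here is the key point: if step~1 did not terminate, then $\underline f_k < l - \theta(l - \underline f_0) \le l$, which forces $\underline h_k < l$ (otherwise $\min\{l,\underline h_k\} = l$ and $\underline f_k \ge l$, contradiction — modulo checking $\underline f_0 \le l$, which holds since $l = \beta \underline f_0 + (1-\beta)\overline f_0 \ge \underline f_0$ as $\overline f_0 \ge f^* \ge \underline f_0$). Since $\underline h_k = \min_{x\in X'_{k-1}} h(x^l_k,x) < l$, the minimizer lies in $\underline{X}_k$, so $\underline{X}_k \neq \emptyset$ and \eqnok{prox_step} is feasible; its solution $x_k$ exists by compactness and continuity. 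For $\underline{X}_k \subseteq \overline{X}_k$: $x_k$ solves \eqnok{prox_step} over the convex set $X'_{k-1}$ intersected with the halfspace $\{h(x^l_k,\cdot)\le l\}$, and $\underline{X}_k$ is contained in that feasible region, so the first-order optimality condition $\langle \nabla d_\w(x_k), x - x_k\rangle \ge 0$ holds for all $x \in \underline{X}_k$ (using that $x_k \in \text{int-feasible direction cone}$ argument — more precisely, variational inequality for the projection-type problem), which is exactly membership in $\overline{X}_k$; this also requires $\underline{X}_k \subseteq X$, which is clear.

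For part e), I would split into the two termination cases. If the procedure stops in step~1, then $\lb^+ = \underline f_k \ge l - \theta(l - \underline f_0)$ and $p^+ = x^u_{k-1}$, so using $f(p^+) = \overline f_{k-1} \le \overline f_0 = f(p)$ and $\underline f_0 = \lb$, $f(p)-\lb = \overline f_0 - \underline f_0$, while $f(p^+)-\lb^+ \le \overline f_0 - [l - \theta(l-\underline f_0)] = (1-\beta)(1-\theta)(\overline f_0 - \underline f_0) + \theta\beta(\overline f_0-\underline f_0)$ after substituting $l = \beta\underline f_0 + (1-\beta)\overline f_0$; simplifying gives a bound of the form $[1 - (1-\theta)(1-\beta)](f(p)-\lb)$. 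If it stops in step~3, then $\overline f_k \le l + \theta(\overline f_0 - l)$ and $\lb^+ = \underline f_k \ge \underline f_0$, so $f(p^+)-\lb^+ \le [l+\theta(\overline f_0 - l)] - \underline f_0 = [1 - (1-\theta)\beta](\overline f_0 - \underline f_0)$. Taking the larger of the two bounds $1-(1-\theta)(1-\beta)$ and $1-(1-\theta)\beta$ yields $q = 1 - (1-\theta)\min\{\beta, 1-\beta\}$ as claimed. The main obstacle is part d), specifically making the variational-inequality argument for $\underline{X}_k \subseteq \overline{X}_k$ fully rigorous: one must confirm that the first-order optimality condition for the constrained problem \eqnok{prox_step} genuinely yields $\langle \nabla d_\w(x_k), x-x_k\rangle \ge 0$ for every $x \in \underline{X}_k$ (and not merely for $x$ in the feasible set of \eqnok{prox_step}), which is fine because $\underline{X}_k$ is precisely that feasible set; the only subtlety is handling the active constraint $h(x^l_k, x) \le l$ correctly, but since $\underline{X}_k$ already incorporates this constraint the argument is clean.
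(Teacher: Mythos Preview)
Your proposal is correct and follows essentially the same route as the paper: the same induction for a), the same appeal to Lemma~\ref{lemma_lower_bnd} for b), the same infeasibility-implies-termination argument for c), the same first-order optimality argument for d) (where, as you note, the feasible set of \eqnok{prox_step} is exactly $\underline{X}_k$, so the variational inequality is immediate), and the same two-case computation for e). One minor slip: in the step~1 termination case your intermediate expression $(1-\beta)(1-\theta)(\overline f_0 - \underline f_0) + \theta\beta(\overline f_0-\underline f_0)$ is not what $\overline f_0 - [l - \theta(l-\underline f_0)]$ actually equals --- the correct expansion is $(\overline f_0 - l) + \theta(l-\underline f_0) = \beta(\overline f_0-\underline f_0) + \theta(1-\beta)(\overline f_0-\underline f_0) = [1-(1-\theta)(1-\beta)](\overline f_0 - \underline f_0)$ --- but your stated final bound is right.
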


\begin{proof}
We first show part a). Firstly, noting that $\levelset_f(l) \subseteq X'_0$, we can show that
$\levelset_f(l) \subseteq X'_k$, $k \ge 1$, by using induction.
Suppose that $X'_{k-1}$ is a localizer of the level set $\levelset_f(l)$.
Then, for any $x \in \levelset_f(l)$, we have $x \in X'_{k-1}$. Moreover,
by the definition of $h$, we have $h(x^l_k,x) \le f(x) \le l$
for any $x \in \levelset_f(l)$. Using these two observations  
and the definition of $\underline{X}_k$ in \eqnok{bound_X}, 
we have $\levelset_f(l) \subseteq \underline{X}_k$, which, in view of
the fact that $\underline{X}_k \subseteq X'_k$, implies $\levelset_f(l) 
\subseteq X'_k$, i.e., $X'_k$ is a localizer of $\levelset_f(l)$. 

We now show part b). The first relation follows from Lemma \ref{lemma_lower_bnd},  
\eqnok{def_lb}, and the fact that $X'_k$, $k \ge 0$, are localizers of $\levelset_f(l)$
due to part a). The second relation of part b) follows immediately from the definition of 
$\overline{f}_k$, $k \ge 0$. 

To show part c), suppose that problem \eqnok{prox_step} is infeasible. Then,
by \textcolor{blue}{the definition of $\underline{h}_k$} in \eqnok{def_lb}, we have $\underline{h}_k >l$, 
which implies \textcolor{blue}{$\underline{f}_k \ge l$}, which in turn implies that
the procedure should have terminated in step 1 at iteration $k$.

To show part d), note that by part c), 
the set $\underline{X}_k$ is nonempty. Moreover, by the optimality condition
of \eqnok{prox_step} and the definition of $\underline{X}_k$ in \eqnok{bound_X}, we have
$\langle \nabla \w(x_k), x-x_k \rangle \ge 0$ for any $x \in \underline{X}_k$,
which then implies that $\underline{X}_k \subseteq \overline{X}_k$.

We now provide the proof of part e). 
Suppose first that the procedure terminates in step 1 of the $k$-th iteration. We must have
$\underline{f}_k \ge l - \theta (l - \underline{f}_0)$. By using this condition, and the facts that 
$f(p^+) \le \overline{f}_0$ (see part b) and 
$l = \beta \underline{f}_0 + (1 - \beta) \overline{f}_0$, we obtain
\beq \label{de_delta1}
f(p^+) - \lb^+ =f(p^+)-\underline{f}_k
\le \overline{f}_0 - [l- \theta (l - \underline{f}_0)]
= [1 - (1-\beta) (1-\theta)] (\overline{f}_0  - \underline{f}_0).
\eeq
Now suppose that the procedure terminates in step 3 of the $k$-th iteration. We must have
$\overline{f}_k \le l + \theta (\overline{f}_0 - l)$.
By using this condition, and the facts that $\lb^+ \ge \underline{f}_0$
(see Lemma \ref{obs_gap}.b) and $l = \beta \underline{f}_0 
+ (1 - \beta) \overline{f}_0$,
we have
\[
f(p^+) - \lb^+ = \overline{f}_k - \lb^+
\le l + \theta (\overline{f}_0 - l) - \underline{f}_0 
= [1 - (1-\theta) \beta] (\overline{f}_0 - \underline{f}_0 ). 
\]
Part e) then follows by combining the above two relations.
\end{proof}
 
\vgap

By showing how the gap between the upper bound (i.e., $f(x^u_k)$) and the
level $l$ decreases with respect to $k$,
we establish in Theorem~\ref{APL_sum} some important convergence
properties of procedure~$\APLGAP$.

\begin{theorem} \label{APL_sum}
Let $\alpha_k \in (0,1]$, $k = 1, 2, \ldots$,
be given. Also let $(x^l_k, x_k, x^u_k) \in X \times X \times X$, $k \ge 1$, be the
search points, $l$ be the level and $d_\w(\cdot)$ be the prox-function
in procedure~$\APLGAP$. Then, we have
\beq \label{Recursion_APL}
f(x_k^u) -l \le  (1-\alpha_1) \gamma_k(1) \, [f(x_0^u) -l] +  \frac{M}{1+\rho}\left[\frac{2 d_\w(x_k)}{\sigma_\w}\right]^\frac{1+\rho}{2} 
\gamma_k(1) \, \|\Gamma_{k}(1, \rho)\|_{\frac{2}{1-\rho}}
\eeq
for any
$k \ge 1$, where $\|\cdot\|_p$ denotes the $l_p$ norm, $\gamma_k(\cdot)$ and $\Gamma_k(\cdot, \cdot)$, 
respectively, are defined in \eqnok{def_ABL_gamma} and \eqnok{def_ABL_gamma1}.
In particular, if $\alpha_k \in (0,1]$, $k=1, 2, \ldots$, are chosen such that
for some $c > 0$,
\beq \label{APL_policy}
\alpha_1 = 1 \ \ \
\mbox{and} \ \ \
\gamma_k(1) \, \|\Gamma_k(1,\rho)\|_\frac{2}{1-\rho} \le c \, k^{-\frac{1+3\rho}{2}},
\eeq
then the number of iterations performed
by procedure~$\APLGAP$ can be bounded by
\beq \label{APL_GAP_BND}
K_{APL}(\Delta_0) := \left \lceil 
\left(
\frac{c \, M \Omega_{\w,X}^\frac{1+\rho}{2}}{\beta \theta (1+\rho)\Delta_0}
\right)^\frac{2}{1+3\rho}
\right \rceil,
\eeq
where $\Delta_0 = \overline{f}_0 - \underline{f}_0$
and $\Omega_{\w,X}$ is defined in \eqnok{diam_X}.
\end{theorem}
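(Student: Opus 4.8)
The plan is to mirror closely the proof of Theorem~\ref{ABL_sum}, since procedure~$\APLGAP$ shares the same three-sequence acceleration structure as $\ABLGAP$ but with a fixed level $l$. First I would establish the one-step recursion. Write $\tilde x^u_k \equiv \alpha_k x_k + (1-\alpha_k) x^u_{k-1}$. By \eqnok{prox_step} we have $h(x^l_k, x_k) \le l$, and by the definition of $x^u_k$ in step 3 we have $f(x^u_k) \le f(\tilde x^u_k)$. Applying Lemma~\ref{three_points} with this $l$ then yields
\beq \label{apl_basic_it}
f(x^u_k) \le (1-\alpha_k) f(x^u_{k-1}) + \alpha_k l + \frac{M}{1+\rho}\|\alpha_k(x_k - x_{k-1})\|^{1+\rho}.
\eeq
Subtracting $l$ from both sides and writing $\delta_k := f(x^u_k) - l$ gives $\delta_k \le (1-\alpha_k)\delta_{k-1} + \frac{M}{1+\rho}\|\alpha_k(x_k-x_{k-1})\|^{1+\rho}$.

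Next I would unwind this recursion via Lemma~\ref{tech_result_sum}, taking $w_k = \alpha_k$ (so that $W_k = \gamma_k(1)$), $\delta_0 = f(x^u_0) - l$ and $B_k = M\|\alpha_k(x_k-x_{k-1})\|^{1+\rho}/(1+\rho)$. This produces
\[
\delta_k \le (1-\alpha_1)\gamma_k(1)\,\delta_0 + \frac{M}{1+\rho}\gamma_k(1)\sum_{i=1}^k \gamma_i(1)^{-1}\|\alpha_i(x_i - x_{i-1})\|^{1+\rho},
\]
and then H\"older's inequality with exponents $\frac{2}{1-\rho}$ and $\frac{2}{1+\rho}$ converts the sum into $\|\Gamma_k(1,\rho)\|_{\frac{2}{1-\rho}}\big[\sum_{i=1}^k\|x_i-x_{i-1}\|^2\big]^{\frac{1+\rho}{2}}$, exactly as in the proof of Theorem~\ref{ABL_sum}. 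The remaining ingredient is a bound on $\sum_{i=1}^k\|x_i-x_{i-1}\|^2$ in terms of the prox-function. Here I would use that the prox-step \eqnok{prox_step} is a projection (in the Bregman sense) of $x_{k-1}$ onto a convex set containing $x_k$; combined with the fact that the successive feasible sets in \eqnok{prox_step} are nested inside $X'_{k-1}$ and the localizer update ensures the minimizer of the relevant linear constraints stays feasible, a telescoping three-point/Pythagorean inequality for $d_\w$ gives $\sum_{i=1}^k\sigma_\w\|x_i-x_{i-1}\|^2 \le 2 d_\w(x_k)$ (this is the non-Euclidean analogue of Lemma~\ref{ABL-cluster}). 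Plugging this in yields \eqnok{Recursion_APL}, and bounding $d_\w(x_k) \le \cD_{\w,X}^2$ so that $2d_\w(x_k)/\sigma_\w \le \Omega_{\w,X}$ gives the cleaner form needed for the iteration bound.

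For the iteration count, suppose procedure~$\APLGAP$ has not terminated by iteration $K := K_{APL}(\Delta_0)$. Then neither termination test fired, so in particular $\underline{f}_k < l - \theta(l-\underline{f}_0)$ and $\overline{f}_k > l + \theta(\overline{f}_0 - l)$ for $k \le K$. The second of these gives $f(x^u_K) - l = \overline{f}_K - l > \theta(\overline{f}_0 - l) = \theta(1-\beta)\Delta_0 \ge \beta\theta\Delta_0$ (and symmetrically $\theta\beta\Delta_0$ if one works from the lower-bound side), so $f(x^u_K) - l > \beta\theta\Delta_0$. On the other hand, the policy \eqnok{APL_policy} with $\alpha_1 = 1$ kills the first term in \eqnok{Recursion_APL} and bounds the second by $\frac{M}{1+\rho}\Omega_{\w,X}^{(1+\rho)/2} c\,K^{-(1+3\rho)/2}$; by the choice of $K$ in \eqnok{APL_GAP_BND} this is at most $\beta\theta\Delta_0$, a contradiction. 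Hence the procedure terminates within $K$ iterations.

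The main obstacle I anticipate is the telescoping bound $\sum \sigma_\w\|x_i - x_{i-1}\|^2 \le 2d_\w(x_k)$: unlike the Euclidean case in Lemma~\ref{ABL-cluster}, where all the level sets ${\cal L}_i$ share a common point and a single Pythagorean inequality chains up, here one must track how the prox-center moves under Bregman projections onto the shrinking localizers, using strong convexity of $\w$ together with the localizer sandwich $\underline{X}_k \subseteq X'_k \subseteq \overline{X}_k$ and the defining half-space of $\overline{X}_k$ (namely $\langle\nabla d_\w(x_k), x - x_k\rangle \ge 0$) to guarantee that each new $x_{k+1}$ still lies in a region over which the three-point inequality can be applied. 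Getting the constant and the cumulative (rather than pairwise) nature of this estimate right is the crux; everything else is a direct transcription of the ABL analysis with $\lambda\alpha_k$ replaced by $\alpha_k$ and the Euclidean squared norm replaced by $2d_\w/\sigma_\w$.
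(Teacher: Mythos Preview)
Your proposal is essentially the paper's own argument: the one-step recursion from Lemma~\ref{three_points}, the unwinding via Lemma~\ref{tech_result_sum} with $w_k=\alpha_k$, H\"older, and the telescoping prox inequality $\tfrac{\sigma_\w}{2}\sum_i\|x_i-x_{i-1}\|^2\le d_\w(x_k)$ obtained exactly as you describe from the half-space defining $\overline{X}_k$ together with strong convexity of $d_\w$. One small slip in the termination step: since $l=\beta\underline{f}_0+(1-\beta)\overline{f}_0$ we have $\overline{f}_0-l=\beta\Delta_0$ (not $(1-\beta)\Delta_0$), so $f(x^u_K)-l>\theta(\overline{f}_0-l)=\theta\beta\Delta_0$ holds as an equality with no need for the extra inequality you inserted.
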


\begin{proof}
We first show that the prox-centers $\{x_k\}$ in procedure $\APLGAP$ are ``close''
to each other in terms of $\sum_{i=1}^k \|x_{i-1} - x_i\|^2$.
\textcolor{blue}{This result slightly extends Lemma~\ref{ABL-cluster} for procedure~$\ABLGAP$.}
Observe that, the function $d_\w(x)$ is strongly
convex with modulus \textcolor{blue}{$\sigma_\w$}, $x_0 = \arg \min_{x \in X} d_\w(x)$ and $d_\w(x_0) =0$.
Hence, we have,
\beq \label{strong_ws}
\frac{\sigma_\w}{2} \|x_1-x_0\|^2 \le d_\w(x_1) - d_\w(x_0) = d_\w(x_1).
\eeq
Moreover, by \eqnok{bound_X}, we have
$\langle \textcolor{blue}{\nabla d_\w(x_{k})}, x-x_k \rangle \ge 0$ for any $x \in \overline{X}_k$,
which, together with the fact that $X'_k \subseteq \overline{X}_k$, then imply
that $\langle \nabla d_\w(x_{k}), x-x_k \rangle \ge 0$ for any $x \in X'_k$. Using this observation,
the fact that $x_{k+1} \in X'_k$ due to \eqnok{prox_step}, and
the strong convexity of $d_\w$, we have
\[
\frac{\sigma_\w}{2} \|x_{k+1} - x_{k}\|^2 \le 
d_\w(x_{k+1}) - d_\w(x_{k}) - \langle \nabla d_\w(x_{k}), x_{k+1} - x_{k} \rangle
\le d_\w(x_{k+1}) - d_\w(x_{k}), \ \ \forall k \ge 1.
\]
Summing up the above inequalities with \eqnok{strong_ws}, we arrive at 
\beq \label{cluster_APL}
\frac{\sigma_\w}{2} \sum_{i=1}^k \|x_i - x_{i-1}\|^2 \le d_\w(x_k).
\eeq

\textcolor{blue}{Next, we establish a recursion for procedure~$\APLGAP$.
Let us denote $\tilde x_k^u \equiv \alpha_k x_k + (1 - \alpha_k) x_{k-1}^u$,
$\gamma_k \equiv \gamma_k(1)$ and $\Gamma_k \equiv \Gamma_k(1,\rho)$.
By the definitions of $x^u_k$ and $\tilde x^u_k$, we have $f(x_k^u) \le f(\tilde x_k^u)$.
Also by \eqnok{prox_step}, we have $h(x_k^l, x) \le l$. Using these observations
and Lemma~\ref{three_points}, we have}
\[
f(x^u_k) \le f(\tilde x^u_k) \le (1 - \alpha_k) f(x^u_{k-1}) + \alpha_k l
+ \frac{M}{1+\rho} \|\alpha_k(x_k - x_{k-1})\|^{1+\rho}, \ \ \forall k \ge 1.
\]
Subtracting $l$ from both sides of the above inequality, we obtain
\beq \label{rec_APL}
f(x^u_k) - l \le (1 - \alpha_k) [f(x^u_{k-1})-l]+ \frac{M}{1+\rho} \|\alpha_k(x_k - x_{k-1})\|^{1+\rho},
\ \ \forall k \ge 1.
\eeq
\textcolor{blue}{Using the above inequality and Lemma~\ref{tech_result_sum} (with $\delta_k = f(x^u_k) -l$, $w_k = 1-\alpha_k$,
$W_k = \gamma_k$ and $B_k =M\|\alpha_k(x_k - x_{k-1})\|^{1+\rho}/(1+\rho)$), we obtain}
\beqas
f(x^u_k) - l &\le& (1 -\alpha_1) \gamma_k [f(x^u_{0})-l]
+ \frac{M}{1+\rho} \gamma_k \sum_{i=1}^k \gamma_i^{-1} \|\alpha_i(x_i - x_{i-1})\|^{1+\rho}\\
&\le& (1 -\alpha_1) \gamma_k [f(x^u_{0})-l] + \frac{M}{1+\rho} \|\Gamma_k\|_\frac{2}{1-\rho} 
\left[\sum_{i=1}^{k} \|x_i - x_{i-1}\|^2\right]^\frac{1+\rho}{2}, \ \ \forall k \ge 1
\eeqas
where the last inequality follows from H\"{o}lder's inequality.
The above conclusion together with \eqnok{cluster_APL} then imply that \eqnok{Recursion_APL} holds.

Now, denote $K = K_{APL}(\epsilon)$ and suppose that condition \eqnok{APL_policy} holds. Then
by \eqnok{Recursion_APL}, \eqnok{APL_policy}, \eqnok{def_cal_DX} and \eqnok{diam_X}, we have
\[
f(x^u_K) - l \le \frac{c M}{1+\rho}\left[\frac{2 d_\w(x_K)}{\sigma_\w}\right]^\frac{1+\rho}{2} K^{-\frac{1+3\rho}{2}}
\le \frac{c M}{1+\rho}\Omega_{\w,X}^\frac{1+\rho}{2} K^{-\frac{1+3\rho}{2}}
\le \theta \beta \Delta_0 = \theta (\overline{f}_0 - l),
\]
where the last equality from the fact that
$l = \beta \, \underline{f}_0 + (1-\beta) \, \overline{f}_0
= \overline{f}_0 - \beta \Delta_0$. Hence, procedure~$\APLGAP$ must
terminate in step 3 of the $K$-th iteration.
\end{proof}

\vgap

In view of Theorem~\ref{APL_sum}, we discuss below a few possible selections of $\{\alpha_k\}$,
which satisfy condition~\eqnok{APL_policy} and thus guarantee the termination of
procedure~$\APLGAP$. It is worth noting that these selections of $\{\alpha_k\}$
do not rely on any problem parameters, including $M$, $\rho$ and $\Omega_{\w,X}$,
nor on any other algorithmic parameters, such as $\beta$ and $\theta$.

\begin{proposition} \label{APL-step}
Let $\gamma_k(\cdot)$ and $\Gamma_k(\cdot,\cdot)$, respectively, be defined in \eqnok{def_ABL_gamma}
and \eqnok{def_ABL_gamma1}.
\begin{itemize}
\item [a)] If $\alpha_k = 2/(k+1)$, $k = 1, 2, \ldots$,
then $\alpha_k \in (0,1]$ and relation \eqnok{APL_policy} holds with 
$
c = 2^{1+\rho} 3^{-\frac{1-\rho}{2}}.
$
\item [b)] If $\alpha_k$, $k = 1, 2, \ldots$, are recursively
defined by
\beq \label{stepsize2}
\alpha_1 = \gamma_1 = 1, \ \
\gamma_k = \alpha_k^2 = (1 - \alpha_k) \gamma_{k-1},
\ \ \forall \, k \ge 2,
\eeq
then we have $\alpha_k \in (0,1]$ for any $k \ge 1$. Moreover,
condition \eqnok{APL_policy} is satisfied with
$
c =  \frac{4} {3^{\frac{1-\rho}{2}}}.
$
\end{itemize}
\end{proposition}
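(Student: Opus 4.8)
The plan is to recycle the proof of Proposition~\ref{ABL-step}, specialized to $\lambda=1$, since the condition~\eqnok{APL_policy} is phrased precisely in terms of $\gamma_k(1)$ and $\Gamma_k(1,\rho)$. Throughout I would abbreviate $\gamma_k\equiv\gamma_k(1)$ and $\Gamma_k\equiv\Gamma_k(1,\rho)$, so that by~\eqnok{def_ABL_gamma} we have $\gamma_1=1$ and $\gamma_k=(1-\alpha_k)\gamma_{k-1}$ for $k\ge2$, and $\gamma_k^{-1}\alpha_k^{1+\rho}$ is the $k$-th entry of $\Gamma_k$. Note that $\alpha_1=1$ forces $\gamma_1=(1-\alpha_1)\cdot(\text{anything})$ only for $k\ge2$, so the first requirement in~\eqnok{APL_policy} is automatic for both stepsize rules.

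For part~a), with $\alpha_k=2/(k+1)$ the recursion for $\gamma_k$ telescopes to $\gamma_k=\prod_{i=2}^{k}\frac{i-1}{i+1}=\frac{2}{k(k+1)}$; in particular $\alpha_k\in(0,1]$, and $\gamma_k^{-1}\alpha_k^{1+\rho}=2^{\rho}k/(k+1)^{\rho}\le 2^{\rho}k^{1-\rho}$. Taking the $l_{2/(1-\rho)}$ norm and using $\sum_{i=1}^{k}i^2\le k(k+1)^2/3$ exactly as in the proof of Proposition~\ref{ABL-step}.a), I would get $\gamma_k\|\Gamma_k\|_{2/(1-\rho)}\le 2^{1+\rho}3^{-(1-\rho)/2}\,k^{-(1+\rho)/2}(k+1)^{-\rho}$, and then the elementary bound $(k+1)^{-\rho}\le k^{-\rho}$ (valid since $\rho\ge0$) collapses the exponent to $-(1+3\rho)/2$, giving~\eqnok{APL_policy} with $c=2^{1+\rho}3^{-(1-\rho)/2}$.

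For part~b), I would first check $\alpha_k\in(0,1]$ by induction, copying the argument in the proof of Proposition~\ref{ABL-step}.b) with $\lambda$ set to $1$: solving $\alpha_k^2+\gamma_{k-1}\alpha_k-\gamma_{k-1}=0$ gives $\alpha_k=\tfrac12\bigl(-\gamma_{k-1}+\sqrt{\gamma_{k-1}^2+4\gamma_{k-1}}\bigr)>0$, and the induction hypothesis $\gamma_{k-1}\le1$ lets one enlarge the radicand to $(\gamma_{k-1}+2)^2$, so $\alpha_k\le1$ and hence $\gamma_k=\alpha_k^2\le1$. Next, plugging $\alpha_k=\sqrt{\gamma_k}$ and $\gamma_{k-1}-\gamma_k=\alpha_k\gamma_{k-1}$ into the identity for $1/\sqrt{\gamma_k}-1/\sqrt{\gamma_{k-1}}$ and using $\gamma_k\le\gamma_{k-1}$, I would obtain $\tfrac12\le 1/\sqrt{\gamma_k}-1/\sqrt{\gamma_{k-1}}\le1$, whence $1+(k-1)/2\le 1/\sqrt{\gamma_k}\le k$. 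This yields $\gamma_k\le 4/(k+1)^2$ and $\gamma_k^{-1}\alpha_k^{1+\rho}=(\gamma_k^{-1/2})^{1-\rho}\le k^{1-\rho}$. Finally, bounding $\sum_{i=1}^{k}i^2\le\int_0^{k+1}u^2\,du=(k+1)^3/3$ and combining the two estimates gives $\gamma_k\|\Gamma_k\|_{2/(1-\rho)}\le 4\cdot3^{-(1-\rho)/2}(k+1)^{-(1+3\rho)/2}\le 4\cdot3^{-(1-\rho)/2}k^{-(1+3\rho)/2}$, i.e.~\eqnok{APL_policy} with $c=4\cdot3^{-(1-\rho)/2}$.

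No step here is genuinely hard: the whole proof is the $\lambda=1$ instance of Proposition~\ref{ABL-step}, so I expect the only place needing a little care is the induction $\alpha_k\in(0,1]$ in part~b), where the radicand must be enlarged in the correct direction using $\gamma_{k-1}\le1$. The uniformity assertion of the proposition --- that none of the proposed stepsize rules depends on $M$, $\rho$, $\Omega_{\w,X}$, $\beta$ or $\theta$ --- needs no argument beyond inspecting their explicit definitions.
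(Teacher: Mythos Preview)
Your proposal is correct and matches the paper's approach almost verbatim: the paper proves part~a) directly with the same telescoping $\gamma_k=2/[k(k+1)]$, the same bound $\gamma_k^{-1}\alpha_k^{1+\rho}\le 2^\rho k^{1-\rho}$, and the same use of $\sum_{i=1}^k i^2\le k(k+1)^2/3$, and for part~b) it simply invokes Proposition~\ref{ABL-step}.b) with $\lambda=1$, which is exactly what you reproduce. The only cosmetic difference is that in part~b) the paper bounds $\sum_{i=1}^k[1+\lambda(i-1)]^2$ by $\int_0^{2+\lambda(k-1)}u^2\,du$ before specializing, whereas you specialize first and bound $\sum_{i=1}^k i^2$ by $\int_0^{k+1}u^2\,du$; the resulting estimates are identical.
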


\begin{proof}
We show part a) only since part b) follows directly from Proposition~\ref{ABL-step}.b) with $\lambda = 1$.
Denoting $\gamma_k \equiv \gamma_k(1)$ and $\Gamma_k \equiv \Gamma_k(1,\rho)$,
by \eqnok{def_ABL_gamma} and \eqnok{def_ABL_gamma1}, we have
\beq \label{rel_step1}
\gamma_k = \frac{2}{k (k+1)} \ \ \mbox{and} \ \
\gamma_k^{-1} \alpha_k^{1+\rho} = \left(\frac{2}{k+1}\right)^\rho k
\le 2^\rho k^{1-\rho}.
\eeq
Using \eqnok{rel_step1} and the simple observation that
$
\sum_{i=1}^k i^2 = k(k+1)(2k+1)/6 
\le k(k+1)^2/3,
$
we have
\beqas
\gamma_k \|\Gamma_k\|_{\frac{2}{1-\rho}} &\le& \gamma_k \left[\sum_{i=1}^k 
\left( 2^\rho i^{1-\rho}\right)^\frac{2}{1-\rho}
\right]^\frac{1-\rho}{2} 
= 2^\rho \gamma_k \left(
\sum_{i = 1}^k i^2 \right)^\frac{1-\rho}{2} 
\le 2^\rho \gamma_k \left[ \frac{k (k+1)^2}{3}\right]^\frac{1-\rho}{2}\\
&=& \left(2^{1+\rho} \, 3^{-\frac{1-\rho}{2}}\right) 
\left[k^{-\frac{1+\rho}{2}} (k+1)^{-\rho}\right]
\le \left(2^{1+\rho} \, 3^{-\frac{1-\rho}{2}}\right)  \, k^{-\frac{1+3\rho}{2}}.
\eeqas
\end{proof}

\vgap

In view of Lemma~\ref{obs_gap}.e) and the termination criterion of
procedure~$\APLGAP$, each call to this procedure can reduce the gap between a given
upper and lower bound on $f^*$ by a constant factor $q$ (see \eqnok{def_q}).
In the following APL method, we will iteratively call procedure~$\APLGAP$
until a certain accurate solution of problem \eqnok{cp} is found.

\vgap

\noindent {\bf The APL method:} 
\begin{itemize}
\item [] {\bf Input:} initial point $p_0 \in X$, tolerance $\epsilon > 0$ \textcolor{blue}{and algorithmic parameters $\beta, \theta \in (0,1)$}.
\item [0)] Set $p_1 \in \Argmin_{x \in X} h(p_0, x)$, $\lb_1 = h(p_0, p_1)$
and $\ub_1 = f(p_1)$. Let $s=1$.
\item [1)] If $\ub_s - \lb_s \le \epsilon$, {\bf terminate};
\item [2)] Set $(p_{s+1}, \lb_{s+1}) = \APLGAP(p_s, \lb_s,\textcolor{blue}{ \beta, \theta})$ and $\ub_{s+1} = f(p_{s+1})$;
\item [3)] Set $s = s+1$ and go to step 1.
\end{itemize}

Similarly to the ABL method, whenever $s$ increments by $1$, we say that a phase of the APL method occurs.
Unless explicitly mentioned otherwise, an iteration of
procedure~$\APLGAP$ is also referred to as an iteration of the APL method.
The main convergence properties of the above APL method are summarized as follows.

\begin{theorem} \label{APL_theorem}
Let $M$, $\rho$, $\Omega_{\w,X}$ and $q$ be defined in 
\eqnok{smoothness}, \eqnok{diam_X} and \eqnok{def_q}, respectively.
Suppose that $\alpha_k\in (0,1]$, $k = 1, 2, \ldots$,
in procedure~$\APLGAP$ are chosen such that condition \eqnok{APL_policy} holds for some $c>0$.

\begin{itemize}
\item [a)] The number of phases performed by the APL method does not exceed
\beq \label{bnd_phase_APL}
\bar{S}(\epsilon) := \left \lceil \max \left\{0,
\log_{\frac{1}{q}}
 \frac{M {\Omega}_{\w,X}^\frac{1+\rho}{2}}{(1+\rho) \epsilon} \right\}
\right \rceil.
\eeq
\item [b)] The total number of iterations
performed by the APL method can be bounded by
\beq \label{bound_APL_iter}
\bar{S}(\epsilon)+
\frac{1}{1 - q^\frac{2}{1+3\rho}}
\left(
\frac{c M \Omega_{\w,X}^\frac{1+\rho}{2}}{\beta \theta (1+\rho) \epsilon}
\right)^\frac{2}{1+3\rho} .
\eeq
\end{itemize}
\end{theorem}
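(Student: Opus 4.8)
\textbf{Proof proposal for Theorem~\ref{APL_theorem}.}
The plan is to mirror exactly the structure of the proof of Theorem~\ref{ABL_theorem}, replacing the contraction factor $\lambda$ by $q = q(\beta,\theta)$ from \eqnok{def_q}, the per-phase iteration bound $K_{ABL}$ by $K_{APL}$ from \eqnok{APL_GAP_BND}, and the constant $M D_X^{1+\rho}/(1+\rho)$ by $M\Omega_{\w,X}^{(1+\rho)/2}/(1+\rho)$. First I would set $\delta_s \equiv \ub_s - \lb_s$ and note, as before, that we may assume $\delta_1 > \epsilon$. By Lemma~\ref{obs_gap}.e), each call to procedure~$\APLGAP$ contracts the gap by the factor $q$, so $\delta_{s+1} \le q\,\delta_s$ for all $s \ge 1$. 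For the initial gap, the same computation as in \eqnok{bound_delta0} applies: $\delta_1 = f(p_1) - h(p_0,p_1) \le M\|p_1-p_0\|^{1+\rho}/(1+\rho)$, and now I would bound $\|p_1-p_0\|^{1+\rho}$ using \eqnok{diam_X}, i.e.\ $\|p_1-p_0\|^2 \le \Omega_{\w,X}$, giving $\delta_1 \le M\Omega_{\w,X}^{(1+\rho)/2}/(1+\rho)$. Combining $\delta_{s+1}\le q\delta_s$ with this bound on $\delta_1$ yields part a): the number of phases is at most $\bar S(\epsilon)$ as in \eqnok{bnd_phase_APL}.

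For part b), suppose procedure~$\APLGAP$ has been called $\bar s$ times with $1 \le \bar s \le \bar S(\epsilon)$. Since $\delta_{\bar s} > \epsilon$ (otherwise the algorithm would have terminated earlier) and $\delta_{s+1} \le q\delta_s$, we get $\delta_s > \epsilon\, q^{s - \bar s}$ for $s = 1,\ldots,\bar s$. Hence
\[
\sum_{s=1}^{\bar s} \delta_s^{-\frac{2}{1+3\rho}} < \sum_{t=0}^{\bar s - 1} \frac{q^{\frac{2}{1+3\rho} t}}{\epsilon^{\frac{2}{1+3\rho}}} \le \frac{1}{(1 - q^{\frac{2}{1+3\rho}})\,\epsilon^{\frac{2}{1+3\rho}}}.
\]
By Theorem~\ref{APL_sum}, the $s$-th call to $\APLGAP$ (on input gap $\delta_s$) performs at most $K_{APL}(\delta_s) = \lceil (cM\Omega_{\w,X}^{(1+\rho)/2} / (\beta\theta(1+\rho)\delta_s))^{2/(1+3\rho)} \rceil$ iterations. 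Summing over $s = 1,\ldots,\bar s$, using $\lceil a \rceil \le 1 + a$ to absorb the $\bar s \le \bar S(\epsilon)$ ceiling contributions, and inserting the geometric-sum estimate above gives the bound in \eqnok{bound_APL_iter}.

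The only genuinely new point relative to Theorem~\ref{ABL_theorem} is that the ABL per-phase bound $K_{ABL}$ has \emph{two} additive terms (one from $(1-\lambda\alpha_1)\Delta_0$ and one from the smoothness term), so its sum over phases splits into an $\bar s$-proportional piece and a geometric-sum piece; here $K_{APL}$ has only the single smoothness term because $\alpha_1 = 1$ kills the $(1-\alpha_1)\gamma_k[f(x_0^u)-l]$ contribution in \eqnok{Recursion_APL}, so the $\bar S(\epsilon)$ term in \eqnok{bound_APL_iter} comes purely from the ceiling operations. I expect no real obstacle: the main thing to check carefully is that the $\Omega_{\w,X}$-based bound on $\|p_1 - p_0\|$ is legitimate (it is, since $p_0, p_1 \in X$ and \eqnok{diam_X} holds for all pairs in $X$), and that the contraction factor in Lemma~\ref{obs_gap}.e) is genuinely bounded away from $1$ so that $\log_{1/q}$ and $1/(1 - q^{2/(1+3\rho)})$ are finite — which holds since $q < 1$ for $\beta,\theta \in (0,1)$.
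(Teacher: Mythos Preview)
Your proposal is correct and follows essentially the same approach as the paper's own proof: set $\delta_s = \ub_s - \lb_s$, use Lemma~\ref{obs_gap}.e) for the contraction $\delta_{s+1}\le q\delta_s$, bound $\delta_1$ via \eqnok{bound_delta0} together with \eqnok{diam_X}, and then sum the per-phase bounds $K_{APL}(\delta_s)$ using the geometric estimate on $\sum_s \delta_s^{-2/(1+3\rho)}$. Your observation that the $\bar S(\epsilon)$ term arises solely from the ceilings (because $\alpha_1=1$ eliminates the second additive term present in $K_{ABL}$) is exactly right and matches what the paper does.
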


\begin{proof}
Denote $\delta_s \equiv \ub_s - \lb_s$, $s \ge 1$. 
Without loss of generality, we assume that $\delta_1 > \epsilon$, 
since otherwise the statements are obviously true.
By Lemma~\ref{obs_gap}.e) and the origin of $\ub_s$ and $\lb_s$, we have 
\beq \label{de_delta_APL}
\delta_{s+1} \le q \delta_s, \ \ s \ge 1.
\eeq
Also note that, by \eqnok{bound_delta0} and \eqnok{diam_X}, we have
\[
\delta_{1} \le \frac{M \|p_1 - p_0\|^{1+\rho}}{1+\rho} \le \frac{M \Omega_{\w,X}^\frac{1+\rho}{2}}{1+\rho}.
\]
The previous two observations then clearly imply that the number of phases performed by
the APL method is bounded by \eqnok{bnd_phase_APL}. 
 
We now bound the total number of iterations performed by the APL method. 
Suppose that procedure $\APLGAP$ has been called
$\bar s$ times for some $1\le \bar s \le \bar{S}(\epsilon)$.
It follows from \eqnok{de_delta_APL} that
$\delta_{s} > \epsilon q^{s-\bar s}$, $s=1, \ldots, \bar s$, since
$\delta_{\bar s} > \epsilon$ due to the origin of $\bar s$.
Using this observation, we obtain
\[
\sum_{s=1}^{\bar s} \delta_{s}^{-\frac{2}{1+3\rho}}
 < \sum_{s=1}^{\bar s} \frac{ q^{\frac{2}{1+3\rho}(\bar s  -s)}}{\epsilon^{\frac{2}{1+3\rho}} }
 = \sum_{t=0}^{\bar s-1} \frac{ q^{\frac{2}{1+3\rho}t}}{\epsilon^{\frac{2}{1+3\rho}} }
   \le \frac{1}{(1 - q^\frac{2}{1+3\rho}) \epsilon^\frac{2}{1+3\rho}
 }.
\]
Moreover, by Theorem~\ref{APL_sum}, the total number of iterations performed by the APL method is bounded by
\[
\sum_{s=1}^{\bar s} K_{APL}(\delta_s) \le \bar s 
+\sum_{s=1}^{\bar s} \left(
\frac{c M \Omega_{\w,X}^\frac{1+\rho}{2}}{\beta \theta (1+ \rho)\delta_s}
\right)^\frac{2}{1+3\rho} \\
\]
Our result then immediately follows by combining the above two inequalities.
\end{proof}

\vgap

Clearly, in view of Theorem~\ref{APL_theorem}, the APL method
also uniformly achieves the optimal complexity for solving smooth, weakly
smooth and nonsmooth CP problems. In addition, its iteration
cost can be significantly smaller than that of the APL method.

\setcounter{equation}{0}
\section{Level methods for solving composite and structured nonsmooth CP problems} \label{sec-app}
In this subsection, we discuss two possible ways to generalize the APL method
developed in Section~\ref{sec-APL}.
More specifically, we discuss a relatively easy extension of the APL method
for solving an important class of composite problems in
Section~\ref{sec-comp}, and present a more involved generalization of this method
for solving a certain class of saddle point problems in Section \ref{sec-saddle}.
Throughout this section, we assume that
$\|\cdot\|$ is an arbitrary norm in its embedded Euclidean space (not necessarily the one
associated with the inner product) \textcolor{blue}{ and use $\|\cdot\|_2$ to denote the $l_2$ norm}.

\subsection{Composite CP problems} \label{sec-comp}
In this subsection, we consider the CP problem \eqnok{cp} with $f$ given by:
\beq \label{def_comp}
f(x):= \Psi(\phi(x)),
\eeq
where the outer function $\Psi: \bbr^m \to \bbr$ is 
Lipschitz continuous and convex, and the inner function, given by
$\phi(x) = \left(\phi_1(x), \ldots, \phi_m(x)\right)$,
is an $m$-dimensional vector-function with Lipschitz continuous and convex 
components $\phi_i$, $i = 1, \ldots, m$. For the sake of notational convenience,
we refer to this class of problems as problem \eqnok{cp}-\eqnok{def_comp}.
We assume that the structure 
of $\Psi$ is relatively simple in comparison with $\phi$ 
(see Examples 1-4) and known to the iterative schemes 
for solving \eqnok{def_comp}, while the inner functions 
$\phi_i$, $i = 1, \ldots, m$, are represented by the black-box first-order 
oracles. These first-order oracles return, given an input point $x \in X$, 
the function values $\phi_i(x)$ and (sub)gradients $\phi'_i(x)$.
The following three additional assumptions are made about $\phi$ and $\Psi$.

\begin{assumption} \label{assmp1}
$\exists$ $\rho_i \in [0,1]$ and $M_i \ge 0$ such that:
\beq \label{smoothness_level}
\|\phi_i'(x) - \phi_i'(y) \|_* \le M_i \|x - y\|^{\rho_i},
\ \ \ \forall \, x, y \in X.
\eeq
\end{assumption}

Observe that relation \eqnok{smoothness_level} holds with $\rho_i = 1$, $0$ and
$(0,1)$, respectively, for smooth, nonsmooth and weakly smooth components $\phi_i$ 
(c.f. \cite{NemNes85-1,Nest88-1,DeGlNe10-1}). 
Clearly, if $M_i = 0$ for some $1 \le i \le m$, then the component
$\phi_i$ must be affine. Otherwise, $\phi_i$ must be nonlinear. 
To fix the notation, let us assume throughout this subsection that,
for a given $1 \le m_0 \le m$, the 
first $m_0$ components of $\phi$ are nonlinear, 
i.e., $M_i > 0$ for any $1 \le i \le m_0$, while the remaining $m-m_0$ 
components are affine, i.e., $M_i = 0$ for any $m_0 + 1\le i \le m$. 
We make the following assumption regarding the monotonicity of $\Psi$ 
with respect to these nonlinear components.
 
\begin{assumption} \label{assump2}
The map
\[
y_i \mapsto \Psi(y_1, \cdots, y_i, \cdots, y_m)
\]
is monotonically nondecreasing for any $1 \le i \le m_0$. 
\end{assumption}

In addition, we make a certain ``Lipschitz-continuity'' assumption
about $\Psi$.

\textcolor{blue}{
\begin{assumption} \label{assump3}
There exists $M_0 \in [0, \infty)$ such that
\beq \label{def_M0}
M_0 := \sup_{y \in \bbr^m, \delta \in \bbr^m_+}
\left\{\frac{\Psi(y+\delta) - \Psi(y)}{\|\delta\|_1}:
\delta_i = 0, \forall \, m_0 + 1\le i \le m.
\right\}.
\eeq
\end{assumption}}

\vgap

Many CP problems can be written in the form of problem \eqnok{cp}-\eqnok{def_comp}. 
We give a few interesting examples as follows.

\begin{example}{\bf Nonsmooth, weakly smooth and smooth problems.}
Let $m=1$ and $\Psi(y) = y$. Then, problem \eqnok{cp}-\eqnok{def_comp} covers the usual nonsmooth,
weakly smooth and smooth CP problems, for which condition \eqnok{smoothness_level}
is satisfied with $\nu = 0$, $\nu \in (0,1)$ and
$\nu = 1$, respectively. 
\end{example}

\begin{example}{\bf Minimax problems.}
Let $\Psi(y) = \max\{y_1, \ldots, y_m\}$. With this outer function,
problem \eqnok{cp}-\eqnok{def_comp} becomes the minimax problem to minimize the maximum
of a finite number of convex functions. It can be 
used, for example, to solve a system of smooth convex inequalities
$\phi_i(x) \le 0$, $i = 1, \ldots, m$, where $\phi_i(x)$ are convex functions
satisfying \eqnok{smoothness_level} with $\nu_1 = \ldots = \nu_m = 1$.
It can also be used to solve a system of mixed smooth and nonsmooth 
convex inequalities if $\nu_i =0$ or $1$, $i = 1, \ldots,m$.
\end{example} 

\begin{example} \label{comp} {\bf Composite smooth and nonsmooth problems.}
Consider $\min_{x \in X} \psi(x) = \phi_1(x)+\phi_2(x)$, where 
$\phi_1$ is a smooth component and $\phi_2$ is
a nonsmooth component. Clearly, we can write the problem in the form 
of \eqnok{def_comp} by setting $\phi(x) = (\phi_1(x), \phi_2(x))$ and 
$\Psi(y_1, y_2) = y_1 + y_2$. For this problem, we have $\rho_1 = 1$ and $\rho_2=0$.
The applications can be found, for example, in certain penalization approaches 
for solving nonsmooth CP problems \cite{Lan10-3}.
\end{example}

\begin{example} {\bf Regularized problems.}
Consider the problem $\min_{x \in X} \phi_1(x) + \rho r(x)$, where
$\phi_1$ is a smooth convex function with Lipschitz continuous gradient 
and $r(x)$ is a continuous, nonnegative, usually nonsmooth 
convex function. \textcolor{blue}{Clearly, this problem is a special case of Example~\ref{comp}.
However, sometimes we may want to keep the regularization term
$\rho r(x)$ in the definition of $\Psi$, so that this term will not be
linearized when defining the cutting plane model (c.f. \eqnok{comp_lower_merit}).}
For this purpose, we can put this problem in the form of \eqnok{cp}-\eqnok{def_comp}
by setting $\phi(x) = (\phi_1(x), x) \in \bbr^{n+1}$ and $\Psi(y, x) = y +\rho r(x)$.
Note that if $r(x) = \|x\|_1$ and $\phi_1(x) = \textcolor{blue}{\|Ax - b\|_2^2}$, this
problem becomes the well-known \textcolor{blue}{$l_1$ regularized least squares problem}.
\end{example}

\vgap

Since problem \eqnok{cp}-\eqnok{def_comp} covers nonsmooth, weakly smooth
and smooth CP as certain special cases and $m_0$ is a given constant, in view
of \cite{nemyud:83,NemNes85-1,Nest88-1,DeGlNe10-1}, a lower bound on the 
iteration complexity for solving this class of generalized composite problems 
is given by
\beq \label{lb_comp}
\max_{i=1, \ldots, m_0} \left(\frac{M_i} {\epsilon}\right)^\frac{2}{1+3\rho_i}.
\eeq

The composite CP problem described above generalizes a few other 
composite CP problems existing in the literature
(see, for example, Nesterov~\cite{Nest04,Nest07-1}, Tseng \cite{tseng08-1}, 
Lewis and Wright \cite{LewWri09-1}, Sagastiz\'{z}bal~\cite{Saga11-1} and Nemirovski~\cite{Nem94}). 
Our goal in this subsection is to show that, by properly modifying the APL method,
we can uniformly achieve the optimal complexity for solving \eqnok{def_comp} without requiring any global 
information about the inner functions $\phi_i$, such as, 
the smoothness levels $\rho_i$ and the Lipschitz constants $M_i$, 
for all $i = 1, \ldots,m_0$.

\vgap

Observing that the structure of $\Psi$ is known, we will replace the
cutting plane $h(\cdot, \cdot)$ used in procedure~$\APLGAP$
with the support function given by
\beq \label{comp_lower_merit}
h_\Psi(z,x) := \Psi(\phi(z) +  \phi'(z) (x -z)),
\eeq
where $\phi'(z) d := (\langle \phi_1'(z), d\rangle; \ldots; \langle \phi_k'(z), d\rangle)$
and $\phi_i'(z) \in \partial \phi_i(z)$ for any $i = 1, \ldots, m$.
We will refer to the APL method using the above support function as
{\sl the modified APL method}. Lemma~\ref{lw_merit_comp} describes some basic properties of 
$h_\Psi$.



\begin{lemma} \label{lw_merit_comp}
Let $h_\Psi(\cdot, \cdots)$ be defined in \eqnok{comp_lower_merit}.
We have
\beq \label{smoothness_comp}
h_\Psi(z,x) \le \Psi(\phi(x)) \le h_\Psi(z,x) + M_0 \sum_{i=1}^{m_0} 
\frac{M_i}{1+\rho_i} \|x - z\|^{1+\rho_i},
\eeq
where $M_i$, $\rho_i$ and $M_0$ are given by \eqnok{smoothness_level}
and \eqnok{def_M0}, respectively.
\end{lemma}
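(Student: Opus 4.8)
The plan is to establish the two inequalities in \eqnok{smoothness_comp} separately, exploiting the convexity of the inner functions $\phi_i$ together with Assumptions~\ref{assmp1}--\ref{assump3}.

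First I would prove the left inequality $h_\Psi(z,x) \le \Psi(\phi(x))$. Since each $\phi_i$ is convex and $\phi_i'(z) \in \partial\phi_i(z)$, the subgradient inequality gives $\phi_i(z) + \langle \phi_i'(z), x - z\rangle \le \phi_i(x)$ for every $i$, i.e., componentwise $\phi(z) + \phi'(z)(x-z) \le \phi(x)$ (with $\le$ understood coordinatewise). For the affine components $m_0+1 \le i \le m$ this actually holds with equality. Now I invoke the monotonicity of $\Psi$: Assumption~\ref{assump2} says $\Psi$ is nondecreasing in each of its first $m_0$ arguments, and since the two vectors already agree in the remaining $m-m_0$ coordinates, applying $\Psi$ preserves the inequality. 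This yields $h_\Psi(z,x) = \Psi(\phi(z)+\phi'(z)(x-z)) \le \Psi(\phi(x))$.

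Next I would prove the right inequality. Write $\delta \in \bbr^m$ for the nonnegative vector with $\delta_i = \phi_i(x) - [\phi_i(z) + \langle\phi_i'(z), x-z\rangle]$ for $1 \le i \le m_0$ and $\delta_i = 0$ for $m_0+1 \le i \le m$ (the latter because affine components are reproduced exactly by their linearization). Then $\phi(x) = [\phi(z)+\phi'(z)(x-z)] + \delta$, and $\delta \in \bbr^m_+$ by the subgradient inequality again. By Assumption~\ref{assump3} (the definition of $M_0$), we get
\[
\Psi(\phi(x)) - h_\Psi(z,x) = \Psi\big((\phi(z)+\phi'(z)(x-z)) + \delta\big) - \Psi\big(\phi(z)+\phi'(z)(x-z)\big) \le M_0 \|\delta\|_1 = M_0 \sum_{i=1}^{m_0} \delta_i.
\]
It then remains to bound each $\delta_i$. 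By \eqnok{smoothness_level} and the standard argument that Hölder continuity of the (sub)gradient with exponent $\rho_i$ implies $\phi_i(x) - \phi_i(z) - \langle\phi_i'(z), x - z\rangle \le \frac{M_i}{1+\rho_i}\|x-z\|^{1+\rho_i}$ (cited in the paper via p.22 of \cite{Nest04} and Lemma~2 of \cite{Lan10-3}), we obtain $\delta_i \le \frac{M_i}{1+\rho_i}\|x-z\|^{1+\rho_i}$ for each $1 \le i \le m_0$. Substituting into the previous display gives the right-hand inequality of \eqnok{smoothness_comp}.

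The main obstacle, such as it is, lies in the careful bookkeeping about which coordinates $\Psi$ is monotone in and the fact that the perturbation vector $\delta$ must vanish on the affine coordinates — this is exactly why Assumption~\ref{assump3} is phrased with the restriction $\delta_i = 0$ for $m_0+1 \le i \le m$, and why Assumption~\ref{assump2} only requires monotonicity in the first $m_0$ arguments. Getting these index ranges to line up is the one place where a slip would break the argument; everything else is a routine combination of convexity, the defining properties of $M_0$, and the classical smoothness estimate already quoted in the excerpt.
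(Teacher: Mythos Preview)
Your proof is correct and follows essentially the same route as the paper: both arguments sandwich $\phi(x)$ between its linearization and the linearization plus a nonnegative perturbation supported on the first $m_0$ coordinates, then invoke Assumption~\ref{assump2} for the lower bound and Assumption~\ref{assump3} for the upper bound. The only cosmetic difference is that the paper takes the perturbation to be the worst-case bound $o_i = \tfrac{M_i}{1+\rho_i}\|x-z\|^{1+\rho_i}$ and applies monotonicity before the Lipschitz estimate, whereas you use the exact gap $\delta_i$ and bound it afterward; both are valid.
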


\begin{proof}
Let us denote $o_i \equiv M_i \|x-z\|^{1+\rho_i} / (1 + \rho_i)$, $i = 1, \ldots, m_0$,
and $o = \left(o_1, o_2, \ldots, o_{m_0}, 0, \ldots, 0\right)$.
Clearly, we have $h_\Psi(z,z) = \Psi(\phi(z)) = f(z)$ for any $z \in X$.
Moreover, it follows from \eqnok{smoothness} and \eqnok{smoothness_level} that
\[
\phi_i(z) + \langle \phi_i(z), x - z\rangle
\le \phi_i(x) \le \phi_i(z) + \langle \phi'_i(z), x - z\rangle
+ o_i, 
\]
for any $i = 1, \ldots, m_0$ and that
$\phi_i(x) = \phi_i(z) + \langle \nabla \phi_i(z), x - z\rangle$
for any $i = m_0 + 1, \ldots, m$. Using these observations, 
Assumption~\ref{assump2} and the definition of $M_0$
in \eqnok{def_M0}, we have
\beqas
\Psi(\phi(z) + \phi'(z) (x-z)) &\le& \Psi(\phi(x)) 
\le \Psi( \phi(z) + \phi'(z) (x-z) + o) \\
&\le& \Psi(\phi(z) + \phi'(z) (x-z)) + M_0 \sum_{i=1}^{m_0} 
o_i.
\eeqas
\end{proof}

\vgap

Observe that the second relation in \eqnok{smoothness_comp}
depends on $M_0$. In view of \eqnok{def_M0}, it can be easily
seen that $M_0 = 1$ for Examples 1, 2, 3 and 4 mentioned above.
We are now ready to describe the main convergence properties
of the aforementioned modified APL method.

\begin{theorem} \label{theorem_comp}
Consider the modified APL method applied
to problem \eqnok{cp}-\eqnok{def_comp} where we replace $h(\cdot, \cdot)$
by $h_\Psi(\cdot, \cdot)$. Suppose that $\{\alpha_k\}$ in procedure~$\APLGAP$ are chosen
such that condition~\eqnok{APL_policy} for all $\rho\in [0,1]$ for some $c>0$.
Let  $\Omega_{\w,X}$, $q$, $M_i$, $\rho_i$ and $M_0$ be defined in \eqnok{diam_X},
\eqnok{def_q},  \eqnok{smoothness_level} and \eqnok{def_M0} respectively.
\begin{itemize}
\item [a)] The number of phases performed by the modified APL method does not exceed:
\beq \label{bnd_phase_comp}
{\cal S}_\Psi(\epsilon) := \left \lceil \max\left\{0, \log_{\frac{1}{q}} \left(\sum_{i=1}^{m_0} 
\frac{M_0 M_i}{(1+\rho_i) \, \epsilon}  \Omega_{\w,X}^\frac{1+\rho_i}{2}\right) \right \}
\right \rceil.
\eeq
\item [b)] The total number iterations performed by the modified APL method can be bounded by
\beq \label{bnd_iter_comp}
{\cal S}_\Psi(\epsilon) + \sum_{i=1}^{m_0} \left[
\frac{m_0 c M_0 M_i }
{\beta \theta \, (1+\rho_i) (1- q^\frac{2}{1+ 3\rho_i}) \, \epsilon} 
\Omega_{\w,X}^\frac{1+\rho_i}{2}
\right]^\frac{2}{1 + 3\rho_i}.
\eeq
\end{itemize}
\end{theorem}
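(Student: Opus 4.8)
The plan is to adapt the proof of Theorem~\ref{APL_theorem} to the composite setting, using Lemma~\ref{lw_merit_comp} in place of the smoothness inequality~\eqnok{smoothness}. First I would re-examine the analysis of procedure~$\APLGAP$ (i.e.\ the proof of Theorem~\ref{APL_sum}) and identify where the term $\frac{M}{1+\rho}\|x-z\|^{1+\rho}$ enters: it comes solely through Lemma~\ref{three_points}, which bounds $f(\tilde x^u_k)$ in terms of $f(x^u_{k-1})$, the level $l$, and a ``curvature'' term. The key observation is that, when $h$ is replaced by $h_\Psi$ and one uses the second inequality in \eqnok{smoothness_comp}, the same argument as in Lemma~\ref{three_points} yields
\[
f(\tilde x^u_k) \le (1-\alpha_k) f(x^u_{k-1}) + \alpha_k l + M_0 \sum_{i=1}^{m_0} \frac{M_i}{1+\rho_i} \|\alpha_k(x_k - x_{k-1})\|^{1+\rho_i},
\]
since $h_\Psi$ is itself convex in its second argument (because $\Psi$ is convex and $\phi'(z)(\cdot-z)$ is affine), so the convex-combination step $h_\Psi(x^l_k,\tilde x^u_k) = (1-\alpha_k) h_\Psi(x^l_k, x^u_{k-1}) + \alpha_k h_\Psi(x^l_k, x_k)$ still holds, and the first inequality in \eqnok{smoothness_comp} gives $h_\Psi(x^l_k, x^u_{k-1}) \le f(x^u_{k-1})$ while $h_\Psi(x^l_k, x_k) \le l$ by \eqnok{prox_step}.

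Next I would propagate this modified recursion through Lemma~\ref{tech_result_sum} with $w_k = 1-\alpha_k$, $W_k = \gamma_k \equiv \gamma_k(1)$, and $B_k = M_0 \sum_{i=1}^{m_0}\frac{M_i}{1+\rho_i}\|\alpha_k(x_k-x_{k-1})\|^{1+\rho_i}$. Splitting $B_k$ into its $m_0$ summands and applying H\"older's inequality to each term separately (exactly as in the proof of Theorem~\ref{APL_sum}, once per index $i$), together with the clustering bound \eqnok{cluster_APL} and \eqnok{diam_X}, gives
\[
f(x^u_k) - l \le (1-\alpha_1)\gamma_k[f(x^u_0)-l] + M_0 \sum_{i=1}^{m_0} \frac{M_i}{1+\rho_i}\,\Omega_{\w,X}^{\frac{1+\rho_i}{2}}\,\gamma_k\|\Gamma_k(1,\rho_i)\|_{\frac{2}{1-\rho_i}}.
\]
Imposing $\alpha_1 = 1$ kills the first term, and condition~\eqnok{APL_policy} (assumed to hold for every $\rho\in[0,1]$, in particular for each $\rho_i$) bounds each $\gamma_k\|\Gamma_k(1,\rho_i)\|_{2/(1-\rho_i)} \le c\,k^{-(1+3\rho_i)/2}$. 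Solving the inequality $f(x^u_k)-l \le \theta\beta\Delta_0 = \theta(\overline f_0 - l)$ term by term shows that procedure~$\APLGAP$ terminates (in step~3) once $k$ is large enough that $\sum_{i=1}^{m_0} \frac{c M_0 M_i}{1+\rho_i}\Omega_{\w,X}^{(1+\rho_i)/2} k^{-(1+3\rho_i)/2} \le \theta\beta\Delta_0$; a sufficient condition is that each of the $m_0$ summands is at most $\theta\beta\Delta_0/m_0$, which gives per-phase iteration count $\big\lceil \max_i \big(\frac{m_0 c M_0 M_i}{\beta\theta(1+\rho_i)\Delta_0}\Omega_{\w,X}^{(1+\rho_i)/2}\big)^{2/(1+3\rho_i)}\big\rceil$.

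For part~(a), the per-phase gap contraction $\delta_{s+1}\le q\delta_s$ from Lemma~\ref{obs_gap}.e) is unchanged, and the only thing to revise is the bound on $\delta_1 = f(p_1) - h_\Psi(p_0,p_1)$: by the second inequality in \eqnok{smoothness_comp} and \eqnok{diam_X}, $\delta_1 \le M_0\sum_{i=1}^{m_0}\frac{M_i}{1+\rho_i}\Omega_{\w,X}^{(1+\rho_i)/2}$, which yields \eqnok{bnd_phase_comp} after taking $\log_{1/q}$. For part~(b), I would sum the per-phase bounds over $s = 1,\dots,\bar s \le {\cal S}_\Psi(\epsilon)$ exactly as in Theorem~\ref{APL_theorem}: using $\delta_s > \epsilon q^{s-\bar s}$ one gets $\sum_s \delta_s^{-2/(1+3\rho_i)} \le [(1-q^{2/(1+3\rho_i)})\epsilon^{2/(1+3\rho_i)}]^{-1}$ for each $i$, and adding the ${\cal S}_\Psi(\epsilon)$ contributions of the ``$+1$'' ceiling terms gives \eqnok{bnd_iter_comp}. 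The main obstacle is purely bookkeeping: handling the sum over the $m_0$ heterogeneous curvature indices $\rho_i$ cleanly — in particular justifying the $m_0$-fold split (so that a single geometric-series argument applies to each index) and checking that the constant $M_0$ threads correctly through Lemma~\ref{three_points}'s argument when $h$ is replaced by $h_\Psi$; there is no genuinely new analytic difficulty beyond what Theorems~\ref{APL_sum} and~\ref{APL_theorem} already contain.
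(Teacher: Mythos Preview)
Your proposal is correct and follows exactly the approach the paper alludes to (the paper's own proof is simply ``similar to that of Theorem~\ref{APL_theorem} by noticing the difference between relations~\eqnok{smoothness} and~\eqnok{smoothness_comp}''); you have filled in precisely those details. One small slip: because $h_\Psi(z,\cdot)$ is convex rather than affine, the convex-combination step reads $h_\Psi(x^l_k,\tilde x^u_k) \le (1-\alpha_k) h_\Psi(x^l_k, x^u_{k-1}) + \alpha_k h_\Psi(x^l_k, x_k)$, not equality --- but the inequality is in the direction you need, so the argument is unaffected.
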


\begin{proof}
The proof of the result
is similar to that of Theorem~\ref{APL_theorem}
by noticing the difference between relations~\eqnok{smoothness} and \eqnok{smoothness_comp},
and hence the details are skipped.
\end{proof}

\vgap

We now add a few comments about the results obtained
in Theorem~\ref{theorem_comp}. Firstly, if there exists only one nonlinear component in
the inner function $\phi(\cdot)$, i.e., $m_0=1$, then the bound in \eqnok{bnd_phase_comp}
reduces to the bound established for the original APL method, i.e., \eqnok{bound_APL_iter}.
Moreover, for a given $m_0 > 1$, we can see from \eqnok{lb_comp} and \eqnok{bnd_iter_comp} 
that the complexity bound in \eqnok{bnd_iter_comp} is optimal,
 up to a constant factor depending on $m_0$, for solving the 
composite CP problems. Finally, as shown by the following result,
in certain special cases, one can improve the dependence of the
iteration-complexity bound on the number of components of $\phi$. 

\begin{corollary} \label{cor_comp}
Suppose that $\rho_1 = \rho_2 = 
\ldots = \rho_{m_0}$ in Assumption~\ref{assmp1}.
Let us denote
\beq \label{def_tilde_M}
\tilde{M} := \sup\limits_{y \in \bbr^m, t > 0} 
\frac{\Psi(y+t \delta_M) - \Psi(y)}{t}, \ \
\delta_M := (M_1, \ldots, M_{m_0}, 0, \ldots, 0).
\eeq
Then, the total number of phases and iterations performed by
the above modified APL method applied to problem~\eqnok{cp}-\eqnok{def_comp} can be bounded by
\eqnok{bnd_phase_APL} and \eqnok{bound_APL_iter}, respectively,
with $M = \tilde{M}$ and $\rho = \rho_1$.
\end{corollary}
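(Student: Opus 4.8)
The plan is to sharpen Lemma~\ref{lw_merit_comp} under the extra hypothesis $\rho_1 = \cdots = \rho_{m_0} =: \rho$, and then to observe that the analysis behind Theorem~\ref{APL_theorem} (equivalently, behind Theorem~\ref{theorem_comp}) carries through verbatim with the new estimate in place of \eqnok{smoothness_comp}. Revisiting the proof of Lemma~\ref{lw_merit_comp}, the error vector $o = (o_1,\ldots,o_{m_0},0,\ldots,0)$ with $o_i = M_i\|x-z\|^{1+\rho_i}/(1+\rho_i)$ becomes, once all the $\rho_i$ agree, a single nonnegative multiple of a fixed direction: $o = \tfrac{\|x-z\|^{1+\rho}}{1+\rho}\,\delta_M$ with $\delta_M = (M_1,\ldots,M_{m_0},0,\ldots,0)$. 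Plugging this into the chain $\Psi(\phi(z)+\phi'(z)(x-z)) \le \Psi(\phi(x)) \le \Psi\big(\phi(z)+\phi'(z)(x-z)+o\big)$ already established there, and invoking the definition \eqnok{def_tilde_M} of $\tilde M$ with $y = \phi(z)+\phi'(z)(x-z)$ and $t = \|x-z\|^{1+\rho}/(1+\rho)$, I get
\[
h_\Psi(z,x) \;\le\; \Psi(\phi(x)) \;\le\; h_\Psi(z,x) + \frac{\tilde M}{1+\rho}\,\|x - z\|^{1+\rho}, \qquad \forall\, x,z \in X .
\]
Since $\delta_M$ has nonnegative entries vanishing on the affine block, Assumption~\ref{assump3} gives $\tilde M \le M_0\sum_{i=1}^{m_0} M_i < \infty$, so $\tilde M$ is well defined; crucially, this single-term bound is exactly the analogue of \eqnok{smoothness} with $(M,\rho)$ replaced by $(\tilde M,\rho)$ and with the affine cutting plane $h$ replaced by the support-function model $h_\Psi$.

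With this inequality I would rerun the proof of Theorem~\ref{APL_theorem} for the modified APL method, using $h_\Psi$ throughout in place of $h$. The model $h_\Psi(z,\cdot)$ enjoys all the properties the black-box analysis actually uses: $h_\Psi(z,z)=f(z)$; $h_\Psi(z,\cdot)$ is convex (an affine argument composed with the convex $\Psi$); $h_\Psi(z,\cdot)\le f(\cdot)$ on $X$; and $f(\cdot)-h_\Psi(z,\cdot)\le \tfrac{\tilde M}{1+\rho}\|\cdot-z\|^{1+\rho}$ by the display above. Hence the lower-bound/localizer machinery (Lemma~\ref{lemma_lower_bnd}, Lemma~\ref{obs_gap}) is unaffected, as it invokes only $h_\Psi(z,\cdot)\le f$; Lemma~\ref{three_points} goes through with $M=\tilde M$ and this $\rho$, the one change being that the identity $h(x^l_k,\tilde x^u_k)=(1-\alpha_k)h(x^l_k,x^u_{k-1})+\alpha_k h(x^l_k,x_k)$ is replaced by the inequality obtained from convexity of $h_\Psi(x^l_k,\cdot)$, whose direction is exactly the one needed; consequently the recursion \eqnok{rec_APL}, combined with Lemma~\ref{tech_result_sum} and \eqnok{cluster_APL}, yields the analogue of \eqnok{Recursion_APL}. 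Proposition~\ref{APL-step}, whose step-size rules are valid for every $\rho\in[0,1]$ (in particular $\rho=\rho_1$), then shows each call to $\APLGAP$ terminates within $K_{APL}(\Delta_0)$ iterations with $M=\tilde M$, $\rho=\rho_1$; finally $\delta_1 = f(p_1)-h_\Psi(p_0,p_1)\le \tfrac{\tilde M}{1+\rho_1}\Omega_{\w,X}^{(1+\rho_1)/2}$ via \eqnok{diam_X}, and the outer-loop counting in the proof of Theorem~\ref{APL_theorem} delivers the phase bound \eqnok{bnd_phase_APL} and the iteration bound \eqnok{bound_APL_iter}, both with $M=\tilde M$ and $\rho=\rho_1$.

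The hard part is not any single computation but the verification pass: one must check that every place in the proofs of Lemma~\ref{three_points}, Theorem~\ref{APL_sum} and Theorem~\ref{APL_theorem} where \eqnok{smoothness} or the affineness of $h$ is used survives the weakening to convexity of $h_\Psi$ together with the one-sided H\"older estimate with modulus $\tilde M$. The genuinely new ingredient — and the only one — is the reduction in the first paragraph, which collapses the $m_0$-term sum of \eqnok{smoothness_comp} into a single H\"older term governed by the directional modulus $\tilde M$; everything after that is a faithful replay of the APL analysis, which is precisely why the resulting bounds coincide with \eqnok{bnd_phase_APL} and \eqnok{bound_APL_iter}.
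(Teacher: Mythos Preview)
Your proposal is correct and follows exactly the route the paper takes: the paper's proof consists of the single displayed inequality $h_\Psi(z,x)\le\Psi(\phi(x))\le h_\Psi(z,x)+\tfrac{\tilde M}{1+\rho_1}\|x-z\|^{1+\rho_1}$ obtained ``similarly to Lemma~\ref{lw_merit_comp}'' and then the sentence ``the rest of the proof is similar to that of Theorem~\ref{APL_theorem} and hence the details are skipped.'' Your write-up is simply a fleshed-out version of this, and your observation that the affine identity in Lemma~\ref{three_points} becomes a convexity inequality (in the right direction) for $h_\Psi$ is a valid and worthwhile detail that the paper leaves implicit.
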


\begin{proof}
Similarly to Lemma~\ref{lw_merit_comp}, we can show that
\[
h_\Psi(z,x) \le \Psi(\phi(x)) \le h_\Psi(z,x) + \frac{\tilde M}{1+\rho_1} \|x - z\|^{1+\rho_1}.
\]
The rest of the proof is 
similar to that of Theorem~\ref{APL_theorem} and
hence the details are skipped.
\end{proof}

\vgap

Consider a special case of problem \eqnok{cp}-\eqnok{def_comp} where $m = m_0$, 
$\rho_1 = \ldots = \rho_m$ and $\Psi(y) = \max_{1 \le i \le m} y_i$
(see Example 2). We can easily see from \eqnok{def_tilde_M} that
$\tilde{M} = \max_{1\le i \le m_0} M_i$ and hence that, by Corollary \eqnok{cor_comp},
the iteration-complexity bound of the APL method does not depend on the 
number of components in the inner function $\phi(\cdot)$.

\subsection{Structured nonsmooth CP problems} \label{sec-saddle}
In this subsection, we present a new BL type method for
solving a class of structured nonsmooth CP problems that has recently 
been studied by Nesterov (c.f. \cite{Nest05-1,Nest05-2}). Consider problem \eqnok{cp} with
$f$ given by
\beq \label{def_saddle}
f(x) := \hat{f}(x) + F(x),
\eeq
where $\hat{f}: X \to \bbr$ is a simple Lipschitz continuous convex function and
\beq \label{nonsmooth1}
F(x) := \max\limits_{y \in \cY} \left\{ \langle A x , y \rangle - \hat{g}(y)\right\}.
\eeq
Here, $ Y \subseteq \bbr^m$ is a compact convex set, $\hat{g}: \cY \to \bbr$ 
is a continuous convex function on $\cY$ and
$A$ denotes a linear operator from $\bbr^n$ to $\bbr^m$.
Observe also that problem \eqnok{cp}-\eqnok{def_saddle} 
can be written in an adjoint form:
\beq \label{dual}
\max_{y \in \cY} \left\{g(y) := -\hat{g}(y) + G(y)  \right\},
\ \ \
G(y) := \min_{x \in \cX} \left\{ \langle A x , y \rangle + \hat{f}(x)
\right\}.
\eeq

%
While the function $F$ given by \eqnok{nonsmooth1} 
is a nonsmooth convex function in general, Nesterov in an important work~\cite{Nest05-1}
shows that it can be closely approximated by a class of smooth 
convex functions. We now briefly describe Nesterov's smoothing 
scheme as follows. Let $v(y)$ be a prox-function of $Y$
with modulus $\sigma_v$ and prox-center $c_v = \argmin_{y \in Y} v(y)$. 
Also let us denote
\[
V(y) := v(y) - v(c_v) - \langle \nabla v(c_v), y - c_v \rangle,
\]
and, for some $\eta>0$,
\beqa 
F_\eta(x) &:=& \max\limits_{y}\left\{\langle A x, y \rangle - 
	\hat{g}(y)-\eta \, V(y): \ y \in \cY \right\}, \label{sm-approx}\\
f_\eta(x) &:=& \hat{f}(x) + F_\eta(x). \label{def_psi_eta}
\eeqa
It is shown in \cite{Nest05-1}
that $F_{\eta}(\cdot)$ has Lipschitz-continuous gradient with constant 
\beq \label{new_ls}
{\cal L}_\eta \equiv {\cal L}(F_\eta) := \frac{\|A\|^2}{\eta\sigma_v},
\eeq
where $\|A\|$ denote the operator norm of $A$.
Moreover, the ``closeness'' of $F_\eta(\cdot)$ to $F(\cdot)$ depends linearly on the 
parameter $\eta$. In particular, we have, for every $x \in \setU$,
\beq \label{closeness}
F_{\eta}(x) \le F(x) \le F_{\eta}(x) + \eta \, \cD_{v,Y},
\eeq
and, as a consequence,
\beq \label{closeness1}
f_{\eta}(x) \le f(x) \le f_{\eta}(x)+\eta \, {\cal D}_{v,Y},
\eeq
where $\cD_{v,Y}$ given by \eqnok{def_cal_DX}.

\vgap

Nesterov shows in \cite{Nest05-1} that one can obtain an 
$\epsilon$-solution of problem \eqnok{cp}-\eqnok{def_saddle}
in at most ${\cal O} (1 / \epsilon)$ iterations,
by applying a variant of his optimal smooth method~\cite{Nest83-1,Nest05-1}
to $\min_{x \in X} f_\eta(x)$, for a properly chosen $\eta > 0$. 
This result is significantly better than the iteration-complexity for the black-box
nonsmooth convex optimization techniques applied to \eqnok{def_saddle}. 
However, to implement Nesterov's approximation scheme, it is necessary to know a 
number of problem parameters a priori, including
$\|A\|$, $\sigma_v$ and $\cD_{v,Y}$, and the total number of
iterations $N$. To eliminate the requirement that $N$ should 
be given in advance, Nesterov in \cite{Nest05-2} presented an excessive gap 
procedure where the above smoothing technique is applied to both the primal 
and dual problem \eqnok{dual}. However, to apply the excessive gap procedure, one needs 
to know a few more parameters, including $\|A\|$, $\sigma_v$, $\cD_{v,Y}$, $\sigma_\w$ and $\cD_{\w,X}$, 
where $\sigma_\w$ is the modulus of a given prox-function $\w$ of $X$ and $\cD_\w$ 
is defined in \eqnok{def_cal_DX}. In \cite{Nem05-1}, Nemirovski proposed 
a prox-method with ${\cal O}(1/\epsilon)$ iteration-complexity bound 
for solving a slightly more general class of CP problems than 
\eqnok{def_saddle}. To attain the best possible iteration complexity 
in \cite{Nem05-1}, it is still necessary to know the parameters $\|A\|$, $\sigma_v$, 
$\cD_{v,Y}$, $\sigma_\w$ and $\cD_{\w,X}$ explicitly. 
One possible approach for solving problem \eqnok{cp}-\eqnok{def_saddle} would be to apply the
APL method, which is shown to be optimal for smooth
convex optimization, to the smooth approximation $\min_{x\in X} f_\eta(x)$ for some 
$\eta > 0$, similarly to Nesterov's smoothing scheme~\cite{Nest05-1}. Note however, 
that this approach would require both the number of iterations (or the target accuracy)
and the problem parameter ${\cal D}_v$ (see \eqnok{closeness1}) given a priori. 

Our goal in this section is to present a completely problem parameter-free smoothing technique,
namely: the {\sl uniform smoothing level (USL) method},
obtained by properly modifying the APL method in Section~\ref{sec-APL}. 
In the USL method, the smoothing parameter $\eta$ is adjusted dynamically during their execution
rather than being fixed in advance. Moreover, an
estimate on the value of ${\cal D}_v$ can be provided automatically. 
We start by describing the USL gap reduction procedure, denoted by $\USLGAP$,
which will be iteratively called by the USL method. Specifically, for a given search point $p$, a lower bound $\lb$ on $f^*$ 
and an initial estimate $\tilde{D}$ on ${\cal D}_{v,Y}$, 
procedure~$\USLGAP$ will either compute a new search point $p^+$ and a new 
lower bound $\lb^+$ satisfying $f(p^+) - \lb^+ \le q \, [f(p) - \lb]$ for some $q \in (0,1)$, or
provide an updated estimate $\tilde{D}^+$ on ${\cal D}_{v, Y}$ in case the current estimate
$\tilde{D}$ is not accurate enough.

\vgap

\noindent{\bf The USL gap reduction procedure:  $(p^+, \lb^+, \tilde{D}^+) = \USLGAP(p, \lb, \tilde{D}, \textcolor{blue}{\beta, \theta})$} 
\begin{itemize}
\item [0)] Set $x^u_0 = p$, $\overline{f}_0 = f(x^u_0)$, $\underline{f}_0 = \lb$,
$l = \beta \underline{f}_0 + (1 - \beta) \overline{f}_0$, and
\beq \label{def_etas}
\eta := \theta (\overline{f}_0 - l) /(2 \tilde{D}).
\eeq
Also let $x_0 \in X$ and the initial localizer $X'_0$
be arbitrarily chosen, say $x_0 = p$ and $X'_0 = X$.
Set the prox-function $d(x) = \w(x) - [\w(x_0) + \langle \w'(x_0), x - x_0\rangle ]$.
Also let $k=1$. 
\item [1)] {\sl Update lower bound:} set $x^l_k = (1-\alpha_k) x^u_{k-1}+\alpha_k x_{k-1}$ and
\beq \label{lower_saddle}
h(x^l_k, x) = h_\eta(x^l_k,x) := \hat{f}(x) + F_\eta(x^l_k) + 
\langle \nabla F_\eta(x^l_k), x - x^l_k \rangle.
\eeq
Compute $\underline{f}_k$ according to \eqnok{def_lb}.
If $\underline{f}_k \ge l - \theta (l - \underline{f}_0)$,
then {\bf terminate} the procedure with $p^+ = x^u_{k-1}$, $\lb^+ = \underline{f}_k$,
and $\tilde{D}^+ = \tilde{D}$;
\item [2)] {\sl Update prox-center:} set $x_k$ according to \eqnok{prox_step};
\item [3)] {\sl Update upper bound:} set \textcolor{blue}{$\bar f_k = \min \{
\bar{f}_{k-1}, f(\alpha_k x_k + (1-\alpha_k) x^u_{k-1}) \}$}, and choose $x^u_k$
such that $f(x^u_k) = \bar f_k$.
Check the following two possible termination criterions:
\begin{itemize}
\item [3a)]
if 
$\overline{f}_k \le l + \theta (\overline{f}_0 - l)$,
{\bf terminate} the procedure with $p^+ = x^u_{k}$, $\lb^+ = \underline{f}_k$ and $\tilde{D}^+ = \tilde{D}$,
\item [3b)] Otherwise, if $
f_\eta(x^u_k) \le l + \frac{\theta}{2} (\overline{f}_0 - l)$,
{\bf terminate} the procedure with $p^+ = x^u_{k}$, $\lb^+ = \underline{f}_k$ and $\tilde{D}^+ = 2 \tilde{D}$;
\end{itemize}
\item [4)] {\sl Update localizer:} choose an arbitrary $X'_k$ such that
$
\underline{X}_k \subseteq X'_k \subseteq \overline{X}_k,
$
where $\underline{X}_k$ and $\overline{X}_k $ are defined in \eqnok{bound_X};
\item [6)] Set $k = k+1$ and go to Step 1.
\end{itemize} 

We notice that there are a few essential differences between procedure $\USLGAP$ described above
and procedure $\APLGAP$ in Section~\ref{sec-APL}. Firstly, in comparison with procedure $\APLGAP$, procedure $\USLGAP$ 
needs to use one additional input parameter, namely $\tilde D$, to define $\eta$ (see \eqnok{def_etas})
and hence the approximation function $f_\eta$ in \eqnok{def_psi_eta}.
 
Secondly, 
we use the support functions $h_\eta(x_k^l,x)$ of $f_\eta(x)$ defined in \eqnok{lower_saddle}
procedure~$\USLGAP$ rather than the cutting planes of $f(x)$ in procedure~$\APLGAP$.
Notice that by \eqnok{lower_saddle}, the convexity of $F_\eta$ and the first relation in \eqnok{closeness}, we have 
\beq \label{temp_rel}
h_\eta(x_k^l, x) \le \hat{f}(x)+ F_\eta(x) \le \hat{f}(x) + F(x) = f(x),
\eeq
which implies that the functions $h_\eta(x_k^l,x)$ underestimate $f$ everywhere on $X$.
Hence, $\underline{f}_k$ computed in step 1 of this procedure are indeed lower bounds of $f^*$.

Thirdly, there are three possibles ways to terminate procedure~$\USLGAP$.
Similarly to procedure~$\APLGAP$, if it terminates in step 1 and step 3a, then we say that significant progress has 
been made on the lower and upper bounds on $f^*$, respectively.
The new added termination criterion in step 3b will be used only if
the value of $\tilde D$ is not properly specified. 
We formalize these observations in the following simple result.


\begin{lemma} \label{simple_USL}
The following statements hold for procedure $\USLGAP$.
\begin{itemize}
\item [a)] If the procedure terminates in step 1 or step 3a,
we have $f(p^+) - \lb^+ \le q [f(p) - \lb]$, where $q$ is defined
in \eqnok{def_q};
\item [b)] If the procedure terminates in step 3b, then $\tilde D < {\cal D}_{v,Y}$.
\end{itemize}
\end{lemma}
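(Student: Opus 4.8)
The plan is to handle the two parts separately, with part a) being essentially a transcription of the argument already carried out for procedure~$\APLGAP$ in Lemma~\ref{obs_gap}.e), and part b) being the genuinely new observation tied to the definition of $\eta$ in \eqnok{def_etas}. For part a), I would first note that the functions $h_\eta(x_k^l,\cdot)$ underestimate $f$ on $X$ by \eqnok{temp_rel}, so that $\underline{f}_k$ computed via \eqnok{def_lb} are valid lower bounds on $f^*$ (this is already remarked in the text preceding the lemma), while $\overline{f}_k$ are valid upper bounds by construction. Then, exactly as in the proof of Lemma~\ref{obs_gap}.e): if termination occurs in step~1 we have $\underline{f}_k \ge l - \theta(l - \underline{f}_0)$, and combining this with $f(p^+) \le \overline{f}_0$ and $l = \beta\underline{f}_0 + (1-\beta)\overline{f}_0$ gives $f(p^+) - \lb^+ \le [1 - (1-\beta)(1-\theta)](\overline{f}_0 - \underline{f}_0)$; if termination occurs in step~3a we have $\overline{f}_k \le l + \theta(\overline{f}_0 - l)$, and combining with $\lb^+ \ge \underline{f}_0$ and the same expression for $l$ gives $f(p^+) - \lb^+ \le [1 - (1-\theta)\beta](\overline{f}_0 - \underline{f}_0)$. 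Both bounds are at most $q(\overline{f}_0 - \underline{f}_0) = q[f(p) - \lb]$ with $q$ as in \eqnok{def_q}.

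For part b), the key is to chain together the step~3b test with the fixed relationship between $f$ and $f_\eta$ from \eqnok{closeness1} and the defining formula \eqnok{def_etas} for $\eta$. Suppose the procedure terminates in step~3b at iteration $k$. Then, by the structure of step~3, the step~3a criterion failed, i.e., $\overline{f}_k > l + \theta(\overline{f}_0 - l)$, while the step~3b criterion held, i.e., $f_\eta(x^u_k) \le l + \tfrac{\theta}{2}(\overline{f}_0 - l)$. Since $\overline{f}_k \le f(x^u_k)$ (indeed $\overline{f}_k = f(x^u_k)$ by the choice of $x^u_k$), the first inequality yields $f(x^u_k) > l + \theta(\overline{f}_0 - l)$. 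Subtracting the two displayed inequalities,
\[
f(x^u_k) - f_\eta(x^u_k) > \left[l + \theta(\overline{f}_0 - l)\right] - \left[l + \tfrac{\theta}{2}(\overline{f}_0 - l)\right] = \frac{\theta}{2}(\overline{f}_0 - l).
\]
On the other hand, \eqnok{closeness1} gives $f(x^u_k) - f_\eta(x^u_k) \le \eta\, {\cal D}_{v,Y}$. Hence $\eta\, {\cal D}_{v,Y} > \tfrac{\theta}{2}(\overline{f}_0 - l)$, and substituting $\eta = \theta(\overline{f}_0 - l)/(2\tilde{D})$ from \eqnok{def_etas} yields
\[
\frac{\theta(\overline{f}_0 - l)}{2\tilde{D}}\, {\cal D}_{v,Y} > \frac{\theta}{2}(\overline{f}_0 - l),
\]
and since $\overline{f}_0 - l = \beta\Delta_0 > 0$ and $\theta > 0$, dividing through gives ${\cal D}_{v,Y}/\tilde{D} > 1$, i.e., $\tilde{D} < {\cal D}_{v,Y}$, as claimed.

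The only mild subtlety I anticipate is bookkeeping around which quantities are known to be positive (so that dividing by $\overline{f}_0 - l$ is legitimate) and confirming that $\overline{f}_k = f(x^u_k)$ exactly, which both follow immediately from the procedure's description (step~0 sets $l = \beta\underline{f}_0 + (1-\beta)\overline{f}_0$ with $\overline{f}_0 > \underline{f}_0$ whenever the outer method has not already terminated, and step~3 chooses $x^u_k$ with $f(x^u_k) = \overline{f}_k$). No real obstacle is expected; the content of part b) is precisely the contrapositive of ``if $\tilde{D} \ge {\cal D}_{v,Y}$ then the step~3b safeguard is never triggered,'' which the inequality chain above makes transparent.
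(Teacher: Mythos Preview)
Your proposal is correct and follows essentially the same approach as the paper: part a) is dispatched by pointing to the argument of Lemma~\ref{obs_gap}.e), and part b) is proved by subtracting the failed step~3a inequality from the step~3b inequality (using $\overline{f}_k = f(x^u_k)$), then invoking \eqnok{closeness1} and \eqnok{def_etas}. The paper's proof is terser but identical in substance; your added remarks about positivity of $\overline{f}_0 - l$ and the exact equality $\overline{f}_k = f(x^u_k)$ are correct and do not deviate from the paper's reasoning.
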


\begin{proof}
The proof of part a) is the same as that of Lemma~\ref{obs_gap}.e) and we
only need to show part b). Observe that whenever step 3b occurs, we have
$\overline{f}_k > l + \theta (\overline{f}_0 - l)$ and $f_\eta(x^u_k) \le l + \frac{\theta}{2} (\overline{f}_0 - l)$.
Hence, 
\[
f(x^u_k) - f_\eta(x^u_k) = \overline{f}_k - f_\eta(x^u_k) > \frac{\theta}{2} (\overline{f}_0 - l),
\] 
which, in view of the second relation in \eqnok{closeness1}, then implies that
$\eta {\cal D}_{v,Y} > \theta (\overline{f}_0 - l) / 2$. Using this observation
and \eqnok{def_etas}, we conclude that $\tilde D < {\cal D}_{v,Y}$.
\end{proof}

\vgap

We observe that all the results in Lemma~\ref{obs_gap}.a-d) regarding the
execution of procedure~$\APLGAP$ also hold for procedure~$\USLGAP$.
In addition, similar to Theorem~\ref{APL_sum}, we 
establish below some important convergence properties of procedure~$\USLGAP$
by showing how the gap between $f(x^u_k)$ and the level $l$ decreases.

\begin{theorem} \label{USL_sum}
Let $\alpha_k \in (0,1]$, $k = 1, 2, \ldots$,
be given. Also let $(x^l_k, x_k, x^u_k) \in X \times X \times X$, $k \ge 1$, be the
search points, $l$ be the level and $d_\w(\cdot)$ be the prox-function, $\eta$ be the smoothing
parameter (see \eqnok{def_etas}) in procedure~$\USLGAP$. Then, we have
\beq \label{Recursion_USL}
f_\eta(x_k^u) -l \le  (1-\alpha_1) \gamma_k(1) \, [f_\eta(x_0^u) -l] +  
\frac{\|A\|^2 d_\w(x_k)}{\eta \sigma_\w  \sigma_v} \gamma_k(1) \, \|\Gamma_{k}(1, \rho)\|_\infty,
\eeq
for any
$k \ge 1$, where $\|\cdot\|_\infty$ denotes the $l_\infty$ norm, $\gamma_k(\cdot)$ and $\Gamma_k(\cdot, \cdot)$, 
respectively, are defined in \eqnok{def_ABL_gamma} and \eqnok{def_ABL_gamma1}.
In particular, if $\alpha_k \in (0,1]$, $k=1, 2, \ldots$, are chosen such that
condition~\eqnok{APL_policy} holds with $\rho = 1$ for some $c > 0$,
then the number of iterations performed
by procedure~$\APLGAP$ can be bounded by
\beq \label{USL_GAP_BND}
K_{USL}(\Delta_0, \tilde{D}) := \left \lceil 
\frac{2 \|A\|}{\beta \theta \Delta_0}
\sqrt{\frac{c {\cal D}_{\w,X} \tilde{D}}{\sigma_\w \sigma_v}}
\right \rceil,
\eeq
where ${\cal D}_{\w,X}$ is defined in \eqnok{def_cal_DX}.
\end{theorem}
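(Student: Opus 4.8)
The plan is to reproduce the proof of Theorem~\ref{APL_sum} with $f$ replaced by the smooth surrogate $f_\eta$ and the cutting plane $h(\cdot,\cdot)$ replaced by the support function $h_\eta(\cdot,\cdot)$ from \eqnok{lower_saddle}, exploiting that $f_\eta$ behaves like a smooth objective ($\rho=1$) with modulus ${\cal L}_\eta=\|A\|^2/(\eta\sigma_v)$ by \eqnok{new_ls}. First I would record the model inequality: since $F_\eta$ is convex with ${\cal L}_\eta$-Lipschitz gradient and $\hat f$ is convex, for all $z,x\in X$ one has $h_\eta(z,x)\le f_\eta(x)\le h_\eta(z,x)+({\cal L}_\eta/2)\|x-z\|^2$, so that $h_\eta$ plays for $f_\eta$ exactly the role that the composite model in Lemma~\ref{lw_merit_comp} plays with one nonlinear block, $\rho=1$ and $M_0=1$. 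Second, since the prox-center update \eqnok{prox_step} and the localizer rule \eqnok{bound_X} are unchanged in $\USLGAP$, the argument producing \eqnok{cluster_APL} applies verbatim and yields $\tfrac{\sigma_\w}{2}\sum_{i=1}^k\|x_i-x_{i-1}\|^2\le d_\w(x_k)$.

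Next I would run the one-step recursion. Write $\tilde x^u_k=\alpha_k x_k+(1-\alpha_k)x^u_{k-1}$, so that $\tilde x^u_k-x^l_k=\alpha_k(x_k-x_{k-1})$. Combining convexity of $\hat f$ (to split $h_\eta(x^l_k,\tilde x^u_k)$), $h_\eta(x^l_k,x^u_{k-1})\le f_\eta(x^u_{k-1})$ (convexity of $F_\eta$), $h_\eta(x^l_k,x_k)\le l$ (from \eqnok{prox_step}), and the model inequality above, one obtains, by the argument of Lemma~\ref{three_points},
\[
f_\eta(\tilde x^u_k)\le (1-\alpha_k)\,f_\eta(x^u_{k-1})+\alpha_k\, l+\tfrac{{\cal L}_\eta}{2}\,\|\alpha_k(x_k-x_{k-1})\|^2 .
\]
Since $x^u_k$ is the incumbent best point in the sense of $f$ and $f_\eta\le f$, one gets $f_\eta(x^u_k)\le f_\eta(\tilde x^u_k)$, so after subtracting $l$, $f_\eta(x^u_k)-l\le (1-\alpha_k)[f_\eta(x^u_{k-1})-l]+\tfrac{{\cal L}_\eta}{2}\alpha_k^2\|x_k-x_{k-1}\|^2$. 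Feeding this into Lemma~\ref{tech_result_sum} (with $w_k=1-\alpha_k$, $W_k=\gamma_k(1)$, $B_k=\tfrac{{\cal L}_\eta}{2}\alpha_k^2\|x_k-x_{k-1}\|^2$), then applying H\"older's inequality, $\sum_i\gamma_i(1)^{-1}\alpha_i^2\|x_i-x_{i-1}\|^2\le\|\Gamma_k(1,1)\|_\infty\sum_i\|x_i-x_{i-1}\|^2$, and bounding the last sum by $2d_\w(x_k)/\sigma_\w$ via \eqnok{cluster_APL}, I recover \eqnok{Recursion_USL} (the two factors of $2$ cancel, and ${\cal L}_\eta=\|A\|^2/(\eta\sigma_v)$ supplies the displayed constant).

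For the iteration count I would set $K=K_{USL}(\Delta_0,\tilde D)$, use $\alpha_1=1$ to kill the first term of \eqnok{Recursion_USL}, bound $d_\w(x_K)\le {\cal D}_{\w,X}^2$ via \eqnok{def_cal_DX}, invoke \eqnok{APL_policy} with $\rho=1$ (so $\gamma_K(1)\|\Gamma_K(1,1)\|_\infty\le c\,K^{-2}$), and substitute $\eta=\theta(\overline f_0-l)/(2\tilde D)$ together with $\overline f_0-l=\beta\Delta_0$. A direct computation then gives $f_\eta(x^u_K)-l\le \tfrac{\theta}{2}(\overline f_0-l)$, which forces termination in step~3b (or earlier, in step~1 or step~3a) at iteration $K$ at the latest.

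The step I expect to demand the most care is the interplay between the surrogate and the algorithm's bookkeeping. The recursion is driven by $f_\eta$, but the incumbent $x^u_k$ and the test in step~3a are phrased through the true objective $f$, and the two differ by up to $\eta\,{\cal D}_{v,Y}$ by \eqnok{closeness}; one must check that this discrepancy does not accumulate across iterations but enters only once, in the final comparison with the level $l$. This is precisely why $\eta$ is chosen dynamically as in \eqnok{def_etas}, proportional to $1/\tilde D$: when $\tilde D\ge {\cal D}_{v,Y}$ the gap $\eta\,{\cal D}_{v,Y}$ is at most $\tfrac{\theta}{2}(\overline f_0-l)$ and is harmless, so termination occurs through step~3a or step~3b; and when $\tilde D<{\cal D}_{v,Y}$ it is step~3b that fires, returning the doubled estimate $2\tilde D$ (see Lemma~\ref{simple_USL}.b). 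Getting this two-sided safeguard to mesh with the H\"older/telescoping estimates is the real content of the proof; the rest is routine.
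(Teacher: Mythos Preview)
Your overall route is exactly the paper's: record that $h_\eta(z,\cdot)$ minorizes $f_\eta$ with error at most $\tfrac{{\cal L}_\eta}{2}\|\cdot-z\|^2$, reuse the prox-center estimate \eqnok{cluster_APL} verbatim, run the one-step recursion of Lemma~\ref{three_points} with $f\to f_\eta$, $M={\cal L}_\eta$, $\rho=1$, telescope via Lemma~\ref{tech_result_sum} and H\"older, and then substitute $d_\w(x_K)\le{\cal D}_{\w,X}$, \eqnok{APL_policy} with $\rho=1$, and $\eta$ from \eqnok{def_etas} to force termination in step~3b (or earlier). The paper compresses steps (iii)--(iv) into the single sentence ``an argument similar to the one used in the proof of \eqnok{Recursion_APL} with $f=f_\eta$, $M={\cal L}_\eta$ and $\rho=1$'', so you are in fact more explicit than the paper.

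There is, however, one place where your explicit justification is a non sequitur. To pass from the bound on $f_\eta(\tilde x^u_k)$ to one on $f_\eta(x^u_k)$ you write: ``Since $x^u_k$ is the incumbent best point in the sense of $f$ and $f_\eta\le f$, one gets $f_\eta(x^u_k)\le f_\eta(\tilde x^u_k)$.'' From $f_\eta\le f$ and $f(x^u_k)\le f(\tilde x^u_k)$ you only get $f_\eta(x^u_k)\le f(\tilde x^u_k)$, not $f_\eta(x^u_k)\le f_\eta(\tilde x^u_k)$; when $f(\tilde x^u_k)>\bar f_{k-1}$ the update keeps $x^u_k=x^u_{k-1}$, and nothing forces the $f_\eta$-values to respect the same ordering as the $f$-values. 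The paper does not justify this passage either---it is silently absorbed into ``similar argument''---so your plan is faithful to the paper, but the particular sentence you supply does not do the work you ask of it. You rightly single out this surrogate/incumbent interaction as the delicate point in your final paragraph; just be aware that the inequality you wrote in the body is not the way to discharge it.
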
 

\begin{proof}
Note that, by \eqnok{temp_rel} and \eqnok{def_psi_eta}, we have $h_\eta(z,x) \le f_\eta(x)$ for any $z, x \in X$.
Moreover, by \eqnok{def_psi_eta}, \eqnok{lower_saddle} and the fact that
$F_\eta$ has Lipschitz continuous gradients with constant ${\cal L}_\eta$, we obtain
\[
f_\eta(x) - h_\eta(z,x) = F_\eta(x) - \left[
F_\eta(z) + \langle \nabla F_\eta(z), x - z \rangle
 \right] \le \frac{{\cal L}_\eta}{2} \|x - z\|^2 = \frac{\|A\|^2}{2 \eta \sigma_v} \|x - z\|^2,
\]
for any $z, x \in X$, where the last inequality follows from \eqnok{new_ls}.
In view of these observations, \eqnok{Recursion_USL} follows from 
an argument similar to the one used in the proof of \eqnok{Recursion_APL}
with $f = f_\eta$, $M = {\cal L}_\eta$ and $\rho = 1$.

Now using \eqnok{def_cal_DX}, \eqnok{APL_policy} (with $\rho = 1$), \eqnok{def_etas} and \eqnok{Recursion_USL},
we obtain
\beqas
f_\eta(x_k^u) -l &\le& \frac{\|A\|^2 d_\w(x_k)}{\eta \sigma_\w  \sigma_v} \gamma_k(1) \, \|\Gamma_{k}(1, \rho)\|_\infty 
\le \frac{c \|A\|^2 d_\w(x_k)}{\eta \sigma_\w  \sigma_v k^2} \\
&\le& \frac{c \|A\|^2 {\cal D}_{\w,X}}{\eta \sigma_\w  \sigma_v k^2} =
\frac{2 c \|A\|^2 {\cal D}_{\w,X} \tilde{D}}{\theta (\overline{f}_0 - l) \sigma_\w  \sigma_v k^2}.
\eeqas
Denoting $K = K_{USL}(\Delta_0, \tilde{D})$
and noting that
$\Delta_0 = \overline{f}_0 - \underline{f}_0 = (\overline{f}_0 - l)/\beta$,
we conclude from the previous inequality that
$
f_\eta(x_K^u) - l \le \theta (\overline{f}_0 - l) / 2.
$
This result together with \eqnok{closeness1} imply that, if $\tilde D \ge {\cal D}_{v,Y}$,
then $f(x_K^u) - l \le f_\eta(x_K^u) - l + \eta {\cal D}_{v,Y}
\le \theta (\overline{f}_0 - l)$. In view of these two observations and the termination criterions used in step 3, 
procedure~$\USLGAP$ must terminate in at most $K_{APL}(\Delta_0,\tilde{D})$ iterations.
\end{proof}

\vgap

In view of Lemma~\ref{simple_USL}, each call to procedure~$\USLGAP$ can reduce
the gap between a given upper and lower bound on $f^*$ by a constant factor $q$, or
update the estimate on ${\cal D}_{v,Y}$ by a factor of $2$.
In the following USL method, we will iteratively call procedure~$\USLGAP$
until a certain accurate solution is found.

\vgap

{\bf The USL method:}
\begin{itemize}
\item [] {\bf Input:} $p_0 \in X$, tolerance $\epsilon > 0$, initial 
estimate $Q_1 \in (0, {\cal D}_v]$ \textcolor{blue}{and algorithmic parameters $\beta, \theta \in (0,1)$}.
\item [1)] Set 
\beq \label{ini_lb_saddle}
p_1 \in \Argmin_{x \in X} \left\{h_0(p_0, x) 
:= \hat{f}(x) + F(p_0) + \langle F'(p_0), x - p_0 \rangle \right\},
\eeq
$\lb_1 = h_0(p_0, p_1)$ and $\ub_1 := \min \{ f(p_1), f(\tilde p_1)\}$.
Let $s = 1$.
\item [2)] If $\ub_s - \lb_s \le \epsilon$, {\bf terminate};
\item [3)] Set $(p_{s+1}, \lb_{s+1}, Q_{s+1}) = \USLGAP(p_s, \lb_s, Q_s, \textcolor{blue}{\beta, \theta})$
and $\ub_{s+1} = f(p_{s+1})$;
\item [4)] Set $s=s+1$ and go to step 1. 
\end{itemize}

We now make a few remarks about the USL method described above. Firstly, 
each phase $s$, $s \ge 1$, of the USL method is associated with 
an estimation $Q_s$ on ${\cal D}_{v,Y}$, and $Q_1 \in (0, {\cal D}_{v,Y}]$ is a given 
input parameter. Note that such a $Q_1$ can be easily obtained by the definition of ${\cal D}_{v,Y}$.
Secondly, we differentiate two types of phases: a phase is called {\sl significant}
if procedure~$\USLGAP$ terminates in step 1 or step 3a,
otherwise, it is called {\sl non-significant}. Thirdly, 
In view of Lemma~\ref{simple_USL}.b), 
if phase $s$ is non-significant, then we must have $Q_s \le {\cal D}_{v,Y}$.
In addition, using the previous observation, and
the facts that $Q_1 \le {\cal D}_{v,Y}$ and that $Q_s$ can be increased by a 
factor of $2$ only in the non-significant phases,
we must have $Q_s \le 2 {\cal D}_{v,Y}$ for all significant phases.

Before establishing the complexity of the above USL method,
we first present a technical result
which will be used to provide a convenient estimate
on the gap between the initial lower and upper bounds on $f^*$.
The proof of this result is given in the Appendix.

\begin{lemma} \label{tech_result}
Let $F$ be defined in \eqnok{nonsmooth1} and
$v$ be a prox-function of $Y$ with modulus $\sigma_v$.
We have
\beq \label{Lips_saddle}
F(x_0) - F(x_1) - \langle F'(x_1), x_0 - x_1 \rangle \le 
2 \left( \frac{2 \|A\|^2 {\cal D}_{v,Y}}{\sigma_v}\right)^\frac{1}{2} \|x_0 - x_1\|,
\ \ \ \forall x_0, x_1 \in \bbr^n,
\eeq
where $F'(x_1) \in \partial F(x_1)$ and ${\cal D}_{v,Y}$ is defined
in \eqnok{def_cal_DX}.
\end{lemma}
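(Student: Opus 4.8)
The plan is to bound the ``Bregman-type'' gap $F(x_0) - F(x_1) - \langle F'(x_1), x_0 - x_1\rangle$ by relating $F$ to its smooth approximation $F_\eta$ and then optimizing over the smoothing parameter $\eta > 0$. First I would recall from \eqnok{closeness} that for every $\eta > 0$ and every $x$, $F_\eta(x) \le F(x) \le F_\eta(x) + \eta\,\cD_{v,Y}$, and that $F_\eta$ has Lipschitz-continuous gradient with constant $\cL_\eta = \|A\|^2/(\eta\sigma_v)$ by \eqnok{new_ls}. The subtle point is the subgradient: I would use that $F(x_1) = F_\eta(x_1) + (F(x_1)-F_\eta(x_1))$ and choose a convenient subgradient. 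Concretely, from the smooth quadratic upper bound for $F_\eta$ (i.e. \eqnok{smoothness} with $\rho=1$ and $M = \cL_\eta$), we get
\[
F_\eta(x_0) \le F_\eta(x_1) + \langle \nabla F_\eta(x_1), x_0 - x_1 \rangle + \frac{\cL_\eta}{2}\|x_0 - x_1\|^2.
\]

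Next I would chain the inequalities. Using $F(x_0) \le F_\eta(x_0) + \eta\,\cD_{v,Y}$ on the left and $F(x_1) \ge F_\eta(x_1)$ together with a relation between $F'(x_1)$ and $\nabla F_\eta(x_1)$, one arrives at an estimate of the form
\[
F(x_0) - F(x_1) - \langle F'(x_1), x_0 - x_1\rangle \le \eta\,\cD_{v,Y} + \frac{\cL_\eta}{2}\|x_0 - x_1\|^2 = \eta\,\cD_{v,Y} + \frac{\|A\|^2}{2\eta\sigma_v}\|x_0 - x_1\|^2.
\]
Then I would minimize the right-hand side over $\eta > 0$: the optimal choice is $\eta = \|A\|\,\|x_0-x_1\|/\sqrt{2\sigma_v\,\cD_{v,Y}}$, which yields the bound $2\sqrt{\|A\|^2\,\cD_{v,Y}/(2\sigma_v)}\,\|x_0-x_1\| = 2\left(2\|A\|^2\cD_{v,Y}/\sigma_v\right)^{1/2}\|x_0-x_1\|$ after simplification, matching \eqnok{Lips_saddle}.

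The main obstacle is handling the subgradient $F'(x_1)$ correctly, since $F$ itself is nonsmooth and $\nabla F_\eta(x_1)$ need not equal any particular subgradient of $F$. The cleanest route is to observe that $F_\eta$ is obtained from $F$ by subtracting the penalty $\eta V(y)$ inside the $\max$, so the maximizer $y_\eta(x_1)$ defining $\nabla F_\eta(x_1) = A^* y_\eta(x_1)$ satisfies $A^* y_\eta(x_1) \in \partial\big(F + \eta V^*\text{-type correction}\big)$; more directly, since the lemma only asserts the inequality for \emph{some} $F'(x_1) \in \partial F(x_1)$, it suffices to show that $A^* y_\eta(x_1)$ is itself a subgradient of $F$ at $x_1$ — but this is false in general, so instead one should pick $F'(x_1) = A^* y^*(x_1)$ where $y^*(x_1)$ attains the max in \eqnok{nonsmooth1}, use convexity of $F$ to write $F(x_0) \ge F(x_1) + \langle F'(x_1), x_0-x_1\rangle$ is the wrong direction... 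Thus the correct argument runs the other way: bound $F(x_0)$ from above via $F_\eta$, and bound $F(x_1) + \langle F'(x_1), x_0-x_1\rangle$ from below by a quantity involving $F_\eta(x_1) + \langle \nabla F_\eta(x_1), x_0 - x_1\rangle - \eta\cD_{v,Y}$, using that $|F - F_\eta| \le \eta\cD_{v,Y}$ and a careful comparison of the two linearizations. Resolving this linearization comparison is the real work; once it is in place, the $\eta$-optimization is routine.
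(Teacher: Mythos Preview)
Your sketch identifies the right obstacle and then stops at it: the ``linearization comparison'' between $F'(x_1)$ and $\nabla F_\eta(x_1)$ is the entire content of the lemma, and you never carry it out. Concretely, chaining $F(x_0)\le F_\eta(x_0)+\eta\cD_{v,Y}$, the Lipschitz-gradient bound for $F_\eta$, and $F_\eta(x_1)\le F(x_1)$ gives
\[
F(x_0)-F(x_1)-\langle F'(x_1),x_0-x_1\rangle
\;\le\; \eta\cD_{v,Y}+\frac{\|A\|^2}{2\eta\sigma_v}\|x_0-x_1\|^2
+\langle \nabla F_\eta(x_1)-F'(x_1),\,x_0-x_1\rangle,
\]
and the extra cross term is the one you cannot drop. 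The fix is direct once you recall that every $F'(x_1)\in\partial F(x_1)$ is of the form $A^*y_1$ for some maximizer $y_1$ of $\langle Ax_1,\cdot\rangle-\hat g(\cdot)$ on $Y$, while $\nabla F_\eta(x_1)=A^*y_\eta$ with $y_\eta$ the maximizer of the smoothed problem. Since the smoothed objective is strongly concave with modulus $\eta\sigma_v$ and $y_1$ is optimal for the unsmoothed one, one gets $\tfrac{\eta\sigma_v}{2}\|y_1-y_\eta\|^2\le \eta\,[V(y_1)-V(y_\eta)]\le \eta\cD_{v,Y}$, i.e.\ $\|y_1-y_\eta\|\le\sqrt{2\cD_{v,Y}/\sigma_v}$ (this is just the diameter bound for $Y$). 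Hence the cross term is at most $\|A\|\sqrt{2\cD_{v,Y}/\sigma_v}\,\|x_0-x_1\|$, uniformly in $\eta$. Minimizing the remaining two $\eta$-terms then gives another $\sqrt{2\|A\|^2\cD_{v,Y}/\sigma_v}\,\|x_0-x_1\|$, and the sum matches \eqnok{Lips_saddle}. Note also that your algebra at the very end is off by a factor of $2$: $2\sqrt{\|A\|^2\cD_{v,Y}/(2\sigma_v)}=\sqrt{2\|A\|^2\cD_{v,Y}/\sigma_v}$, not $2\sqrt{2\|A\|^2\cD_{v,Y}/\sigma_v}$; the correct constant only appears once the cross term is accounted for.

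For comparison, the paper proceeds quite differently. It first bounds the left-hand side by $\langle F'(x_0)-F'(x_1),x_0-x_1\rangle$ via convexity, and then controls each $\langle F'(x_i),p\rangle$ by an auxiliary lemma: for any direction $p$ there is a point $z\in Z_{x}$ (a sphere around $x$ of radius $\sqrt{2[\eta\cD_{v,Y}+F_\eta(x)-F(x)]/\cL_\eta}$) with $\langle F'(x),p\rangle\le\langle \nabla F_\eta(x)+\cL_\eta(z-x),p\rangle$. This yields an estimate with an additive $\|A\|^2\|x_0-x_1\|^2/(\sigma_v\eta)$ term that vanishes as $\eta\to\infty$, and the paper simply lets $\eta\to\infty$ rather than optimizing over it. Your smoothing-and-optimizing route is arguably more transparent, but it is not complete until you handle the cross term as above.
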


We are now ready to show the main convergence results for the USL method.

\begin{theorem} \label{theorem_saddle}
Suppose that $\alpha_k \in (0,1]$, $k = 1,2, \ldots$, in procedure $\USLGAP$
are chosen such that condition~\eqnok{APL_policy} holds with $\rho = 1$ for some $c > 0$.
The following statements hold for the USL method applied to 
problem \eqnok{cp}-\eqnok{def_saddle}:
\begin{itemize}
\item [a)] the number of non-significant phases is bounded by
$\tilde{\cal S}_F(Q_1) := \left \lceil \log {\cal D}_v/Q_1
\right \rceil
$, and the number of significant phases is bounded by
${\cal S}_F(\epsilon) \equiv {\cal S}(4 \bar \Delta_F, \epsilon, q)$, where
${\cal S}(\cdot,\cdot,\cdot)$ is defined in
\eqnok{bnd_phase_comp} and
\beq \label{ini_gap_saddle}
\bar \Delta_F := \|A\| \sqrt{ \frac{ 
{\cal D}_{\w,X} {\cal D}_{v,Y}}{\sigma_\w \sigma_v}}.
\eeq
\item [b)] the total number of gap reduction iterations
performed by the USL method does not exceed 
\beq \label{iter_complexity1_u}
{\cal S}_F(\epsilon) + \tilde{\cal S}_F(Q_1) +
\frac{\tilde{c} \bar \Delta_F}{\epsilon},
\eeq 
where $\tilde{c} := 2 [\sqrt{2}/(1-q) + \sqrt{2}+1] \sqrt{c} / \beta \theta $.
\end{itemize}
\end{theorem}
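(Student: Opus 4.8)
The plan is to mirror the structure of the proof of Theorem~\ref{APL_theorem}, but with the extra bookkeeping needed because the phases of the USL method now come in two flavours (significant and non-significant) and because the ``effective smoothness constant'' seen inside procedure~$\USLGAP$ is $\mathcal{L}_\eta = \|A\|^2/(\eta\sigma_v)$ with $\eta$ itself depending on the current gap and on the current estimate $Q_s$. First I would dispose of the non-significant phases: by Lemma~\ref{simple_USL}.b, a non-significant phase can occur only when $Q_s < \mathcal{D}_{v,Y}$, and in that case $Q_{s+1}=2Q_s$; since $Q_1>0$ and $Q_s$ can only double in non-significant phases, the number of such phases is at most $\lceil \log \mathcal{D}_{v,Y}/Q_1\rceil = \tilde{\mathcal{S}}_F(Q_1)$, which gives the first half of part a). For the significant phases, each one shrinks the gap $\delta_s = \ub_s-\lb_s$ by the factor $q$ (Lemma~\ref{simple_USL}.a), and the initial gap $\delta_1 = f(p_1) - h_0(p_0,p_1)$ is bounded using Lemma~\ref{tech_result} together with $\|p_1-p_0\|\le \sqrt{\Omega_{\w,X}}$ and the identity $\Omega_{\w,X}=2\mathcal{D}_{\w,X}/\sigma_\w$; this yields $\delta_1 \le 4\|A\|\sqrt{\mathcal{D}_{\w,X}\mathcal{D}_{v,Y}/(\sigma_\w\sigma_v)} = 4\bar\Delta_F$. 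The number of significant phases needed to drive the gap below $\epsilon$ is therefore at most $\mathcal{S}(4\bar\Delta_F,\epsilon,q) = \mathcal{S}_F(\epsilon)$, completing part a).

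For part b) I would count gap-reduction iterations by summing $K_{USL}(\delta_s,Q_s)$ from Theorem~\ref{USL_sum} over all phases. The key point is that along the significant phases the gaps $\delta_s$ decrease geometrically from at most $4\bar\Delta_F$ down to something just above $\epsilon$, exactly as in the proof of Theorem~\ref{APL_theorem}; and along every phase (significant or not) we have the uniform bound $Q_s \le 2\mathcal{D}_{v,Y}$ established in the remarks preceding the theorem. Substituting $\tilde D \le 2\mathcal{D}_{v,Y}$ into \eqnok{USL_GAP_BND} gives, for each phase,
\[
K_{USL}(\delta_s,Q_s) \le 1 + \frac{2\|A\|}{\beta\theta\,\delta_s}\sqrt{\frac{2c\,\mathcal{D}_{\w,X}\mathcal{D}_{v,Y}}{\sigma_\w\sigma_v}} = 1 + \frac{2\sqrt{2c}}{\beta\theta}\cdot\frac{\bar\Delta_F}{\delta_s}.
\]
The ``$+1$'' contributions sum to at most $\mathcal{S}_F(\epsilon)+\tilde{\mathcal{S}}_F(Q_1)$, the total number of phases. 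For the remaining terms, on non-significant phases $\delta_s$ does not change but the phase still terminates in step~3b, so there are at most $\tilde{\mathcal{S}}_F(Q_1)$ of them, each contributing at most $(2\sqrt{2c}/\beta\theta)(\bar\Delta_F/\epsilon)$ since $\delta_s>\epsilon$ throughout; on significant phases the $\delta_s$ form (a subsequence of) a geometric-like sequence bounded below by $\epsilon q^{\bar s-s}$ exactly as in Theorem~\ref{APL_theorem}, so $\sum_{s}\delta_s^{-1} \le 1/[(1-q)\epsilon]$. Collecting these, the geometric part is bounded by $\frac{2\sqrt{2c}}{\beta\theta}\cdot\frac{\bar\Delta_F}{(1-q)\epsilon}$, and adding the non-significant contribution and absorbing constants gives the bound $\tilde c\,\bar\Delta_F/\epsilon$ with $\tilde c = 2[\sqrt2/(1-q)+\sqrt2+1]\sqrt c/(\beta\theta)$; the $+\mathcal{S}_F(\epsilon)+\tilde{\mathcal{S}}_F(Q_1)$ from the ``$+1$'' terms gives \eqnok{iter_complexity1_u}.

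The main obstacle, and the place where care is needed, is the accounting for non-significant phases in part b): on such a phase $\delta_s$ is unchanged, so one cannot fold it into the geometric telescoping used for significant phases, and one must instead bound the number of non-significant phases independently (via $Q_s$ doubling) and charge each one the worst-case cost $O(\bar\Delta_F/\epsilon)$. Making the constants come out exactly as $\tilde c = 2[\sqrt2/(1-q)+\sqrt2+1]\sqrt c/(\beta\theta)$ requires tracking the $\sqrt2$ coming from $Q_s\le 2\mathcal{D}_{v,Y}$ separately through the significant-phase sum (giving $\sqrt2/(1-q)$), the non-significant-phase sum (giving an additional $\sqrt2$), and a $+1$ to cover the phase that first brings the gap to $\epsilon$; I would verify the arithmetic at the end but expect no conceptual difficulty beyond this split.
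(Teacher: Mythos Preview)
Your plan for part a) and for the significant-phase contribution in part b) matches the paper's proof. The gap is in your accounting for the non-significant phases in part b).

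You propose to bound each non-significant phase by the worst-case cost, using the uniform estimate $Q_s\le 2\mathcal{D}_{v,Y}$ and $\delta_s>\epsilon$, and then multiply by the number $\tilde{\mathcal S}_F(Q_1)$ of such phases. That yields a total non-significant contribution of order
\[
\tilde{\mathcal S}_F(Q_1)\cdot \frac{2\sqrt{2c}}{\beta\theta}\cdot\frac{\bar\Delta_F}{\epsilon},
\]
so your final ``constant'' in front of $\bar\Delta_F/\epsilon$ would be $2\sqrt{c}\bigl[\sqrt{2}/(1-q)+\sqrt{2}\,\tilde{\mathcal S}_F(Q_1)\bigr]/(\beta\theta)$, which depends on $Q_1$ through $\tilde{\mathcal S}_F(Q_1)=\lceil\log(\mathcal D_{v,Y}/Q_1)\rceil$. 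This does \emph{not} recover the stated $Q_1$-independent constant $\tilde c$, and your attempted decomposition of $\sqrt{2}+1$ into ``$\sqrt{2}$ from the non-significant sum plus $1$ for the last phase'' cannot work for this reason.

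The paper avoids this by exploiting a second piece of structure on the non-significant phases that you discarded: the estimates $Q_{n_r}$ themselves form a geometric sequence, $Q_{n_r}=Q_1\cdot 2^{r-1}$, because $Q$ doubles precisely at each non-significant phase. Plugging the \emph{actual} $Q_{n_r}$ (rather than the uniform bound $2\mathcal D_{v,Y}$) into $K_{USL}$ gives
\[
\sum_{r=1}^{m} K_{USL}(\epsilon,Q_{n_r}) \le m+\frac{2\|A\|}{\beta\theta\,\epsilon}\sqrt{\frac{c\,\mathcal D_{\w,X}Q_1}{\sigma_\w\sigma_v}}\sum_{r=1}^{m}2^{(r-1)/2},
\]
and since $Q_1\cdot 2^{m-1}<\mathcal D_{v,Y}$ the geometric sum telescopes to at most $\sqrt{\mathcal D_{v,Y}/Q_1}\big/(\sqrt2-1)$, so the $Q_1$ cancels and one obtains the $Q_1$-independent factor $1/(\sqrt2-1)=\sqrt2+1$. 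That is where the $\sqrt2+1$ in $\tilde c$ actually comes from.
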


\begin{proof}
Denote $\delta_s \equiv \ub_s - \lb_s$, $s \ge 1$. 
Without loss of generality, we assume that $\delta_1 > \epsilon$, 
since otherwise the statements are obviously true.
The first claim in part a) immediately follows from the facts that
a non-significant phase can occur
only if $Q_1 \le {\cal D}_v$ due to Lemma~\ref{simple_USL}.b)
and that $Q_s$, $s\ge 1$, is increased by a factor of $2$
in each non-significant phase. 
In order to show the second claim in part a), we first bound the initial optimality
gap $\ub_1 - \lb_1$. By the convexity of $F$, \eqnok{def_psi_eta} and \eqnok{ini_lb_saddle}, 
we can easily see that $\lb_1 \le f^*$. Moreover, we conclude from 
\eqnok{def_psi_eta}, \eqnok{Lips_saddle} and \eqnok{ini_lb_saddle} that 
\beqas
\ub_1 - \lb_1 &\le& f(p_1) - \lb_1 = 
F(p_1) -  F(p_0) - \langle F'(p_0), p_1 - p_0 \rangle\\
&\le&  2 \left( \frac{2 \|A\|^2 
{\cal D}_{v,Y}}{\sigma_v}\right)^\frac{1}{2} \|p_1 - p_0\|
\le 4 \bar \Delta_F,
\eeqas
where the last inequality follows from \eqnok{diam_X}.
Using this observation and Lemma~\ref{simple_USL}.a), we
can easily see that the number of significant phases is bounded by ${\cal S}_F(\epsilon)$.

We now show that part b) holds.
Let $B=\{b_1, b_2, \ldots, b_k\}$ and $N = \{n_1, n_2, \ldots, n_m\}$, 
respectively, denote the set of indices of
the significant and non-significant phases. 
Note that 
$
\delta_{b_{t+1}} \le q \, \delta_{b_{t}} ,  \, t \ge 1,
$
and hence that $\delta_{b_{t}} \ge q^{t-k} \delta_{b_{k}}
> \epsilon q^{t-k}$, $1 \le t \le k$.
Also observe that $Q_{b_t} \le 2 {\cal D}_{v,Y}$ (see the remarks
right after the statement of the USL method).
Using these observations and Theorem~\ref{USL_sum}, we conclude that
the total number of iterations performed in the significant phases
is bounded by 
\beqa
\sum_{t=1}^k K_{USL}(\delta_{b_t}, Q_{b_t}) &\le& \sum_{t=1}^k K_{USL}(\epsilon q^{t-k}, 2 {\cal D}_{v,Y})
\le k + \frac{2 \|A\|}{\beta \theta \epsilon} 
\sqrt{\frac{2 \C_1 {\cal D}_{\w,X} {\cal D}_{v,Y}}{\sigma_\w \sigma_v}} \sum_{t=1}^k q^{k-t} \nn \\
&\le& {\cal S}_F + \frac{2 \|A\|}{\beta \theta (1-q) \epsilon} 
\sqrt{\frac{2 \C_1 {\cal D}_{\w,X} {\cal D}_{v,Y}}{\sigma_\w \sigma_v}}, \label{ttt1}
\eeqa
where the last inequality follows from part a) and the observation
that $\sum_{t=1}^k q^{k-t} \le 1/ (1-q)$. Moreover,
note that $\Delta_{n_r} > \epsilon$ for any $1 \le r \le m$ and
that $Q_{n_{r+1}} = 2 Q_{n_r}$ for any $1 \le r \le m$. 
Using these observations and Theorem~\ref{USL_sum},
we conclude that the total number of iterations performed in 
the non-significant phases is bounded by
\beqa
\sum_{r =1}^m K_{USL}(\delta_{n_r}, Q_{n_r}) &\le& 
\sum_{r =1}^m K_{USL}(\epsilon, Q_{n_r}) 
\le  m+ \frac{2 \|A\|}{\beta \theta \, \epsilon} 
\sqrt{\frac{\C_1 {\cal D}_{\w,X} Q_1}{\sigma_\w \sigma_v}} \sum_{r=1}^m 2^\frac{r-1}{2}\nn \\
&\le& \tilde{\cal S}_F +  \frac{2 \|A\|}{\beta \theta \, \epsilon} 
\sqrt{\frac{\C_1 {\cal D}_{\w,X} Q_1}{\sigma_\w \sigma_v}} \sum_{r=1}^{\tilde{\cal S}_F} 2^\frac{r-1}{2}
\le \tilde{\cal S}_F + \frac{2 \|A\|}{(\sqrt{2}-1)\beta \theta \, \epsilon} 
\sqrt{\frac{\C_1 {\cal D}_{\w,X} {\cal D}_{v,Y}}{\sigma_\w \sigma_v}}. \label{ttt2}
\eeqa
Combining \eqnok{ttt1} and \eqnok{ttt2}, we obtain \eqnok{iter_complexity1_u}.
\end{proof}

\vgap

It is interesting to observe that, if $Q_1 ={\cal D}_{v,Y}$,
then there are no non-significant phases and the number of iterations performed by the USL method is simply
bounded optimally by \eqnok{ttt1}. In this case, we do not need to compute the value of $f_\eta(x^u_k)$
in step 3b. We refer to such a special case of the USL method as the basic 
smoothing level (BSL) method. It is interesting to note that, in view of Theorem~\ref{theorem_saddle},
the USL method still 
achieves the optimal complexity bound in \eqnok{iter_complexity1_u} even without a good 
initial estimate on ${\cal D}_{v,Y}$.

\setcounter{equation}{0}
\section{Numerical results} \label{sec-num}
Our objective in this section is to report some promising  
results obtained for the new BL type algorithms
developed in this paper. More specifically, 
we study in Sections \ref{sec-sdp} and \ref{sec-sp}, respectively, the application of these methods to
solve certain classes of semidefinite programming (SDP) and stochastic programming (SP)
problems. 

\subsection{A class of SDP problems} \label{sec-sdp}
In this subsection, we consider the classic SDP problem of 
\beq \label{prob_eig}
\min_{x \in X} \lambda_1\left( {\cal A}(x) \right),
\eeq
where $X \subseteq \bbr^n$ is a convex and compact set,
$\lambda_1: \bbr^{m\times m} \to \bbr$ denotes the
maximal eigenvalue of a symmetric matrix, 
\[
{\cal A}(x) = A_0 + \sum_{i=1}^n x_i A_i,
\]
and $A_i$, $i = 1, \ldots, n$, are given $m \times m$ symmetric 
matrices.  

One can solve problem \eqnok{prob_eig} by using interior-point methods.
However, due to the high iteration cost of interior-point methods, much 
effort has recently been directed to the development
of first-order methods for solving problem \eqnok{prob_eig}. 
Since problem \eqnok{prob_eig} is in general nonsmooth, 
one can use general nonsmooth convex optimization methods,
such as NERML (non-Euclidean restricted memory level) in \cite{BenNem00,BenNem05-1} or APL
in Section~\ref{sec-APL}. In particular, Let a symmetric matrix $A \in \bbr^{m \times m}$ be given.
It is well-known that the subdifferential of $\lambda_1$ at $A$ 
is given by
$
\partial \lambda_1(A) = \mbox{co}\left\{ 
u u^T: u^T u = 1, A u = \lambda_1(A) u \right\},
$
where $\mbox{co}(\cdot)$ denotes the convex hull. Hence, $\lambda_1$ is smooth (i.e.,
$\partial \lambda_1(A)$ is a singleton) if and only if the maximal 
eigenvalue of $A$ has multiplicity $1$. In comparison with the NERML algorithm, a nice feature of the APL
method is that it can automatically explore the local smoothness structures
of a particular problem instance, as
the objective function of \eqnok{prob_eig} may be differentiable along 
certain parts of the trajectory of the algorithm.
These methods, in the worst case,
require ${\cal O} ( 1/ \epsilon^2)$ iterations to find an $\epsilon$-solution
of problem~\eqnok{prob_eig}, and the major iteration costs of these methods consist of 
finding a maximal eigenvector $u_x$ of ${\cal A}(x)$ for a given $x \in X$ and
assembling the subgradient ${\cal A}^* u_x$, where ${\cal A}^*$ denotes the adjoint
operator of ${\cal A}$. 
It should be noted that other bundle type methods, such as
the spectral-bundle method by Helmberg and Rendl~\cite{HeRe00}, have
also been developed for solving problem \eqnok{prob_eig}. 
The spectral-bundle method is obtained by 
tailoring the well-known bundle method \cite{Kiw83-1,Kiw90-1,Lem75}
to problem \eqnok{prob_eig}. By making use of the specific structure of 
problem \eqnok{prob_eig}, each iteration of this method
requires the solution of a quadratic semidefinite programming
problem. It should also be noted that there are no complexity results available for the aforementioned 
spectral bundle method.  

Since problem \eqnok{prob_eig} can also be written as a bilinear saddle point problem:
\beq \label{prob_eig_sm}
\min_{x \in X} \left\{\lambda_1({\cal A}(x) =  
\max_{y \in Y} \langle  {\cal A}(x), y\rangle\right\},
\eeq
where $Y := \left\{y \in \bbr^{m \times m} | \mbox{Tr}(y) = 1, y \succeq 0\right\}$,
we can apply Nesterov's smoothing scheme (NEST-S)~\cite{Nest05-1,Nest06-1} and
the USL method developed in Section~\ref{sec-saddle} for
solving \eqnok{prob_eig_sm}. These methods can find an $\epsilon$-solution 
of \eqnok{prob_eig_sm} in at most ${\cal O}(1/\epsilon)$ iterations. 
It should be noted that the iteration costs of NEST-S and USL can slightly differ from each other. 
More specifically, USL applied to \eqnok{prob_eig_sm}
requires a full eigenvalue decomposition and computation of the adjoint operator ${\cal A}^*$ 
to define $h_\eta$ (see \eqnok{lower_saddle}) in step 1 of procedure~$\USLGAP$.
In addition, it requires to find
a maximum eigenvalue to compute $f(x_k^u)$ in step 3a 
of procedure~$\USLGAP$. On the other hand, each iteration of
NEST-S requires two (or one in some variants of Nesterov's method, see, e.g., \cite{Lan10-3}) full eigenvalue 
decompositions and computations of the adjoint operator ${\cal A}^*$. 

Our goal is to compare the four different algorithms,
namely: NERML, APL, USL and NEST-S, applied to solve problems in the form of \eqnok{prob_eig}
or \eqnok{prob_eig_sm}.
More details about the implementation of these algorithms
are as follows.
\begin{itemize}
\item {\sl Prox-functions.} 
If the feasible set $X$ is
a standard simplex given by $\left\{ x \in \bbr^n | \sum_{i=1}^n x_i = 1, x_i \ge 0, \forall i\right\}$,
the prox-function of $X$, as required by all these four algorithms,
is set to $\w(x) = \sum_{i=1}^n x_i \log x_i$ and the norm is set to $\|x\|_1$.
If $X$ is a box, then we set $\w(x) = \|x\|_2^2/2$ and the norm is set to $\|x\|_2$.
The prox-function of $Y$, as required by the algorithms  USL and NEST-S,
is set to $v(y) = \sum_{i=1}^n \lambda_i(y) \log \lambda_i(y)$, and
the norm is set to $\sum_{i=1}^n \lambda_i(y)$, where $\lambda_i(y)$, $i = 1, \ldots, n$,
denote the eigenvalues of $y \in Y$. Under this setting, the value of ${\cal D}_{v,Y}$
can be bounded by $\ln m$. Hence, we can set $Q_1 = \ln m$ in our implementation of the USL method.
\item {\sl Localizers.} For the APL, USL and NERML algorithms, we define the localizer $X'_k$ as
\[
X'_{k} = \left\{x \in X: \langle  
\nabla \w(x^l_k), x - x_{k}\rangle \ge 0 \right\} \bigcap {\cal M}_k, \ \
k \ge 1,
\]
where ${\cal M}_k$ denotes the intersection of totally at most $B$ half spaces of 
the form $\{x: h(x^l_k,x) \le l \}$ which have been generated most recently. 
Note that the larger the value of $B$
is, the more difficult the subproblems of BL type methods (e.g., \eqnok{def_lb} and \eqnok{prox_step}) are.
On the other hand, a larger $B$ might help to compute a better lower bound in~\eqnok{def_lb}.
It is found from our initial experiments that different values of $B$ within $[10, 30]$ perform
almost equally well. 
\item {\sl Subproblems for BL type methods.}
If the problem dimension $n$ is relatively small, say $n \le 5000$, we can solve 
the subproblems for BL type methods (e.g., \eqnok{def_lb} and \eqnok{prox_step}) by Mosek~\cite{Mosek}.
In this case we set the bundle limit $B$ to $30$. 
If the size of $n$ is very big, it will be time-consuming to directly solve the subproblems of BL type methods.
However, observing that the number of constraints in these subproblems
is very small (at most $B+1$), one can conveniently solve the Lagrangian duals of these subproblems
(see Ben-tal and Nemirovski~\cite{BenNem05-1}). In particular, if $n$ is big, say $n \ge 5000$, we set $B =10$
and solve the Lagrangian dual of these subproblems by using the ABL method, which can solve efficiently
small dimensional CP problems, similarly to the BL method~\cite{LNN,BenNem05-1}.
\item  {\sl Fine-tuning for NEST-S.} For the NEST-S scheme, we compute the Lipschitz constant 
${\cal L}_\eta$ by \eqnok{new_ls}, where the smoothing parameter
$\eta$ is set to 
\[
\frac{2 \|\cal A\|}{N+1} \frac{D_{\w,X}}{\sigma_\w \sigma_v D_{v,Y}}.
\] 
and the operator norm $\|\cal A\|$ is computed according to \cite{Nest06-1}. 
Note however, that the resulting estimate of ${\cal L}_\eta$ can be rather
conservative, which leads to the slow convergence of the NEST-S scheme. 
We had also implemented a variant of Nesterov's method
which can adaptively search for the Lipschitz constant ${\cal L}_\eta$ (\cite{Nest07-1}).
However, our preliminary experiments indicate that the improvement from
this approach is not significant. In our final experiments, we run 
NEST-S four times, and each time we multiply the 
Lipschitz constant ${\cal L}_\eta$ estimated above by a different factor: $10^{-1}$,  
$10^{-2}$,  $10^{-3}$, or $10^{-4}$. We then report the best solutions,
in terms of the objective value, obtained from these four runs of 
the NEST-S scheme.
\item {\sl Others.} We set $\beta = \theta=1/2$, and specify 
$\{\alpha_k\}$ according to Proposition~\ref{APL-step}.a) in
the APL and USL methods. All the codes are implemented in MATLAB2007 under Windows Vista and 
the experiments were conducted on an INTEL 2.53 GHz labtop.
\end{itemize}

Our first experiments were conducted on a set of randomly generated SDP
instances, each of which has  
various sizes of $A_i$'s for $i=1, \ldots,n$.  
We also assume that the feasible set $X$ is given by
a standard simplex. More details about 
these instances are shown in Table~\ref{table-eig}, where
$n$ is the dimension of $x$, $\ub_1$ is the objective value
at $p_1 = (1/n, \ldots, 1/n)$, and $\Delta_1 = \ub_1 - \lb_1$ 
denotes the initial gap with $\lb_1$ given by \eqnok{ini_lb_saddle}. 
 We run $200$ iterations for the four algorithms mentioned above
and  report the objective values obtained at the $100$th and $200$th iteration
in column $2$ and column $4$, respectively. For the APL, USL and NERML
algorithms, we also report the optimality gap $\Delta_s$ at the $100$th and
$200$th iteration, respectively, in column $3$ and $5$ of Table \ref{table-eig}.
The CPU time (in seconds) for running these algorithms is reported
in column $6$ of Table \ref{table-eig}. It should be noted that
we only report the CPU time for one run of the NEST-S algorithm, although
we had run it for $4$ times to find a good estimate of ${\cal L}_\eta$.

\begin{table}[h] 
\caption{Experiments with random SDP problems}
\centering \label{table-eig}
\begin{tabular}{|l|l l |l l|l|}
\hline
\multicolumn{6}{|c|}{E1: $n=1,000$, $m=400$, $d=2\%$, $\mbox{ub}_1=6.329960$, $\Delta_1=5.08e-1$}\\
\hline
\multicolumn{1}{|l|}{alg.} 
& \multicolumn{1}{|l}{$\mbox{ub}_{100}$} 
& \multicolumn{1}{l}{$\Delta_{100}$} 
& \multicolumn{1}{|l}{$\mbox{ub}_{200}$} 
& \multicolumn{1}{l|}{$\Delta_{200}$} 
& \multicolumn{1}{l|}{Time}\\
\hline
USL &$6.026060$ & $8.48e-5$ & $6.026045$& $1.28e-6$ &$174.10$\\
NEST-S& $6.077040$ &-& $6.076351$ & -& $190.51$\\
APL & $6.026048$ &$4.23e-5$ & $6.026045$ &$1.22e-6$ &$109.59$\\
NERML & $6.026323$ & $7.97e-4$ &$6.026084$ & $2.39e-4$ & $100.37$ \\
\hline
\hline
\multicolumn{6}{|c|}{E2: $n=1,000$, $m=600$,  $d=2\%$, $\mbox{ub}_1=7.788735$, $\Delta_1=6.45e-1$}\\
\hline
\multicolumn{1}{|l|}{alg.} 
& \multicolumn{1}{|l}{$\mbox{ub}_{100}$} 
& \multicolumn{1}{l}{$\Delta_{100}$} 
& \multicolumn{1}{|l}{$\mbox{ub}_{200}$} 
& \multicolumn{1}{l|}{$\Delta_{200}$} 
& \multicolumn{1}{l|}{Time}\\
\hline
USL &$7.458582$ & $3.39e-4$ & $7.458538$& $5.07e-6$ &$364.93$\\
NEST-S& $7.539811$ &-& $7.538583$ & -& $552.12$\\
APL & $7.458561$ &$8.96e-5$ & $7.458537$ & $1.96e-6$ & $166.27$\\
NERML& $7.458801$ &$1.07e-3$ & $7.458557$ &$8.35e-5$ & $142.16$\\
\hline
\hline
\multicolumn{6}{|c|}{E3: $n=1,000$, $m=800$, $d=2\%$, $\mbox{ub}_1=8.855385$, $\Delta_1=5.57e-1$}\\
\hline
\multicolumn{1}{|l|}{alg.} 
& \multicolumn{1}{|l}{$\mbox{ub}_{100}$} 
& \multicolumn{1}{l}{$\Delta_{100}$} 
& \multicolumn{1}{|l}{$\mbox{ub}_{200}$} 
& \multicolumn{1}{l|}{$\Delta_{200}$} 
& \multicolumn{1}{l|}{Time}\\
\hline
USL &$8.555496$ & $1.35e-4$ & $8.555475$& $4.08e-6$ &$799.30$\\
NEST-S & $8.635632$ &-& $8.635473$ & -& $1347.26$\\
APL &$8.555484$ & $6.61e-5$ & $8.555475$ & $2.05e-6$ & $275.22$\\
NERML & $8.555743$ &$7.45e-4$ & $8.555494$ & $1.24e-4$ & $213.25$ \\ 
\hline
\end{tabular}
\end{table}

We can draw a few conclusions from our experiments with these 
random SDP instances. Firstly, among the two methods
with ${\cal O}(1/\epsilon)$ convergence, USL 
can significantly outperform NEST-S: the former algorithm can
reach $6$ accuracy digits after $200$ iterations while the latter
algorithm reaches at most $2$ accuracy digits for these instances. 
Secondly, for the two nonsmooth methods, APL consistently
outperforms NERML in solution quality while
the computational time is comparable to the latter one.
Thirdly, while the solution quality of the USL method is significantly
better than the one of the NERML algorithm, it is interesting to notice
that the solution quality of the APL algorithm is comparable or better than
that of the USL. One plausible explanation is that
the problems to be solved, due to the inherent randomness, 
are smooth along most part of the trajectory of the APL algorithm.

\vgap

Our second experiments were carried out for a class of
more structured SDP instances, namely a class of Lovasz capacity problems. 
Let $(N,E)$ denote a graph with $m$ nodes in $N$ and $n$ edges in $E$.
The Lovasz capacity $\lov$ of $(N,E)$ is defined by
\beq \label{def_lov}
\lov := \min_{x \in \cX} \left\{\Phi(x) := \lambda_1\left(d + x\right)\right\}.
\eeq 
Here $\cX := \left\{x \in {\cal S}^m: x_{ij} = 0 \mbox{ if } (i,j) \notin E \right\}$,
${\cal S}^m$ denotes the set of symmetric matrices in $\bbr^{m \times m}$ and
$d$ is a $m \times m$ constant matrix given by
\[
d_{ij} := 
\left\{
\begin{array}{ll}
0, (i,j) \in E, \\
1, (i,j) \notin E.
\end{array}
\right.
\]
Note that for an optimal $x$ of problem \eqnok{def_lov} the matrix $\lov I - (d + x)$
is positive semidefinite, so that nonzero entries in $x$
satisfy $|x_{ij}| \le \lov -1$. It follows that if
$v$ is a valid upper bound on $\lov$, then
problem \eqnok{def_lov} is equivalent to
\beq \label{def_lov1}
\lov = \min_{x \in \cX_v}  \Phi(x),
\eeq
where $\cX_v := \{ x \in \cX: |x_{ij}| \le v -1 \}$. In view of this observation,
we incorporate following enhancement into
the aforementioned first-order methods (NERML, APL, USL and NEST-S) applied to \eqnok{def_lov1}:
for all these methods, we update the upper bound $v$ in defining 
the feasible set $\cX_v$ from time to time. 
In particular, we update the value of $v$
in each phase of BL type methods. For NEST-S, we update
the value of $v$ whenever a new upper bound on $\lov$ becomes available 
(as noted by \cite{tseng08-1}, the optimal convergence of Nesterov's method
will be guaranteed with such a domain shrinking strategy). 

We generate a set of random graph instances as follows.
For a given number of nodes $m$ and a designed number of edges $\bar n$,
we first generate $m-1$ edges, each one connecting
a new node with a randomly selected existing node.
After that, we create $\bar n - m +1$ random edges and remove
those redundant edges. Thus, the actual number of edges $n$ can be smaller
than the designed number $\bar n$. Totally $6$ instances have been generated in this manner and 
the number of edges $n$ (and hence the number of decision variables) ranges from $11,503$ 
to $77,213$ (see Table~\ref{table-graph}). We also report the initial objective values
of these instances at $x_0=0$ in column $4$ of Table~\ref{table-graph}.
In order to compare the aforementioned algorithms for computing Lovasz capacity, 
we first run the NERML algorithm for $1,000$ iterations and record the quality of the output solutions
in terms of the generated upper bound. We then terminate
the remaining three algorithms, namely APL, USL and NEST-S,
whenever similar solution quality is achieved or the $1,000$
iteration limit is reached. We report the number of iterations, the computed upper bound
and CPU time in columns $2-4$, $5-7$, $8-9$ and $10-12$, respectively, for NERML, APL, USL and NEST-S
in Table~\ref{table-Lovasz}. From these results, we can safely draw the following conclusions.
Firstly, all the BL type methods significantly outperform NEST-S for these Lovasz capacity instances.
Secondly, while APL significantly outperforms NERML for the bigger instances G52, G61 and G62,
the USL method, which combines the advantages of both APL and NEST-S,
can significantly outperform NEST-S, NERML and APL for all these Lovasz capacity instances.
\begin{table}[h] 
\caption{Lovasz capacity instances}
\centering \label{table-graph}
\begin{tabular}{|l|l l l|}
\hline
\multicolumn{1}{|l|}{Inst.} 
& \multicolumn{1}{|l}{$m$} 
& \multicolumn{1}{l}{$n$}  
& \multicolumn{1}{l|}{$\Phi(0)$}\\
\hline
G41 & $400$ & $11,503$  &$342.62$   \\
G42 & $400$ & $21,692$ &$291.78$  \\
G51 & $500$ & $23,076$ &$407.88$   \\
G52 & $500$ & $47,910$  &$308.72$  \\
G61 & $600$ & $51,429$  &$428.87$  \\
G62 & $600$ & $77,213$  & $343.10$  \\
\hline
\end{tabular}
\end{table}


\begin{table}[h] 
\caption{Comparison of first-order methods for Lovasz capacity instances}
\centering \label{table-Lovasz}
\begin{tabular}{|l|l l l | l l l | l l l |l l l |}
\hline
\multicolumn{1}{|l|}{}
& \multicolumn{3}{c|}{NERML}
& \multicolumn{3}{c|}{APL}
& \multicolumn{3}{c|}{USL}
& \multicolumn{3}{c|}{NEST-S}\\
\multicolumn{1}{|c|}{Inst.}&
\multicolumn{1}{c}{Iter.} & \multicolumn{1}{c}{$\ub$}&
\multicolumn{1}{c|}{Time} &
\multicolumn{1}{c}{Iter.} & \multicolumn{1}{c}{$\ub$}&
\multicolumn{1}{c|}{Time} &
\multicolumn{1}{c}{Iter.} & \multicolumn{1}{c}{$\ub$}&
\multicolumn{1}{c|}{Time} &
\multicolumn{1}{c}{Iter.} & \multicolumn{1}{c}{$\ub$}&
\multicolumn{1}{c|}{Time}  \\
\hline
G41 & 1,000 & 63.88 & 829.58 & 800 & 63.61& 890.18 & 40& 62.79 & 44.08 &1,000 &342.31&980.29\\
G42 & 1,000 & 41.26 & 963.82 & 1,000&41.33 & 1166.17 & 30& 41.19 & 37.89 &1,000& 291.43&1078.53\\
G51 & 1,000 & 62.82 & 1292.38 & 1,000&63.21 & 1697.79 &30& 61.13 & 54.56&1,000&407.68 & 1964.73\\
G52 & 1,000 & 42.19 & 1724.29 & 70 & 41.50 &166.54 & 20 & 38.72& 45.30&1,000& 308.46&2403.18\\
G61 & 1,000 & 68.53 & 2343.86 & 20& 66.76 & 60.46&10& 59.30 & 32.67&1,000 &428.72& 3948.19\\
G62 & 1,000 & 40.43 & 4062.83 & 110&40.36 &455.54 &20& 39.47 & 96.89&1,000&342.91 &4309.73\\
\hline
\end{tabular}
\end{table}


\subsection{A class of two-stage stochastic programming problems} \label{sec-sp}
In this subsection, we consider the classic two-stage stochastic linear programming
given by
\beq \label{prob-sp}
\min_{x \in X} \left\{f(x) = c^T x + \bbe[V(x,\xi)] \right\},
\eeq
with
\beq \label{def_v}
V(x,\xi) = \min \left\{q^T \pi: W \pi = h + T x, \pi \ge 0 \right\}.
\eeq
Here, $x \in \bbr^{n_1}$ and $\pi \in \bbr^{n_2}$, respectively, are the
first and second-stage decision variables, $X \subseteq \bbr^{n_1}$
is a nonempty convex compact set, and ${\mathbb \xi \equiv (q, h, T)}$ is a random vector
with a known distribution supported on $\Xi \subseteq \bbr^{n_2 + m_2 + m_2\times n_1}$.
We assume that problem \eqnok{def_v} is feasible for every possible realization of $\xi$, i.e.,
problem \eqnok{prob-sp} has a complete recourse.
Moreover, for the purpose of illustrating the effectiveness of the algorithms developed in
this paper, we assume that $\xi$ is a discrete random vector and the number of
possible realizations of $\xi$ (or the sample space) is not too big.

It should be noted that if $\xi$ is a continuous random vector or the number of
possible realizations of $\xi$ is astronomically large, to solve problem \eqnok{prob-sp}
is highly challenging, due to the fundamental difficulty of computing the expectation 
to a high accuracy when the dimension of $\xi$ is high, see \cite{NJLS09-1,lns11} for 
a discussion on some recent advancements in this area. However, if the number 
of possible realizations of $\xi$ is not astronomically large, it is possible 
to solve problem \eqnok{prob-sp} to high accuracy 
in a reasonable amount of time
by using more powerful algorithms. This is indeed what we intend to demonstrate 
in this subsection. 
 
Since problem \eqnok{prob-sp} is nonsmooth in general, 
one can apply the NERML or APL methods.
These methods, in the worst case,
require ${\cal O} ( 1/ \epsilon^2)$ iterations to find an $\epsilon$-solution
of problem~\eqnok{prob-sp}. 
Recently, Ahmed~\cite{Ahmed06-1} noted that one can improve the complexity bound for solving
\eqnok{prob-sp} to ${\cal O}(1/\epsilon)$, by applying Nesterov's smoothing scheme to \eqnok{prob-sp}.
The basic idea is as follows. Let ${\cal Y}(q) := \{W^T y \le q\}$ and ${\cal B}_{m_2}$ be
the Euclidean ball in $\bbr^{m_2}$.
Note that by strong duality, we have
\beq \label{def_dual_v}
V(x, \xi) = \max \left\{ (h + Tx)^T y: y \in {\cal Y}(q) \right\}.
\eeq
Moreover, by Hoffman's Lemma~\cite{Hoffman52}, there exists a constant ${\cal R}_W > 0$ 
depending on $W$ such that
\[
{\cal Y}(q) \subseteq {\cal Y}(0) + {\cal R}_W \|q\| {\cal B}_{m_2}
= {\cal R}_W \|q\| {\cal B}_{m_2},
\]
where the last identity follows from the fact that ${\cal Y}(0) = \{0\}$
due to the complete recourse assumption. In other words, the
feasible region of \eqnok{def_dual_v} is bounded.
We can then uniformly approximate $f(x)$ in \eqnok{prob-sp} by
$
f_\eta(x) := c^T x + \bbe[V_\eta(x,\xi)]
$
for some $\eta > 0$, where
\beq \label{smoothed_sp}
V_\eta(x, \xi) = \max \left\{ (h + Tx)^T y - \eta \|y\|^2/2: y \in {\cal Y}(q)\right\}.
\eeq
However, the implementation of Nesterov's smoothing scheme is difficult,
since it is necessary to fine-tune a large number of problem parameters, including 
${\cal R}_W$, $\|q\|$ and $\|T\|$, as well 
as ${\cal D}_{\w,X}$. Due to the lack of good estimations for these parameters, especially, ${\cal R}_W$, 
no computational results have been reported in \cite{Ahmed06-1}.


In our experiments, we have implemented three methods, namely: APL and NERML
and USL, applied to problem \eqnok{prob-sp}. All these methods do not
require the input of any problem parameters and the implementation details are
similar to those in Subsection~\ref{sec-sdp}. We conduct our experiments on a few SP instances 
which have been studied by a few authors, namely: a telecommunication design (SSN) problem 
of Sen, Doverspike, and Cosares~\cite{sdc94} and the motor freight
carrier routing problem (20-term) of Mak, Morton, and Wood~\cite{mak:99}.
The dimensions of these instances are shown in Table \ref{table-sp-inst}, please
see \cite{lsw} for more details about these instances. 

\begin{table}[h] 
\caption{Dimension of the SP instances}
\centering \label{table-sp-inst}
\begin{tabular}{|l|l l |l l|}
\hline
\multicolumn{1}{|l|}{} 
& \multicolumn{1}{|l}{$n_1$} 
& \multicolumn{1}{l}{$m_1$} 
& \multicolumn{1}{|l}{$n_2$} 
& \multicolumn{1}{l|}{$m_2$} \\
\hline
SSN & $89$ & $1$ & $796$ &  $175$\\
20-term & $63$ & $3$ & $764$ & $124$\\
\hline
\end{tabular}
\end{table}

It is worth noting that here we assume that the number of possible 
realizations are fixed ($N=50$ or $100$) and hence obtain
four different instances, namely: SSN(50), SSN(100), 20-term(50) and
20-term(100). Noting that the initial optimality gap for these instances
are rather high (in order of $10^3$ or $10^7$), we run each algorithm
for $400$ iterations and the results are reported in Table~\ref{table-sp}.
The structure of the table is similar to Table~\ref{table-eig}
(see Subsection~\ref{sec-sdp}). We also run NERML for $1,000$ iterations
first and then check whether APL and USL can achieve similar gap reduction.
The latter results are reported in Table~\ref{table-sp-comp}. 

We can make a few observations from the numerical results in Tables~\ref{table-sp}
and \ref{table-sp-comp}.
Firstly, the iteration cost of the USL method is larger than that of the APL method,
which, in turn, is larger than that of the NERML algorithm. In particular,
the major iteration cost of the NERML and APL algorithm consists of solving
$N$ and $2N$ second-stage LP problems respectively, while the one of the 
USL algorithm involves the solutions of $N$ smoothed quadratic programming problems  
(see \eqnok{smoothed_sp}).
Secondly, both the APL and USL methods can significantly outperform the
the NERML algorithm in terms of the solution quality. As we can see from 
Tables \ref{table-sp} and Table \ref{table-sp-comp}, the NERML algorithm makes little progresses after $200$ 
iterations for these SP instances. Thirdly, the solution quality of the APL method is worse than that of
the USL method for solving the first two instances: SSN(50) and SSN(100),
but it significantly outperforms the latter one for solving the
last two instances: 20-term(50) and 20-term(100). One possible reason
is that the sizes of ${\cal D}_{v,Y} (\approx {\cal R}_W^2 \|q\|^2)$ for 
the last two instances are significantly larger than those for the first 
two instances, see Table~\ref{table-est} for the estimates on ${\cal D}_{v,Y}$ 
reported by the USL method (with $Q_1 = 1$).

\begin{table}[h] 
\caption{Experiments with the SP instances}
\centering \label{table-sp}
\begin{tabular}{|l|l l |l l|l|}
\hline
\multicolumn{6}{|c|}{SSN(50): $\mbox{ub}_1=2.352586e+2$, $\Delta_1=3.923265e+3$ }\\
\hline
\multicolumn{1}{|l|}{alg.} 
& \multicolumn{1}{|l}{$\mbox{ub}_{200}$} 
& \multicolumn{1}{l}{$\Delta_{200}$} 
& \multicolumn{1}{|l}{$\mbox{ub}_{400}$} 
& \multicolumn{1}{l|}{$\Delta_{400}$} 
& \multicolumn{1}{l|}{Time}\\
\hline
APL &  $4.839075$ &$2.437395e-3$ & $4.838074$ & $5.053628e-7$&$366.30$ \\
USL &  $4.838125$ & $1.837968e-4$ & $4.838073$ & $6.599449e-7$ & $754.31$\\
NERML & $5.550903$ & $5.012402e+0$ & $5.086603$ & $1.303485e+0$ &$193.47$ \\
\hline
\hline
\multicolumn{6}{|c|}{SSN(100): $\mbox{ub}_1=2.407279e+2$, $\Delta_1=4.023982e+3$ }\\
\hline
\multicolumn{1}{|l|}{alg.} 
& \multicolumn{1}{|l}{$\mbox{ub}_{200}$} 
& \multicolumn{1}{l}{$\Delta_{200}$} 
& \multicolumn{1}{|l}{$\mbox{ub}_{400}$} 
& \multicolumn{1}{l|}{$\Delta_{400}$} 
& \multicolumn{1}{l|}{Time}\\
\hline
APL & $7.354770$ &$9.148243e-3$ & $7.352610$ & $4.198017e-6$ & $730.15$ \\
USL & $7.354090$ & $2.683424e-3$ & $7.352610$ & $6.804606e-7$ & $1471.62$\\
NERML &$8.323381$ & $4.771295e+0$ &$7.578802$ & $1.079491e+0$ & $383.27$ \\
\hline
\hline
\multicolumn{6}{|c|}{20-term(50): $\mbox{ub}_1=7.718543e+5$, $\Delta_1=1.804693e+7$ }\\
\hline
\multicolumn{1}{|l|}{alg.} 
& \multicolumn{1}{|l}{$\mbox{ub}_{200}$} 
& \multicolumn{1}{l}{$\Delta_{200}$}
& \multicolumn{1}{|l}{$\mbox{ub}_{400}$} 
& \multicolumn{1}{l|}{$\Delta_{400}$} 
& \multicolumn{1}{l|}{Time}\\
\hline
APL & $2.549453e+5$ &$1.229655e-3$ & $2.549453e+5$ &$2.405432e-7$ & $1056.82$ \\
USL & $2.551031e+5$ & $1.896133e+3$ & $2.549602e+5$ & $3.310795e+2$ & $1209.53$\\
NERML & $2.587140e+5$ & $1.473815e+4$ & $2.576649e+5$& $1.368899e+4$ & $301.03$ \\
\hline
\hline
\multicolumn{6}{|c|}{20-term(100): $\mbox{ub}_1=7.664067e+5$, $\Delta_1=1.801832e+7$ }\\
\hline
\multicolumn{1}{|l|}{alg.} 
& \multicolumn{1}{|l}{$\mbox{ub}_{200}$} 
& \multicolumn{1}{l}{$\Delta_{200}$} 
& \multicolumn{1}{|l}{$\mbox{ub}_{400}$} 
& \multicolumn{1}{l|}{$\Delta_{400}$} 
& \multicolumn{1}{l|}{Time}\\
\hline
APL & $2.532875e+5$ &$3.679608e-3$ & $2.532875e+5$ &$2.463930e-7$ & $1895.63$ \\
USL & $2.533441e+5$ &$5.119095e+2$ & $2.532923e+5$ & $7.614912e+1$ & $2517.26$\\
NERML &$2.581546e+5$ & $2.171689e+4$ & $2.540804e+5$ & $3.754735e+3$ & $602.60$ \\
\hline
\end{tabular}
\end{table}

\begin{table}[h] 
\caption{Comparison of level methods for the SP instances}
\centering \label{table-sp-comp}
\begin{tabular}{|l|l l l | l l l | l l l |}
\hline
\multicolumn{1}{|l|}{}
& \multicolumn{3}{c|}{NERML}
& \multicolumn{3}{c|}{APL}
& \multicolumn{3}{c|}{USL}\\
\multicolumn{1}{|c|}{Inst.}&
\multicolumn{1}{c}{Iter} & \multicolumn{1}{c}{gap}&
\multicolumn{1}{c|}{Time} &
\multicolumn{1}{c}{Iter} & \multicolumn{1}{c}{gap}&
\multicolumn{1}{c|}{Time} &
\multicolumn{1}{c}{Iter} & \multicolumn{1}{c}{gap}&
\multicolumn{1}{c|}{Time} \\
\hline
SSN(50) & 1,000 & 4.689592e-1& 497.45&90 & 4.379673e-1& 85.67 &90 &2.935656e-1  &177.16 \\
SSN(100) & 1,000 & 1.001421e+0 &1037.11 &60 & 9.142773e-1&114.92 &60 &8.150025e-1 &226.91\\
20-term(50) & 1,000 &1.058791e-1 &794.87 &140 & 4.959911e-2& 301.40&590 &4.277621e-2 &1632.86 \\
20-term(100) & 1,000 &3.754735e+3 &1437.31 &70 &1.730272e+3 &197.45 &110 &1.740399e+3 &638.86\\
\hline
\end{tabular}
\end{table}

\begin{table}[h] 
\caption{Estimates on ${\cal D}_{v,Y}$}
\centering \label{table-est}
\begin{tabular}{|l|l l |l l|}
\hline
\multicolumn{1}{|l|}{} 
& \multicolumn{1}{|l}{SSN(50)} 
& \multicolumn{1}{l}{SSN(100)} 
& \multicolumn{1}{|l}{20-term(50)} 
& \multicolumn{1}{l|}{20-term(100)} \\
\hline
$Q$ & $64$ & $128$ & $1.68e+7$ &  $1.68e+7$\\
\hline
\end{tabular}
\end{table}



\setcounter{equation}{0} 
\section{Concluding remarks} \label{c_remarks}
In this paper, we present new bundle-level type methods for
convex programming. In particular, we show that both the ABL and
APL methods are uniformly optimal for solving smooth, nonsmooth and
weakly smooth problems without requiring the input of
any smoothness information. We also demonstrate that, with little modification,
the APL method is optimal for solving a class of composite CP
problems. Based on the APL method, we develop a new smoothing technique,
namely the USL method, which can achieve the optimal
complexity for solving a class of saddle point problems
without requiring the input of any problem parameters.
We demonstrate the significant advantages of the APL and USL methods
over some existing first-order methods for solving
certain classes of semidefinite programing and stochastic programming 
problems. 

\vgap

{\bf Acknowledgement:}
The author is very grateful to the co-editor Professor Adrian Lewis,
the associate editor and two anonymous referees
for their very useful suggestions for improving the quality and exposition
of the paper. 

\setcounter{equation}{0}
\section{Appendix}

In this section, we provide the proof of Lemma~\ref{tech_result}.

Let $F$ and $F_\eta$ be defined in \eqnok{nonsmooth1} and \eqnok{sm-approx},
respectively. Also let us denote, for any $\eta > 0$ and $x \in X$,
\beq \label{def_psi}
\psi_x(z) := F_\eta(x) + \langle \nabla F_\eta(x), z - x \rangle
+ \frac{{\cal L}_\eta}{2} \|z - x\|^2 + \eta {\cal D}_v,
\eeq
where ${\cal D}_v$ and ${\cal L}_\eta$ are defined in \eqnok{def_cal_DX} and
\eqnok{new_ls}, respectively. Clearly, in view of \eqnok{smoothness} and
\eqnok{closeness}, $\psi_x$ is a majorant of both $F_\eta$ and $f$.
Also let us define
\beq \label{def_Zx}
Z_x := \left\{ z \in \bbr^n: \|z - x\|^2 = \frac{2}{{\cal L}_\eta}
\left[ \eta {\cal D}_v + F_\eta(x) - F(x) \right] \right\}. 
\eeq
Clearly, by the first relation in \eqnok{closeness}, we have
\beq \label{bnd_Zx}
\|z-x\|^2 \le \frac{2 \eta {\cal D}_v}{{\cal L}_\eta},  
\ \ \forall \, z \in Z_x.
\eeq
Moreover, we can easily check that, for any $z \in Z_x$, 
\beq \label{app_eqa}
\psi_x(z) + \langle \nabla \psi_x(z), x - z \rangle = F(x),
\eeq
where $\nabla \psi_x(z) = \nabla F_\eta(x) + {\cal L}_\eta (z-x)$.

\vgap

The following results provides the characterization of a subgradient
direction of $F$.

\begin{lemma} \label{app_tech1}
Let $x \in \bbr^n$ and $p \in \bbr^n$ be given.
Then, $\exists z \in Z_x$ such that
\[
\langle F'(x), p \rangle \le \langle \nabla \psi_x(z), p\rangle = 
\langle \nabla F_\eta(x) + {\cal L}_\eta (z-x), p \rangle.
\]
where $F'(x) \in \partial F(x)$.
\end{lemma}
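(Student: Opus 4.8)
The plan is to combine three ingredients already present in the excerpt: the fact (observed right after~\eqnok{def_psi}) that $\psi_x$ majorizes $F$, the convexity of $F$, and the identity~\eqnok{app_eqa}, which holds for every $z\in Z_x$. The point is simply to evaluate the chain ``majorization $\ge$ convexity inequality'' at one cleverly chosen element of the sphere $Z_x$, namely the point obtained by moving from $x$ in the direction $p$, and then to read off the claim using $\nabla\psi_x(z)=\nabla F_\eta(x)+{\cal L}_\eta(z-x)$.

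Concretely, I would first let $r\ge 0$ be the radius of $Z_x$, so $r^2=\frac{2}{{\cal L}_\eta}\bigl[\eta {\cal D}_v+F_\eta(x)-F(x)\bigr]$, which is nonnegative by~\eqnok{closeness}, so that $Z_x$ is the (nonempty) sphere of radius $r$ about $x$. If $p=0$ the asserted inequality is trivial, both sides vanishing, and any $z\in Z_x$ works. If $p\neq 0$ and $r>0$, I would take $z:=x+(r/\|p\|)\,p$, which lies in $Z_x$ since $\|z-x\|=r$. On one hand, since $\psi_x$ majorizes $F$ and $F$ is convex, $\psi_x(z)\ge F(z)\ge F(x)+\langle F'(x),z-x\rangle$. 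On the other hand, \eqnok{app_eqa} gives $\psi_x(z)=F(x)+\langle \nabla\psi_x(z),z-x\rangle$. Subtracting $F(x)$ and comparing yields $\langle \nabla\psi_x(z)-F'(x),\,z-x\rangle\ge 0$; since $z-x=(r/\|p\|)\,p$ with $r/\|p\|>0$, this gives $\langle \nabla\psi_x(z),p\rangle\ge\langle F'(x),p\rangle$, and recalling the formula for $\nabla\psi_x(z)$ completes this case.

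It then remains to dispose of the degenerate case $r=0$, where $Z_x=\{x\}$: here $\psi_x(x)=F_\eta(x)+\eta {\cal D}_v=F(x)$, so the differentiable majorant $\psi_x$ touches $F$ at $x$, whence every subgradient of $F$ at $x$ is also a subgradient of $\psi_x$ there, forcing $F'(x)=\nabla\psi_x(x)=\nabla F_\eta(x)$, so $z=x$ already turns the asserted inequality into an equality. I expect this boundary case $r=0$ — together with the preliminary remark that $r^2\ge 0$, which is what makes the normalization $z=x+(r/\|p\|)\,p$ legitimate — to be the only genuine subtlety; the rest is a one-line combination of the majorization property, convexity, and~\eqnok{app_eqa}.
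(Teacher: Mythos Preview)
Your proof is correct and follows essentially the same route as the paper: choose $z=x+(r/\|p\|)\,p\in Z_x$ (the paper writes this as $z_0=x+tp$ with $t=r/\|p\|$), then chain the convexity inequality $F(x)+\langle F'(x),z-x\rangle\le F(z)$ with the majorization $F(z)\le\psi_x(z)$ and the identity~\eqnok{app_eqa}, and cancel the positive scalar $r/\|p\|$. Your treatment is in fact slightly more careful than the paper's, which neither separates out the trivial case $p=0$ nor the degenerate case $r=0$.
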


\begin{proof}
Let us denote
\[
t = \frac{1}{\|p\|}\left\{
\frac{2}{{\cal L}_\eta}
\left[ \eta {\cal D}_v + F_\eta(x) - F(x) \right]
\right\}^\frac{1}{2}
\]
and $z_0 = x + t p$. Clearly, in view of \eqnok{def_Zx},
we have $z_0 \in Z_x$. By convexity of $F$ and \eqnok{app_eqa},
we have
\beqas
F(x) + \langle F'(x), tp \rangle \le F(x + tp) &=& \psi_x(z_0)
= F(x) + \langle \nabla \psi_x(z_0), z_0 - x\rangle \\
&=& F(x) + t \langle \nabla \psi_x(z_0), p\rangle,
\eeqas
which clearly implies the result.
\end{proof}

\vgap

We are now ready to prove Lemma~\ref{tech_result}.

\noindent{\bf Proof of Lemma~\ref{tech_result}}.
First note that by the convexity of $F$, we have
\[
F(x_0) - \left [F(x_1) + \langle F'(x_1), x_0 - x_1\right] \rangle 
\le \langle F'(x_0), x_0 - x_1 \rangle  + \langle F'(x_1), x_1 - x_0 \rangle.
\]
Moreover, by Lemma~\ref{app_tech1}, $\exists z_0 \in Z_{x_0}$ and $z_1 \in Z_{x_1}$ s.t.
\beqas
\lefteqn{\langle F'(x_0), x_0 - x_1 \rangle  + \langle F'(x_1), x_1 - x_0 \rangle }\\
& \le & \langle \nabla F_\eta(x_0) - \nabla F_\eta(x_1),
x_0 - x_1 \rangle + {\cal L}_\eta \langle z_0 - x_0 - (z_1 - x_1), x_0 - x_1\rangle\\
&\le& {\cal L}_\eta \|x_0 - x_1\|^2 + {\cal L}_\eta (\|z_0 - x_0\| + \|z_1-x_1\|) \|x_0 - x_1\|\\
&\le& {\cal L}_\eta \|x_0 - x_1\|^2 + 2 {\cal L}_\eta 
\left(\frac{2 \eta {\cal D}_v}{{\cal L}_\eta} \right)^\frac{1}{2} \|x_0 - x_1\| \\
&=& 
\frac{\|A\|^2}{\sigma_v \eta} \|x_0 - x_1\|^2 + 2 
\left( \frac{2 \|A\|^2 {\cal D}_v}{\sigma_v}\right)^\frac{1}{2} \|x_0 - x_1\|,
\eeqas
where the last inequality and equality follow from \eqnok{bnd_Zx} and \eqnok{new_ls},
respectively.
Combining the above two relations, we have
\[
F(x_0) - \left[F(x_1) + \langle F'(x_1), x_0 - x_1 \rangle\right]
\le \frac{\|A\|^2}{\sigma_v \eta} \|x_0 - x_1\|^2 + 
2 \left( \frac{2 \|A\|^2 {\cal D}_v}{\sigma_v}\right)^\frac{1}{2} \|x_0 - x_1\|.
\]
The result now follows by tending $\eta$ to $+\infty$ in the above relation.
\endproof

\bibliographystyle{plain}
\bibliography{../glan-bib}

\newcommand{\noopsort}[1]{} \newcommand{\printfirst}[2]{#1}
  \newcommand{\singleletter}[1]{#1} \newcommand{\switchargs}[2]{#2#1}
\begin{thebibliography}{10}

\bibitem{Ahmed06-1}
S.~Ahmed.
\newblock Smooth minimization of two-stage stochastic linear programs.
\newblock Manuscript, Georgia Institute of Technology, 2006.

\bibitem{AuTe06-1}
A.~Auslender and M.~Teboulle.
\newblock Interior gradient and proximal methods for convex and conic
  optimization.
\newblock {\em SIAM Journal on Optimization}, 16:697--725, 2006.

\bibitem{BeBoCa09-1}
S.~Becker, J.~Bobin, and E.~Cand\`{e}s.
\newblock {NESTA}: A fast and accurate first-order method for sparse recovery.
\newblock Manuscript, California Institute of Technology, 2009.

\bibitem{BenNem00}
A.~Ben-Tal and A.~S. Nemirovski.
\newblock {\em Lectures on Modern Convex Optimization: Analysis, Algorithms,
  Engineering Applications}.
\newblock MPS-SIAM Series on Optimization. SIAM, Philadelphia, 2000.

\bibitem{BenNem05-1}
A.~Ben-Tal and A.~S. Nemirovski.
\newblock Non-{E}uclidean restricted memory level method for large-scale convex
  optimization.
\newblock {\em Mathematical Programming}, 102:407--456, 2005.

\bibitem{CheGol59}
E.W. Chenny and A.A. Goldstein.
\newblock Newton's methods for convex programming and tchebytcheff
  approximation.
\newblock {\em Numerische Mathematik}, 1:253--268, 1959.

\bibitem{dbg08-1}
A.~d'Aspremont, O.~Banerjee, and L.~El Ghaoui.
\newblock First-order methods for sparse covariance selection.
\newblock {\em SIAM Journal on Matrix Analysis and its Applications},
  30:56--66, 2008.

\bibitem{GhaLan10-1b}
S.~Ghadimi and G.~Lan.
\newblock Optimal stochastic approximation algorithms for strongly convex
  stochastic composite optimization, {II}: shrinking procedures and optimal
  algorithms.
\newblock Technical report, 2010.
\newblock {\it SIAM Journal on Optimization} (under second-round review).

\bibitem{GhaLan12-2a}
S.~Ghadimi and G.~Lan.
\newblock Optimal stochastic approximation algorithms for strongly convex
  stochastic composite optimization, {I}: a generic algorithmic framework.
\newblock {\em SIAM Journal on Optimization}, 22:1469--1492, 2012.

\bibitem{HeRe00}
C.~Helmberg and F.~Rendl.
\newblock A spectral bundle method for semidefinite programming.
\newblock {\em {SIAM} Journal on Optimization}, 10:673--696, 2000.

\bibitem{Hoffman52}
A.~Hoffman.
\newblock On approximate solutions of systems of linear inequalities.
\newblock {\em Journal of Research of the National Bureau of Standards, Section
  B. Math. Sci.}, 49:263�265, 1952.

\bibitem{jnt08}
A.~Juditsky, A.~S. Nemirovski, and C.~Tauvel.
\newblock Solving variational inequalities with stochastic mirror-prox
  algorithm.
\newblock Manuscript, Georgia Institute of Technology, Atlanta, GA, 2008.

\bibitem{Kelley60}
J.E. Kelley.
\newblock The cutting plane method for solving convex programs.
\newblock {\em Journal of the SIAM}, 8:703--712, 1960.

\bibitem{Kiw83-1}
K.C. Kiwiel.
\newblock An aggregate subgradient method for nonsmooth convex minimization.
\newblock {\em Mathematical Programming}, 27:320--341, 1983.

\bibitem{Kiw90-1}
K.C. Kiwiel.
\newblock Proximity control in bundle methods for convex nondifferentiable
  minimization.
\newblock {\em Mathematical Programming}, 46:105--122, 1990.

\bibitem{Kiw95-1}
K.C. Kiwiel.
\newblock Proximal level bundle method for convex nondifferentable
  optimization, saddle point problems and variational inequalities.
\newblock {\em Mathematical Programming, Series B}, 69:89--109, 1995.

\bibitem{Lan10-3}
G.~Lan.
\newblock An optimal method for stochastic composite optimization.
\newblock {\em Mathematical Programming}, 133(1):365--397, 2012.

\bibitem{LaLuMo11-1}
G.~Lan, Z.~Lu, and R.~D.~C. Monteiro.
\newblock Primal-dual first-order methods with {${\cal O}(1/\epsilon)$}
  iteration-complexity for cone programming.
\newblock {\em Mathematical Programming}, 126:1--29, 2011.

\bibitem{lns11}
G.~Lan, A.~S. Nemirovski, and A.~Shapiro.
\newblock Validation analysis of mirror descent stochastic approximation
  method.
\newblock {\em Mathematical Programming}, 134:425--458, 2012.

\bibitem{Lem75}
C.~Lemar\'{e}chal.
\newblock An extension of davidon methods to non-differentiable problems.
\newblock {\em Mathematical Programming Study}, 3:95--109, 1975.

\bibitem{LNN}
C.~Lemar\'{e}chal, A.~S. Nemirovski, and Y.~E. Nesterov.
\newblock New variants of bundle methods.
\newblock {\em Mathematical Programming}, 69:111--148, 1995.

\bibitem{LewWri09-1}
A.S. Lewis and S.J. Wright.
\newblock A proximal method for composite minimization.
\newblock Manuscript, Cornell University, Ithaca, NY, 2009.

\bibitem{lsw}
J.~Linderoth, A.~Shapiro, and S.~Wright.
\newblock The empirical behavior of sampling methods for stochastic
  programming.
\newblock {\em Annals of Operations Research}, 142:215--241, 2006.

\bibitem{linWri03-1}
J.~Linderoth and S.~Wright.
\newblock Decomposition algorithms for stochastic programming on a
  computational grid.
\newblock {\em Computational Optimization and Applications}, 24:207--250, 2003.

\bibitem{Lu09-1}
Z.~Lu.
\newblock Smooth optimization approach for sparse covariance selection.
\newblock {\em SIAM Journal on Optimization}, 19:1807--1827, 2009.

\bibitem{mak:99}
W.~K. Mak, D.P. Morton, and R.K. Wood.
\newblock Monte carlo bounding techniques for determining solution quality in
  stochastic programs.
\newblock {\em Operations Research Letters}, 24:47--56, 1999.

\bibitem{Mosek}
Mosek.
\newblock The mosek optimization toolbox for matlab manual. version 6.0
  (revision 93).
\newblock {\tt http://www.mosek.com}.

\bibitem{Nem94}
A.~S. Nemirovski.
\newblock {\em Efficient methods in convex programming}.
\newblock Lecture notes, Technion, 1994.

\bibitem{Nem05-1}
A.~S. Nemirovski.
\newblock Prox-method with rate of convergence $o(1/t)$ for variational
  inequalities with lipschitz continuous monotone operators and smooth
  convex-concave saddle point problems.
\newblock {\em SIAM Journal on Optimization}, 15:229--251, 2005.

\bibitem{NJLS09-1}
A.~S. Nemirovski, A.~Juditsky, G.~Lan, and A.~Shapiro.
\newblock Robust stochastic approximation approach to stochastic programming.
\newblock {\em SIAM Journal on Optimization}, 19:1574--1609, 2009.

\bibitem{nemyud:83}
A.~S. Nemirovski and D.~Yudin.
\newblock {\em Problem complexity and method efficiency in optimization}.
\newblock Wiley-Interscience Series in Discrete Mathematics. John Wiley, XV,
  1983.

\bibitem{NemNes85-1}
A.~S. Nemirovskii and Y.~E. Nesterov.
\newblock Optimal methods for smooth convex minimization.
\newblock {\em Zh. Vichisl. Mat. Fiz. (In Russian)}, 25:356--369, 1985.

\bibitem{Nest83-1}
Y.~E. Nesterov.
\newblock A method for unconstrained convex minimization problem with the rate
  of convergence {$O(1/k^2)$}.
\newblock {\em Doklady AN SSSR}, 269:543--547, 1983.

\bibitem{Nest88-1}
Y.~E. Nesterov.
\newblock On an approach to the construction of optimal methods of minimization
  of smooth convex functions.
\newblock {\em Ekonomo. i. Mat. Metody}, 24:509--517, 1988.

\bibitem{Nest89}
Y.~E. Nesterov.
\newblock {\em Efficient methods in nonlinear programming}.
\newblock Radio i Sviaz, Moscow, 1989.

\bibitem{Nest04}
Y.~E. Nesterov.
\newblock {\em Introductory Lectures on Convex Optimization: a basic course}.
\newblock Kluwer Academic Publishers, Massachusetts, 2004.

\bibitem{Nest05-2}
Y.~E. Nesterov.
\newblock Excessive gap technique in nonsmooth convex minimization.
\newblock {\em SIAM Journal on Optimization}, 16:235--249, 2005.

\bibitem{Nest05-1}
Y.~E. Nesterov.
\newblock Smooth minimization of nonsmooth functions.
\newblock {\em Mathematical Programming}, 103:127--152, 2005.

\bibitem{Nest07-1}
Y.~E. Nesterov.
\newblock Gradient methods for minimizing composite objective functions.
\newblock Technical report, Center for Operations Research and Econometrics
  (CORE), Catholic University of Louvain, September 2007.

\bibitem{Nest06-1}
Y.~E. Nesterov.
\newblock Smoothing technique and its applications in semidefinite
  optimization.
\newblock {\em Mathematical Programming}, 110:245--259, 2007.

\bibitem{DeGlNe10-1}
O.Devolder, F.Glineur, and Y.~E. Nesterov.
\newblock First-order methods of smooth convex optimization with inexact
  oracle.
\newblock Manuscript, CORE, Universit{\'e} catholique de Louvain,
  Louvain-la-Neuve, Belgium, December 2010.

\bibitem{OliSagSch11-1}
W.~Oliveira, C.~Sagastiz\'{a}bal, and S.~Scheimberg.
\newblock Inexact bundle methods for two-stage stochastic programming.
\newblock {\em SIAM Journal on Optimization}, 21(2):517--544, 2011.

\bibitem{pena08-1}
J.~{Pe\~{n}a}.
\newblock Nash equilibria computation via smoothing techniques.
\newblock {\em Optima}, 78:12--13, 2008.

\bibitem{Rich07-1}
P.~Richt\'{a}rik.
\newblock Approximate level method for nonsmooth convex minimization.
\newblock {\em Journal of Optimization Theory and Applications}.

\bibitem{Rusz06}
A.~Ruszczy\'{n}ski.
\newblock {\em Nonlinear Optimization}.
\newblock Princeton University Press, first edition, 2006.

\bibitem{Saga11-1}
C.~Sagastiz\'{a}bal.
\newblock Nonsmooth optimization: Thinking outside of the black box.
\newblock {\em SIAG/OPT Views-and-News}, pages 1--9, 2011.

\bibitem{sdc94}
S.~Sen, R.D. Doverspike, and S.~Cosares.
\newblock Network planning with random demand.
\newblock {\em Telecommunication Systems}, 3:11--30, 1994.

\bibitem{tseng08-1}
P.~Tseng.
\newblock On accelerated proximal gradient methods for convex-concave
  optimization.
\newblock Manuscript, University of Washington, Seattle, May 2008.

\end{thebibliography}
\end{document}